\newcommand{\alg}{\mathbf}
\newcommand{\class}{\mathsf}
\newcommand{\logic}{\mathrm}
\newcommand{\lang}{\mathcal}
\newcommand{\set}[2]{\{ #1 \mid #2 \}}
\newcommand{\pair}[2]{\langle #1, #2 \rangle}
\newcommand{\tuple}{\overline}
\newcommand{\iso}{\cong}
\newcommand{\equals}{\thickapprox}
\newcommand{\assign}{\mathrel{:=}}
\newcommand{\boldTheta}{\mathbf{\Theta}}
\newcommand{\boldPsi}{\mathbf{\Psi}}
\newcommand\onehalf{\nicefrac12}
\newcommand{\Kthree}{\alg{K}_{\alg{3}}}
\newcommand{\DMfour}{\alg{DM}_{\alg{4}}}
\newcommand{\WKthree}{\alg{WK}_{\alg{3}}}
\DeclareMathOperator{\Alg}{Alg}
\DeclareMathOperator{\Cg}{Cg}
\DeclareMathOperator{\Fg}{Fg}
\DeclareMathOperator{\Fi}{Fi}
\newcommand{\Fm}{\alg{Fm}}
\newcommand{\Leibniz}{\Omega}
\newcommand{\Tarski}{\widetilde{\Omega}}
\newcommand{\ddtset}{I}
\newcommand{\ddtfamily}{\Phi}
\newcommand{\ddtformula}{\alpha}
\DeclareMathOperator{\At}{At}
\DeclareMathOperator{\Cn}{Cn}
\DeclareMathOperator{\Con}{Con}
\DeclareMathOperator{\Ker}{Ker}
\DeclareMathOperator{\Var}{Var}
\newcommand{\AlgH}{\mathbb{H}}
\newcommand{\AlgI}{\mathbb{I}}
\newcommand{\AlgS}{\mathbb{S}}
\newcommand{\AlgP}{\mathbb{P}}
\newcommand{\AlgPU}{\mathbb{P}_{\mathrm{U}}}
\newcommand{\CL}{\logic{CL}}
\newcommand{\KL}{\logic{KL}}
\newcommand{\LP}{\logic{LP}}
\newcommand{\PWK}{\logic{PWK}}
\newcommand{\ETL}{\logic{ETL}}
\newcommand{\IL}{\logic{IL}}
\newcommand{\LocK}{\logic{K}^{\ell}}
\newcommand{\GlobK}{\logic{K}^{\mathrm{g}}}
\DeclareSymbolFont{forpolishl}{T1}{cmr}{m}{n}
\DeclareMathSymbol{\mathrmL}{0}{forpolishl}{'212}
\newcommand{\Luk}{\mathrmL}
\newcommand{\ILbc}{\IL_{\circ\bullet}}
\newcommand{\ESL}{\logic{L}_{\rightarrow}}
\newcommand{\IdL}{\logic{Id}}
\newcommand{\MV}{\class{MV}}
\newcommand{\HA}{\class{HA}}
\newcommand{\BAO}{\class{BAO}}
\newcommand{\KA}{\class{KA}}
\newcommand{\GIB}{\class{GIB}}
\newtheorem{theorem}{Theorem}[section]
\newtheorem{lemma}[theorem]{Lemma}
\newtheorem{corollary}[theorem]{Corollary}
\newtheorem{fact}[theorem]{Fact}
\newtheorem{example}[theorem]{Example}
\theoremstyle{definition}
\newtheorem{definition}[theorem]{Definition}
\title{Equational definitions of logical filters}
\author{Michele Pra Baldi}
\address{Universit\`{a} di Padova, Dipartimento di Filosofia, Sociologia, Pedagogia e Psicologia Applicata - FISPPA}
\email{michele.prabaldi@unipd.it}
\author{Adam P\v{r}enosil}
\address{Universitat de Barcelona, Departament de Filosofia}
\email{adam.prenosil@gmail.com}
\thanks{The work of the second author was funded by the grant 2021 BP 00212 of the grant agency AGAUR of the Generalitat de Catalunya.}
\begin{document}

\begin{abstract}
  A finitary propositional logic can be given an algebraic reading in two different ways: by translating formulas into equations and logical rules into quasi-equations, or by translating logical rules directly into equations. The former type of algebraic interpretation has been extensively studied and underlies the theory of algebraization. Here we shall develop a systematic theory of the latter type of algebraic interpretation. More precisely, we consider a semantic form of this property which we call the equational definability of compact filters (EDCF). Paralleling the well-studied hierarchy of variants of the deduction--detachment theorem (DDT), this property also comes in local, parametrized, and parametrized local variants. Our main results give a semantic characterization of each of these variants of the EDCF in a spirit similar to the existing characterizations of the DDT. While the EDCF hierarchy and the DDT hierarchy coincide for algebraizable logics, part of the interest of the EDCF stems from the fact it is often enjoyed even by logics which are not well-behaved in terms of other existing classifications in algebraic logic.
\end{abstract}

\maketitle

\section{Introduction}

  A central concern of algebraic logic, particularly of its subfield known as abstract algebraic logic (AAL), is to systematically relate the consequence relation of a propositional logic $\logic{L}$ with the equational consequence relation of some class of algebras~\cite{font16}. (Throughout the paper, by a logic we mean a \emph{finitary} structural consequence relation between a set of premises and a single conclusion.) This class of algebras is typically the \emph{algebraic counterpart} of $\logic{L}$: a class of algebras $\Alg \logic{L}$ canonically associated to any logic. For example, the consequence relation $\IL$ of intuitionistic logic is linked to the equational consequence relation of the class $\HA$ of Heyting algebras by the following equivalence:
\begin{align*}
  \gamma_{1}, \dots, \gamma_{n} \vdash_{\IL} \varphi \iff \gamma_{1} \equals 1, \dots, \gamma_{n} \equals 1 \vDash_{\HA} \varphi \equals 1.
\end{align*}
  Here the right-hand side of the equivalence states that the corresponding quasi-equation holds in all Heyting algebras, i.e.\ instead we may equally well write
\begin{align*}
  \HA \vDash (\gamma_{1} \equals 1 ~ \& ~ \dots ~ \& ~ \gamma_{n} \equals 1) \implies \varphi \equals 1.
\end{align*}
  Such an equivalence is called an \emph{equational completeness theorem}~\cite{moraschini22}. Indeed, this equivalence is a \emph{standard} equational completeness theorem: the class of algebras on the right-hand side ($\HA$) is the algebraic counterpart of $\IL$.\footnote{The recent work of Moraschini~\cite{moraschini22} showed that almost all logics admit \emph{some} equational completeness theorem, but conversely also that, absent any constraints on the class of algebras, such completeness theorems are of little help in the study of a given logic.}

  However, this not the only way to provide the consequence relation of intuitionistic logic with an algebraic reading. We can also translate intuitionistic consequence into the equational theory of Heyting algebras as follows:
\begin{align*}
  \gamma_{1}, \dots, \gamma_{n} \vdash_{\IL} \varphi \iff \HA \vDash \gamma_{1} \wedge \dots \wedge \gamma_{n} \leq \varphi.
\end{align*}
  Rather than individual formulas getting translated into equations, here the entire consequence $\gamma_{1}, \dots, \gamma_{n} \vdash \varphi$ is translated into an equation. Accordingly, we shall call this kind of equivalence a \emph{(global) equational definition of consequence}, and we say that a logic with such an equational definition of consequence \emph{has the (global) EDC}. In particular, the above equivalence is a \emph{standard} equational definition of consequence, again in the sense that the right-hand side involves the algebraic counterpart of $\IL$.

  Both of these algebraic readings of logical consequence are available for intuitionistic logic, where it is in fact easy to derive one from the other. They are more generally, they are available given any class of algebras $\class{K}$ with a unital meet semilattice reduct. In that case, the logic defined by the equational completeness theorem of the above form is called the \emph{assertional logic} of $\class{K}$ and the logic defined by the equational definition of consequence of the above form is called the \emph{logic of order} of $\class{K}$. Beyond the case of intuitionistic logic, the assertional logic and the logic of order are typically different.

  Consider\footnote{See~\cite[Section~1.5]{blackburn+derijke+venema01} for the definition of these consequence relations and~\cite[p.~476]{font16} for a summary of the basic properties of these logics from the point of view of AAL, including those stated here without proof.} for instance the local and global variants of basic modal logic, $\LocK$ and $\GlobK$. These are two distinct consequence relations: $x \vdash_{\GlobK} \Box x$ but $x \nvdash_{\LocK} \Box x$. The algebraic counterpart of both of these logics is the variety (equational class) $\BAO$ of Boolean algebras with an operator (a unary operation $\Box$ which satisfies the equations $\Box (x \wedge y) \equals \Box x \wedge \Box y$ and $\Box 1 \equals 1$). The logic $\LocK$ is the logic of order of $\BAO$, and a such has a standard global EDC, namely:
\begin{align*}
  \gamma_{1}, \dots, \gamma_{n} \vdash_{\LocK} \varphi \iff \BAO \vDash \gamma_{1} \wedge \dots \wedge \gamma_{n} \leq \varphi.
\end{align*}
  In contrast, $\LocK$ does not have any standard equational completeness theorem~\cite[Corollary~9.7]{moraschini22}. The opposite situation holds for $\GlobK$. This is the assertional logic of $\BAO$, and as such it has a standard equational completeness theorem of the following form:
\begin{align*}
  \gamma_{1}, \dots, \gamma_{n} \vdash_{\GlobK} \varphi \iff \gamma_{1} \equals 1, \dots, \gamma_{n} \equals 1 \vDash_{\BAO} \varphi \equals 1,
\end{align*}
  but it does not have a standard global EDC (Example~\ref{example: global k does not have edc}).

  Although the logic $\GlobK$ does not admit a standard global EDC, it does admit the following more general kind of standard EDC:
\begin{align*}
  \gamma_{1}, \dots, \gamma_{n} \vdash_{\GlobK} \varphi \iff \BAO \vDash \Box_{k} (\gamma_{1} \wedge \dots \wedge \gamma_{n}) \leq \varphi \text{ for some } k \in \omega,
\end{align*}
  where we use the notation $\Box_{0} x \assign x$ and $\Box_{i+1} x \assign x \wedge \Box \Box_{i} x$. Similarly, the infinite-valued {\L}ukasiewicz logic $\Luk$, whose algebraic counterpart is the variety $\MV$ of MV-algebras, does not admit a standard global EDC (Example~\ref{example: luk does not have edc}), but it admits the following more general kind of standard EDC:
\begin{align*}
  \gamma_{1}, \dots, \gamma_{n} \vdash_{\Luk} \varphi \iff \MV \vDash (\gamma_{1} \wedge \dots \wedge \gamma_{n})^{k} \leq \varphi \text{ for some } k \in \omega,
\end{align*}
  where $\varphi^{0} \assign 1$ and $\varphi^{i+1} \assign \varphi \odot \varphi^{i}$. Such an equivalence, where the right-hand side states that at least one equation in a certain family of equations holds, will be called a \emph{local} equational definition of consequence.

  Other examples show that even if both an equational completeness theorem and an equational definition of consequence are available for a given logic, they may provide two rather different connections to the same class of algebras. Consider for instance the strong three-valued Kleene logic $\KL$ and the three-valued Logic of Paradox $\LP$.\footnote{See \cite[Sections 3 and 5]{albuquerque+prenosil+rivieccio17} and \cite[Section 4]{prenosil23} for a discussion of these logics from the point of view of AAL.} These logics are typically studied in the signature $\{ \wedge, \vee, \neg \}$, but for the sake of simplicity let us throughout this paper add the constants $0$ and $1$ to their signature. The algebraic counterpart of both of these logics is the class $\KA$ of Kleene algebras, i.e.\ bounded distributive lattices with an order-inverting involution $\neg$ which satisfy the inequality $x \wedge \neg x \leq y \vee \neg y$. The logic $\KL$ has a standard equational completeness theorem of the form
\begin{align*}
  \gamma_{1}, \dots, \gamma_{n} \vdash_{\KL} \varphi & \iff \gamma_{1} \equals 1, \dots, \gamma_{n} \equals 1 \vDash_{\KA} \varphi \equals 1,
\end{align*}
  while $\LP$ has a standard equational completeness theorem of the form
\begin{align*}
  \gamma_{1}, \dots, \gamma_{n} \vdash_{\LP} \varphi & \iff \neg \gamma_{1} \leq \gamma_{1}, \dots, \neg \gamma_{n} \leq \gamma_{n} \vDash_{\KA} \neg \varphi \leq \varphi.
\end{align*}
  These logics also have standard equational definitions of consequence:
\begin{align*}
  \gamma_{1}, \dots, \gamma_{n} \vdash_{\KL} \varphi & \iff \KA \vDash \gamma \leq \neg \gamma \vee \varphi \text{ for } \gamma \assign \gamma_{1} \wedge \dots \wedge \gamma_{n}, \\
  \gamma_{1}, \dots, \gamma_{n} \vdash_{\LP} \varphi & \iff \KA \vDash \gamma \wedge \neg \varphi \leq \varphi \text{ for } \gamma \assign \gamma_{1} \wedge \dots \wedge \gamma_{n}.
\end{align*}
  In other words, the consequence relations of $\KL$ and $\LP$ may be identified either with a fragment of the quasi-equational theory of Kleene algebras through the equational completeness theorem or with a (different) fragment of the equational theory of Kleene algebras through the equational definition of consequence.

  The theory of equational completeness theorems forms a core part of AAL. Indeed, one can say that AAL was developed as an attempt to provide an appropriate framework for studying such theorems. In contrast, no corresponding systematic theory of the equational definability of consequence exists, not even in a rudimentary form. Only the special case of logics of order has been studied in detail~\cite{jansana06}. The goal of this paper is to develop the foundations of such a theory. Our main concern will be to establish the semantic correlates of the equational definability properties of the type considered above.

  To be more precise, rather than considering the equational definability of consequence, we shall study a stronger form of this property: the equational definability of \emph{filter generation}. Before we can state the definition of this property, we will need to briefly review some basic concepts of AAL.

  Just as an \emph{$\logic{L}$-theory} is a set of formulas closed under the consequences valid in a logic $\logic{L}$, an \emph{$\logic{L}$-filter} on an algebra $\alg{A}$ is a subset $F$ of $\alg{A}$ which is closed, in a natural sense, under the consequences valid in $\logic{L}$. For example, if $\alg{A}$ is a Boolean algebra with an operator, then the filters of $\LocK$ on $\alg{A}$ are the lattice filters, while the filters of $\GlobK$ on $\alg{A}$ are the lattice filters $F$ closed under the necessitation rule $x \vdash \Box x$, which is interpreted in $\alg{A}$ as $a \in F \implies \Box a \in F$. $\logic{L}$-theories are precisely the $\logic{L}$-filters on the algebra of formulas (the absolutely free algebra) in the signature of $\logic{L}$ over a given infinite set of variables. In other words, the notion of an $\logic{L}$-filter extends that of an $\logic{L}$-theory to arbitrary algebras.

  The deductive closure $\Cn_{\logic{L}} (\gamma_{1}, \dots, \gamma_{n})$ of a finite set of formulas $\{ \gamma_{1}, \dots, \gamma_{n} \}$, which is the smallest $\logic{L}$-theory containing $\{ \gamma_{1}, \dots, \gamma_{n} \}$, then generalizes to the $\logic{L}$-filter $\Fg^{\alg{A}}_{\logic{L}}(a_{1}, \dots, a_{n})$ generated by $\{ a_{1}, \dots, a_{n} \} \subseteq \alg{A}$, which is the smallest $\logic{L}$-filter on $\alg{A}$ containing $\{ a_{1}, \dots, a_{n} \}$. Instead of considering the equational definability of the relation $\varphi \in \Cn_{\logic{L}} (\gamma_{1}, \dots, \gamma_{n})$, which is merely the relation $\gamma_{1}, \dots, \gamma_{n} \vdash_{\logic{L}} \varphi$ under a different notation, we can now ask whether the relation $b \in \Fg^{\alg{A}}_{\logic{L}} (a_{1}, \dots, a_{n})$ is equationally definable for each $\alg{A} \in \Alg \logic{L}$. We call this property the \emph{equational definability of compact filters} (EDCF).\footnote{The name comes from the fact that finitely generated $\logic{L}$-filters, i.e.\ $\logic{L}$-filters of the form $\Fg^{\alg{A}}_{\logic{L}}(a_{1}, \dots, a_{n})$, are exactly the compact elements of the lattice of all $\logic{L}$-filters on $\alg{A}$. We shall use EDCF as an abbreviation for either \emph{equational definability of compact filters} or \emph{equational definition of compact filters}, depending on the context.} 

  The equational definitions of consequence in the examples that we have seen so far all extend straightforwardly to equational definitions of compact filters. Local modal logic $\LocK$ enjoys the following form of the (global) EDCF: for each $\alg{A} \in \BAO = \Alg \LocK$ and each $a_{1}, \dots, a_{n}, b \in \alg{A}$
\begin{align*}
  b \in \Fg^{\alg{A}}_{\LocK} (a_{1}, \dots, a_{n}) \iff a_{1} \wedge \dots \wedge a_{n} \leq b.
\end{align*}
  The logics $\KL$ and $\LP$ also have the (global) EDCF: for each $\alg{A} \in \KA = \Alg \KL = \Alg \LP$ and each $a_{1}, \dots, a_{n}, b \in \alg{A}$
\begin{align*}
  b \in \Fg^{\alg{A}}_{\KL} (a_{1}, \dots, a_{n}) \iff a \leq \neg a \vee b \text{ for } a \assign a_{1} \wedge \dots \wedge a_{n}, \\
  b \in \Fg^{\alg{A}}_{\LP} (a_{1}, \dots, a_{n}) \iff a \wedge \neg b \leq b \text{ for } a \assign a_{1} \wedge \dots \wedge a_{n}.
\end{align*}
  The local EDC of global modal logic $\GlobK$ extends to the following \emph{local} form of the EDCF:  for each $\alg{A} \in \BAO = \Alg \GlobK$ and each $a_{1}, \dots, a_{n}, b \in \alg{A}$
\begin{align*}
  b \in \Fg^{\alg{A}}_{\GlobK} (a_{1}, \dots, a_{n}) \iff \Box_{k}(a_{1} \wedge \dots \wedge a_{n}) \leq b \text{ for some } k \in \omega.
\end{align*}
  Similarly, compact filter generation in infinite-valued {\L}ukasiewicz logic $\Luk$ is described by the following local form of the EDCF: for each $\alg{A} \in \MV = \Alg \Luk$ and each $a_{1}, \dots, a_{n}, b \in \alg{A}$
\begin{align*}
  b \in \Fg^{\alg{A}}_{\Luk} (a_{1}, \dots, a_{n}) \iff (a_{1} \wedge \dots \wedge a_{n})^{k} \leq b \text{ for some } k \in \omega,
\end{align*}
  or equivalently
\begin{align*}
  b \in \Fg^{\alg{A}}_{\Luk} (a_{1}, \dots, a_{n}) \iff (a_{1} \odot \dots \odot a_{n})^{k} \leq b \text{ for some } k \in \omega.
\end{align*}
  While the EDCF is a stronger property than the standard EDC, we have not found any examples of logics with the standard EDC but not the EDCF ``in the wild'' and had to resort to constructing an artificial example in order to separate the two properties (see Example~\ref{example: edc without edcf}).

  The main contribution of this paper is to provide equivalent characterizations of the local and global EDCF (Theorems~\ref{thm: ledcf} and \ref{thm: baby edcf}) and of the parametrized forms of the local and global EDCF (Theorems~\ref{thm: pledcf} and \ref{thm: baby pedcf}) which can readily be used to prove that a logic does not admit any EDCF of the appropriate form. These theorems are very much in the spirit of the semantic characterizations of the corresponding forms of the so-called Deduction--Detachment Theorem, or DDT (see~\cite{blok+pigozzi01} and~\cite[Chapter~2]{czelakowski01} for a comprehensive account of the DDT). Indeed, there is a close connection between the established hierarchy of local and global forms of the DDT and the new hierarchy of local and global forms of the EDCF (with or without parameters).

  The DDT hierarchy and the EDCF hierarchy coincide for algebraizable logics whose algebraic counterpart is a quasivariety (Section~\ref{sec: edcf and ddt}). Such logics are the best-behaved among the major class of logics studied in AAL. Beyond the exclusive club of algebraizable logics, the two hierarchies diverge.

  The base level (i.e.\ the parametrized local level) of the DDT hierarchy consists of the so-called protoalgebraic logics, while the corresponding level of the EDCF hierarchy contains all logics. In full generality the two hierarchies are independent of each other except at this base level. However, in practice it only takes a very modest assumption to derive a given form of the EDCF from the corresponding form of the DDT (Fact~\ref{fact: ddt implies edcf}). Namely, it suffices to assume that the smallest filter on any algebra has an equational definition of the appropriate form (typically $1 \wedge x \equals 1$ or $1 \vee x \equals x$ or $1 \equals x$). That said, a logic which has the DDT but not the EDCF can be constructed (Example~\ref{example: ddt but not edc}).

  Part of the interest of equational definitions of compact filters stems from the fact that many logics which are not particularly well-behaved when viewed through the lens of other existing categories of abstract algebraic logic (on account of lacking a well-behaved implication connective) nonetheless have an EDCF. For instance, the logic $\KL$ is almost maximally ill-behaved: it does not have a standard equational completeness theorem (hence it is not truth-equational), it is not proto\-algebraic (hence it is not equivalential, much less algebraizable, and it does not have even a weak form of the deduction theorem), and it is not selfextensional. The connection between $\KL$ and its algebraic counterpart should therefore be very weak. Yet the logic has a global EDCF, unlike many logics which are much better behaved with respect to the existing categories of abstract algebraic logic, such as the global modal logic $\GlobK$, infinite-valued {\L}ukasiewicz logic, or the Full Lambek calculus.

  In addition, we consider the question of when the EDCF can be finitized in the sense of reducing to finitely many equations in the global case and to a finite family of finitely many equations in the local case (Theorem~\ref{thm: finitary edcf}). The finitized forms of our characterizations theorems, at least for the parametrized and for the local EDCF, are in fact straightforward consequences of the model-theoretic results of Campercholi and Vaggione~\cite{campercholi+vaggione16} concerning the definability of relations by first-order formulas of various types. However, note that many prominent examples of logics (such as $\GlobK$ and $\Luk$) have a non-finitizable local EDCF.

  The paper is structured as follows. In Section~\ref{sec: preliminaries} we review the basic notions of AAL which we will need to rely on throughout the paper. In Sections~\ref{sec: pledcf and pedcf} and~\ref{sec: ledcf and edcf} we provide a number of equivalent conditions for the local and global EDCF in their parametrized and parameter-free forms, respectively. In Section~\ref{sec: finitary edcf} we then discuss the finitary forms of the EDCF. Section~\ref{sec: leibniz} rreviews some basic classes of logics which arise in AAL. It provides the required preliminaries for Section~\ref{sec: edcf and ddt}, which discusses the relationship between the corresponding forms of the EDCF and the DDT, proving in particular that for algebraizable logics they coincide. Finally, in Section~\ref{sec: edc} we consider the equational definability of consequence, using $\GlobK$ and $\Luk$ as examples on which we show how to prove that a logic fails to have an equationally definable consequence relation.

\section{Preliminaries}
\label{sec: preliminaries}

  This preliminary section contains a review of the basic notions of AAL which will be used throughout the paper. The reader seeking a more detailed and leisurely introduction to AAL is advised to consult the textbook~\cite{font16}. 

  For the purposes of this paper, a \emph{logic} is a \emph{finitary} structural consequence relation in some given set of variables and a given algebraic signature. In more detail, we fix an infinite set of variables and a signature consisting of a set of function symbols with some specified finite arities. By algebras we shall always mean algebras in this given signature. The \emph{formula algebra} is the absolutely free algebra $\Fm$ generated in this algebraic signature by the given set of variables.

  A \emph{(finitary) rule} is a pair consisting of a finite set of formulas (the \emph{premises} of the ruel) and a single formula (the \emph{conclusion} of the rule), written as $\gamma_{1}, \dots, \gamma_{n} \vdash \varphi$, A logic $\logic{L}$ is then a set of finitary rules, where $\gamma_{1}, \dots, \gamma_{n} \vdash \varphi \in \logic{L}$ will be written as $\gamma_{1}, \dots, \gamma_{n} \vdash_{\logic{L}} \varphi$, such that
\begin{enumerate}[(i)]
\item $\gamma_{1}, \dots, \gamma_{n} \vdash_{\logic{L}} \varphi$ holds for $\varphi \in \{ \gamma_{1}, \dots, \gamma_{n} \}$,
\item if $\gamma_{1}, \dots, \gamma_{n} \vdash_{\logic{L}} \psi$ and $\psi, \delta_{1}, \dots, \delta_{k} \vdash_{\logic{L}} \varphi$, then $\gamma_{1}, \dots, \gamma_{n}, \delta_{1}, \dots, \delta_{k} \vdash_{\logic{L}} \varphi$,
\item if $\gamma_{1}, \dots, \gamma_{n} \vdash_{\logic{L}} \varphi$, then $\sigma(\gamma_{1}), \dots, \sigma(\gamma_{n}) \vdash_{\logic{L}} \sigma(\varphi)$ for each substitution $\sigma$.
\end{enumerate}
  The rules which belong to $\logic{L}$ are said to be \emph{valid} in $\logic{L}$ or to \emph{hold} in $\logic{L}$.

  Throughout the paper, $\logic{L}$ will denote a logic. A logic $\logic{L}$ is \emph{trivial} if it validates every rule. A \emph{theory} of $\logic{L}$, or an \emph{$\logic{L}$-theory}, is a set of formulas $T$ closed under $\logic{L}$-consequence: if $\gamma_{1}, \dots, \gamma_{n} \vdash_{\logic{L}} \varphi$ and $\gamma_{1}, \dots, \gamma_{n} \in T$, then $\varphi \in T$. We write $\Gamma \vdash_{\logic{L}} \Delta$ as shorthand for the claim that $\Gamma \vdash_{\logic{L}} \delta$ for each $\delta \in \Delta$.

  A \emph{(logical) matrix} is a structure $\pair{\alg{A}}{F}$ consisting of an algebra $\alg{A}$ and a subset $F \subseteq \alg{A}$. A finitary rule $\gamma_{1}, \dots, \gamma_{n} \vdash \varphi$ is said to be \emph{valid} in a matrix $\pair{\alg{A}}{F}$ if $h(\gamma_{1}), \dots, h(\gamma_{n}) \in F$ imply $h(\varphi) \in F$ for each homomorphism $h\colon \Fm \to \alg{A}$. An \emph{$\logic{L}$-filter} on an algebra $\alg{A}$ is a set $F \subseteq \alg{A}$ such that each rule valid in $\logic{L}$ is also valid in $\pair{\alg{A}}{F}$. In that case, we also say that $\pair{\alg{A}}{F}$ is a \emph{model} of $\logic{L}$. Informally speaking, an $\logic{L}$-filter on $\alg{A}$ is a subset of $\alg{A}$ closed under all rules valid in $\logic{L}$, as interpreted in~$\alg{A}$. The logic \emph{determined by} a class of matrices $\class{K}$ is the logic consisting precisely of the rules valid in each matrix in $\class{K}$.

  The $\logic{L}$-filters on $\alg{A}$ form an algebraic lattice $\Fi_{\logic{L}} \alg{A}$, where meets are inter\-sections and directed joins are directed unions. The smallest $\logic{L}$-filter which extends a given set $X \subseteq \alg{A}$ is called the $\logic{L}$-filter \emph{generated} by $X$ and denoted by $\Fg^{\alg{A}}_{\logic{L}} X$. The compact elements of the lattice $\Fi_{\logic{L}} \alg{A}$ are precisely the finitely generated $\logic{L}$-filters, i.e.\ $\logic{L}$-filters of the form $\Fg^{\alg{A}}_{\logic{L}} (a_{1}, \dots, a_{n}) \assign \Fg^{\alg{A}}_{\logic{L}} \{ a_{1}, \dots, a_{n} \}$ for some $a_{1}, \dots, a_{n} \in \alg{A}$. It is important to keep in mind that here and in other similar contexts we allow for $n \assign 0$. In that case the notation $\Fg^{\alg{A}}_{\logic{L}} (a_{1}, \dots, a_{n})$ is to be understood as $\Fg^{\alg{A}}_{\logic{L}} \emptyset$.

  A congruence $\theta$ of an algebra $\alg{A}$ is \emph{compatible} with a subset $F$ of $\alg{A}$ in case
\begin{align*}
  a \in F \text{ and } \pair{a}{b} \in \theta \implies b \in F.
\end{align*}
  Informally speaking, this means that the congruence $\theta$ does not cut across $F$. If $F$ is an $\logic{L}$-filter on $\alg{A}$ and $\theta$ is a congruence on $\alg{A}$ compatible with $F$, then $F / \theta \assign \set{a / \theta \in \alg{A} / \theta}{a \in F}$ is an $\logic{L}$-filter on $\alg{A} / \theta$. If $\pi_{\theta}\colon \alg{A} \to \alg{A} / \theta$ is the quotient map, then of course $F = \pi_{\theta}^{-1}[F / \theta]$.

  Using the notion of compatibility, we can introduce the \emph{algebraic counterpart} of a logic $\logic{L}$. This class, denoted by $\Alg \logic{L}$, consists of those algebras $\alg{A}$ where the only congruence compatible with all $\logic{L}$-filters on $\alg{A}$ is the identity congruence~$\Delta_{\alg{A}}$. Informally speaking, this means that $\alg{A}$ does not contain any logically redundant information: we cannot quotient $\alg{A}$ any further without thereby destroying some of the $\logic{L}$-filters on $\alg{A}$.

  The algebraic counterpart of a logic $\Alg \logic{L}$ is always a \emph{subdirect class} (a class of algebras closed under isomorphic images and subdirect products). Often but not always, the class $\Alg \logic{L}$ is closed under subalgebras, making it a \emph{prevariety} (a class of algebras closed under isomorphic images, subalgebras, and products), or even under subalgebras and ultraproducts, making it a \emph{quasivariety} (a prevariety closed under ultraproducts). In the best of cases, the algebraic counterpart of a logic is a \emph{variety} (a prevariety closed closed under homomorphic images).

\begin{fact} \label{fact: alg l variety}
  Let $\logic{L}$ be the logic determined by a class of matrices whose algebraic reducts lie in a variety $\class{K}$. Then $\Alg \logic{L} \subseteq \class{K}$.
\end{fact}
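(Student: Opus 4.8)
The plan is to unwind the definition of $\Alg\logic{L}$ and use the assumption that the defining matrices have algebraic reducts in the variety $\class{K}$. Recall that $\alg{A} \in \Alg\logic{L}$ means that the only congruence of $\alg{A}$ compatible with \emph{every} $\logic{L}$-filter on $\alg{A}$ is the identity congruence $\Delta_{\alg{A}}$. Since $\class{K}$ is a variety, it is closed under homomorphic images, subalgebras, and products, so to show $\alg{A} \in \class{K}$ it suffices to exhibit $\alg{A}$ as (isomorphic to) a subdirect product, or more simply a substructure of a product, of algebras already known to lie in $\class{K}$; indeed, since any equation failing in $\alg{A}$ would have to fail under some assignment, the cleanest route is to show directly that $\alg{A}$ validates every equation valid in $\class{K}$.

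First I would take an arbitrary $\alg{A} \in \Alg\logic{L}$ and consider the relation $\theta \assign \bigcap \set{\Ker h}{h \colon \alg{A} \to \alg{B}, \ \pair{\alg{B}}{G} \in \class{M}}$, where $\class{M}$ is the defining class of matrices and the intersection ranges over all homomorphisms from $\alg{A}$ into the algebraic reducts of matrices in $\class{M}$. This $\theta$ is a congruence of $\alg{A}$ as an intersection of kernels. The key step is to verify that $\theta$ is compatible with every $\logic{L}$-filter $F$ on $\alg{A}$. For this I would use the fact that, since $\logic{L}$ is determined by $\class{M}$, each $\logic{L}$-filter arises as an intersection of preimages of filters of the defining matrices; concretely, if $a \in F$, $\pair{a}{b} \in \theta$, and $h \colon \alg{A} \to \alg{B}$ is any homomorphism into a matrix $\pair{\alg{B}}{G} \in \class{M}$, then $h(a) = h(b)$, and by relating $F$ to these homomorphisms one concludes $b \in F$. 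Once compatibility is established, the definition of $\Alg\logic{L}$ forces $\theta = \Delta_{\alg{A}}$.

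From $\theta = \Delta_{\alg{A}}$ it follows that the family of all homomorphisms $h \colon \alg{A} \to \alg{B}$ (with $\alg{B}$ the reduct of some matrix in $\class{M}$) separates points of $\alg{A}$, i.e.\ their combined kernel is trivial. Hence the induced map $\alg{A} \to \prod \alg{B}$ is an embedding, realizing $\alg{A}$ as a subalgebra of a product of algebras lying in $\class{K}$. Since $\class{K}$ is a variety, it is closed under products and subalgebras, so $\alg{A} \in \class{K}$, as desired.

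The main obstacle I anticipate is the compatibility step, namely showing that the intersection-of-kernels congruence $\theta$ does not cut across any $\logic{L}$-filter $F$ on $\alg{A}$. This requires carefully connecting the abstract notion of an $\logic{L}$-filter (closure under all rules valid in $\logic{L}$) with the concrete matrices generating $\logic{L}$; the subtlety is that an arbitrary $\logic{L}$-filter need not itself be of the form $h^{-1}[G]$, so one must argue more indirectly, for instance by showing that whenever $h(a) = h(b)$ for all relevant $h$, the elements $a$ and $b$ are $\logic{L}$-indistinguishable relative to $F$. Everything else is a routine application of the closure properties of a variety together with the definition of $\Alg\logic{L}$.
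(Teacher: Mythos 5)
Your argument hinges on the claim that the congruence $\theta \assign \bigcap\set{\Ker h}{h\colon\alg{A}\to\alg{B},\ \pair{\alg{B}}{G}\in\class{M}}$ is compatible with every $\logic{L}$-filter on $\alg{A}$, and the obstacle you flag there is fatal, not merely technical. The premise you lean on --- that every $\logic{L}$-filter on an arbitrary algebra is an intersection of preimages $h^{-1}[G]$ of the designated sets of the defining matrices --- is false: determination of $\logic{L}$ by $\class{M}$ only controls consequence on the formula algebra, so it is the \emph{theories} of $\logic{L}$ that are such intersections, while on other algebras the lattice $\Fi_{\logic{L}}\alg{A}$ is in general much richer than the closure system generated by these preimages. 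Concretely, take $\logic{L} \assign \Luk$, determined by the single matrix $\pair{[0,1]}{\{1\}}$, and let $\alg{A}$ be Chang's MV-algebra $C$ (a totally ordered MV-algebra with a nonzero infinitesimal $\varepsilon$). Then $C \in \MV = \Alg\Luk$, but the only homomorphism $C \to [0,1]$ collapses the infinitesimals, so your $\theta$ has exactly two classes and in particular identifies $1$ with $\neg\varepsilon$; yet $\{1\}$ is a $\Luk$-filter on $C$ with which $\theta$ is plainly not compatible. The same example shows that the conclusion your argument would deliver, namely $\Alg\logic{L}\subseteq\AlgI\AlgS\AlgP(\set{\alg{B}}{\pair{\alg{B}}{G}\in\class{M}})$, is strictly stronger than the Fact and false: if $C$ embedded into a power of $[0,1]$, the coordinate projections would yield point-separating homomorphisms $C\to[0,1]$, which do not exist. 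The Fact only asserts membership in the (much larger) variety.

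The repair is to run your separation argument on the formula algebra, where it is sound, and then pass to arbitrary algebras via variety generation rather than via an embedding. If $\class{K}\vDash\varphi\equals\psi$, then every valuation into a matrix of $\class{M}$ identifies $\chi(\varphi,\tuple{\delta})$ with $\chi(\psi,\tuple{\delta})$ for every context $\chi$; since theories \emph{are} intersections of preimages $h^{-1}[G]$, this gives $\pair{\varphi}{\psi}\in\Leibniz^{\Fm}T$ for every theory $T$, i.e.\ $\pair{\varphi}{\psi}\in\Tarski\logic{L}$, whence $\Fm/\Tarski\logic{L}\vDash\varphi\equals\psi$. One then invokes $\AlgH\AlgS\AlgP(\Alg\logic{L})=\AlgH\AlgS\AlgP(\Fm/\Tarski\logic{L})$ (Theorem~5.76 of~\cite{font16}, which is what the paper's one-line proof appeals to) to conclude that $\Alg\logic{L}$ satisfies every equation valid in $\class{K}$, and hence $\Alg\logic{L}\subseteq\class{K}$. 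The same route is spelled out in the paper in the proof of Fact~\ref{fact: edct implies v l below k}.
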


\begin{proof}
  This follows from Theorem~5.76 and Proposition 5.79 of \cite{font16}.
\end{proof}

 Given a subdirect class $\class{K}$, a \emph{$\class{K}$-congruence} on an algebra $\alg{A}$ is a congruence $\theta$ on $\alg{A}$ such that $\alg{A} / \theta \in \class{K}$. The $\class{K}$-congruences on each algebra $\alg{A}$ form a complete lattice $\Con_{\class{K}} \alg{A}$, where meets are intersections. Tthe $\class{K}$-congruence \emph{generated} by a set of pairs $X \subseteq \alg{A}^{2}$ is the smallest $\class{K}$-congruence on $\alg{A}$ which extends $X$. This $\class{K}$-congruence is denoted by $\Cg^{\alg{A}}_{\class{K}} X$. In particular, the smallest $\class{K}$-congruence on $\alg{A}$ is $\Cg^{\alg{A}}_{\class{K}} \emptyset$. This smallest $\class{K}$-congruence will also be denoted by $\theta^{\alg{A}}_{\class{K}}$ here. Observe that for every algebra $\alg{A}$ there is a largest congruence compatible with all $\logic{L}$-filters on $\alg{A}$, namely the congruence $\theta^{\alg{A}}_{\Alg \logic{L}}$.

\section{Parametrized and parametrized local EDCF}
\label{sec: pledcf and pedcf}

  We now introduce the most general type of the EDCF to be considered here, namely the parametrized local EDCF. Other types of the EDCF will be defined as restrictions of the parametrized local EDCF. We then show that every logic in fact admits this form of the EDCF, and describe those logics which admit a parametrized EDCF. While this shows that the parametrized local EDCF is a trivial property of logics in the sense that every logic has one, keep in mind that knowing that a logic admits a parametrized local EDCF of a particular shape may well provide non-trivial information about the logic.

\begin{definition}
  A \emph{parametrized local equational definition of $n$-generated $\logic{L}$-filters} on a class of algebras $\class{K}$ for $n \in \omega$ is a family of sets equations $\boldPsi_{n}(x_{1}, \dots, x_{n}, y, \tuple{z})$, where the variables $x_{1}, \dots, x_{n}, y, \tuple{z}$ are distinct and $\tuple{z}$ is a possibly infinite tuple of variables (which we call \emph{parameters}), such that for each algebra $\alg{A} \in \class{K}$ and each $a_{1}, \dots, a_{n}, b \in \alg{A}$
\begin{align*}
  b \in \Fg^{\alg{A}}_{\logic{L}}(a_{1}, \dots, a_{n}) & \iff \alg{A} \vDash \boldTheta_{n}(a_{1}, \dots, a_{n}, b, \tuple{c}) \text{ for some } \boldTheta_{n} \in \boldPsi_{n}, \tuple{c} \in \alg{A}.
\end{align*}
  In a \emph{local definition} of $n$-generated filters each set $\boldTheta_{n} \in \boldPsi_{n}$ moreover has the form $\boldTheta_{n}(x_{1}, \dots, x_{n}, y)$, i.e.\ no parameters occur in~$\boldTheta_{n}$. A \emph{parametrized global definition} of $n$-generated filters is a set of equations $\boldTheta_{n}(x_{1}, \dots, x_{n}, y, \tuple{z})$ such that $\boldPsi_{n} \assign \{ \boldTheta_{n} \}$ is a parametrized local equational definition of $n$-generated filters. In a \emph{global equational definition} of $n$-generated filters this set $\boldTheta_{n}$ moreover has the form $\boldTheta_{n}(x_{1}, \dots, x_{n}, y)$, i.e.\ no parameters occur in~$\boldTheta_{n}$.
\end{definition}

  Note that in the definition of a parametrized local equational definition of $n$-filters we allow for $n \assign 0$, in which case the equivalence is interpreted as
\begin{align*}
  b \in \Fg^{\alg{A}}_{\logic{L}} \emptyset & \iff \alg{A} \vDash \boldTheta_{0}(b, \tuple{c}) \text{ for some } \boldTheta_{0} \in \boldPsi_{0} \text{ and some } \tuple{c} \in \alg{A}.
\end{align*}

\begin{definition}
  A \emph{(parametrized) local equational definition of compact $\logic{L}$-filters} on a class of algebras $\class{K}$ is a sequence $\boldPsi = (\boldPsi_{n})_{n \in \omega}$ of (parametrized) local equational definitions of $n$-generated $\logic{L}$-filters on $\class{K}$. If such a definition exists, we say that $\logic{L}$ \emph{has a (parametrized) local EDCF} on $\class{K}$, or more explicitly that it has the (parametrized) local EDCF($\boldPsi$) on $\class{K}$.
\end{definition}

\begin{definition}
 A \emph{(parametrized) global equational definition of compact $\logic{L}$-filters} on $\class{K}$ is a sequence $\boldTheta = (\boldTheta_{n})_{n \in \omega}$ of (parametrized) global equational definitions of $n$-generated $\logic{L}$-filters on $\class{K}$. If such a definition exists, we say that $\logic{L}$ \emph{has a (parametrized) global EDCF} on $\class{K}$, or more explicitly that it has the (parametrized) global EDCF($\boldTheta$) on $\class{K}$. Whenever convenient, we simply call $\boldTheta$ a \emph{(parametrized) equational definition of compact $\logic{L}$-filters}.
\end{definition}

  Allowing for infinitary conjunctions and disjunctions and quantification over infinitely many variables, we can rewrite the above definitions as follows: $\boldPsi$ is a parametrized local equational definition of compact $\logic{L}$-filters on $\class{K}$ if for each $\alg{A} \in \class{K}$, each $n \in \omega$, and all $a_{1}, \dots, a_{n}, b \in \alg{A}$
\begin{align*}
  b \in \Fg^{\alg{A}}_{\logic{L}}(a_{1}, \dots, a_{n}) \iff \alg{A} \vDash \exists \tuple{z} \; \bigvee_{\mathclap{\boldTheta_{n} \in \boldPsi_{n}}} \; \bigwedge \boldTheta_{n}(a_{1}, \dots, a_{n}, b, \tuple{z}).
\end{align*}
  It is a parametrized equational definition if
\begin{align*}
  b \in \Fg^{\alg{A}}_{\logic{L}}(a_{1}, \dots, a_{n}) \iff \alg{A} \vDash \exists \tuple{z}  \bigwedge \boldTheta_{n}(a_{1}, \dots, a_{n}, b, \tuple{z}),
\end{align*}
  a local equational definition if
\begin{align*}
  b \in \Fg^{\alg{A}}_{\logic{L}}(a_{1}, \dots, a_{n}) \iff \alg{A} \vDash \; \bigvee_{\mathclap{\boldTheta_{n} \in \boldPsi_{n}}} \; \bigwedge \boldTheta_{n}(a_{1}, \dots, a_{n}, b),
\end{align*}
  and a global equational definition if
\begin{align*}
  b \in \Fg^{\alg{A}}_{\logic{L}}(a_{1}, \dots, a_{n}) \iff \alg{A} \vDash \bigwedge \boldTheta_{n}(a_{1}, \dots, a_{n}, b),
\end{align*}
  Clearly if $\logic{L}$ has an EDCF (of any of the above forms) on $\class{K}$, then it has an EDCF (of the given form) on any subclass of $\class{K}$.

  Given a logic $\logic{L}$, the most natural choice of the class $\class{K}$ is of course the algebraic counterpart $\Alg \logic{L}$. Our main results, however, will be proved for an arbitrary prevariety $\class{K} \supseteq \Alg \logic{L}$. This approach has two (related) advantages. Firstly, the variety or quasivariety generated by $\Alg \logic{L}$ may be more convenient to work with than $\Alg \logic{L}$ itself. Secondly, this avoids the need to first compute the class $\Alg \logic{L}$ before deploying the results proved below. Finding a convenient class $\class{K} \supseteq \Alg \logic{L}$ is sometimes easier than precisely determining the class $\Alg \logic{L}$.

  Some logics fail to have an EDCF only for the trivial reason that they do not have a theorem. Consider, for instance, the fragment of local modal logic $\LocK$ in the signature $\Box, \wedge, \vee$. In the absence of the constant~$1$, the smallest filter of this fragment of $\LocK$ on each algebra is the empty filter. But the empty filter does not have a parametrized equational definition on the singleton algebra. Apart from this small deficiency, however, $\LocK$ enjoys most of the benefits of the EDCF. It would therefore be unfair to lump it together with logics where the EDCF fails for more substantial reasons. This motivates the following definition.

\begin{definition}
 If $\logic{L}$ has an equational definition (of any of the above forms) of $n$-generated filters for each $n \geq 1$  on $\class{K}$, we say that $\logic{L}$ \emph{almost has an EDCF} (of the given form) on~$\class{K}$.
\end{definition}

\begin{fact}
  $\logic{L}$ has a (parametrized) EDCF on a subdirect class $\class{K}$ if and only if it has a theorem and it almost has a (parametrized) EDCF on $\class{K}$.
\end{fact}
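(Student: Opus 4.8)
The plan is to establish the two implications separately; the forward direction is almost immediate once one extracts a theorem, and the converse reduces the missing level $n \assign 0$ to the level $n \assign 1$ by means of a one-variable theorem. For the forward direction, observe first that having a (parametrized) EDCF trivially entails almost having one, since the latter merely drops the requirement at $n \assign 0$. To produce a theorem, I would evaluate the given global definition $\boldTheta_{0}$ of $0$-generated filters on the one-element algebra $\alg{1}$, which belongs to $\class{K}$ as the subdirect product of the empty family. Every equation holds in $\alg{1}$, so its unique element $\ast$ satisfies $\alg{1} \vDash \exists \tuple{z} \, \bigwedge \boldTheta_{0}(\ast, \tuple{z})$; the definition then forces $\ast \in \Fg^{\alg{1}}_{\logic{L}} \emptyset$, so $\emptyset$ is not an $\logic{L}$-filter on $\alg{1}$. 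Since a premiseless rule $\vdash_{\logic{L}} \varphi$ can never hold in $\pair{\alg{1}}{\emptyset}$ whereas every rule with at least one premise holds there vacuously, the failure of $\emptyset$ to be an $\logic{L}$-filter exactly witnesses the existence of a theorem of $\logic{L}$.

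For the converse, assume $\logic{L}$ has a theorem and almost has a (parametrized) EDCF, so definitions $\boldTheta_{n}$ are given for all $n \geq 1$ and only the level $n \assign 0$ is missing. Fixing a theorem and identifying all its variables with a single variable $x$ yields, by structurality, a theorem $\tau(x)$ in one variable. The key step is the identity
\begin{align*}
  \Fg^{\alg{A}}_{\logic{L}} \emptyset = \Fg^{\alg{A}}_{\logic{L}}(\tau^{\alg{A}}(a)), \quad \text{valid for every } \alg{A} \in \class{K} \text{ and every } a \in \alg{A}.
\end{align*}
This holds because $\tau^{\alg{A}}(a)$ lies in every $\logic{L}$-filter, hence in the least one $\Fg^{\alg{A}}_{\logic{L}} \emptyset$, giving $\Fg^{\alg{A}}_{\logic{L}}(\tau^{\alg{A}}(a)) \subseteq \Fg^{\alg{A}}_{\logic{L}} \emptyset$, while the reverse inclusion is immediate from $\Fg^{\alg{A}}_{\logic{L}} \emptyset$ being the least filter.

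Having isolated this identity, I would define the missing level by substituting the term $\tau(y)$ for the generator variable $x_{1}$ in the level-one definition, setting $\boldTheta_{0}(y, \tuple{z}) \assign \boldTheta_{1}(\tau(y), y, \tuple{z})$; this works verbatim in both the parametrized and the parameter-free global readings, the latter simply having $\tuple{z}$ empty. Instantiating the definition of $1$-generated filters at the generator $\tau^{\alg{A}}(b)$ and the candidate $b$, and applying the identity above with $a \assign b$, then gives
\begin{align*}
  b \in \Fg^{\alg{A}}_{\logic{L}} \emptyset \iff b \in \Fg^{\alg{A}}_{\logic{L}}(\tau^{\alg{A}}(b)) \iff \alg{A} \vDash \exists \tuple{z} \, \bigwedge \boldTheta_{0}(b, \tuple{z}),
\end{align*}
which is precisely the sought definition at $n \assign 0$. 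The only real obstacle is spotting the reduction of the $0$-generated case to the $1$-generated one through a one-variable theorem; once the identity $\Fg^{\alg{A}}_{\logic{L}} \emptyset = \Fg^{\alg{A}}_{\logic{L}}(\tau^{\alg{A}}(a))$ is in place, the remaining verification is a routine substitution, and the trivial algebra disposes of the forward direction.
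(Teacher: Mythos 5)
Your proof is correct and follows essentially the same route as the paper: the singleton algebra (as the empty subdirect product) handles the forward direction, and the converse reduces the $n=0$ level to the $n=1$ level via a one-variable theorem $\tau(x)$, setting $\boldTheta_{0}(y) \assign \boldTheta_{1}(\tau(y), y)$ exactly as the paper does.
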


\begin{proof}
  We only consider the global case, since the parametrized case is entirely analogous. Left to right, if $\logic{L}$ has no theorems, then $\emptyset$ is an $\logic{L}$-filter on the singleton algebra $\{ \ast \}$. (This algebra belongs to every subdirect class by virtue of being the empty subdirect product.) But every equation is satisfied in $\{ \ast \}$ by the element $\ast$, so no set of equations (not even the empty set) can define $\Fg^{\alg{A}}_{\logic{L}} \emptyset$. Each logic with an EDCF therefore has a theorem.

  Conversely, if $\logic{L}$ has a theorem and almost has an EDCF, then in particular it has a theorem in at most one variable $\varphi(x)$ and it has an equational definition of $1$-generated $\logic{L}$-filters $\boldTheta_{1}$. Then
\begin{align*}
  b \in \Fg^{\alg{A}}_{\logic{L}} \emptyset \iff b \in \Fg^{\alg{A}}_{\logic{L}} \varphi(b) \iff \alg{A} \vDash \boldTheta_{1}(\varphi(b), b),
\end{align*}
  so $\logic{L}$ has an EDCF if we take $\boldTheta_{0}(x) \assign \boldTheta_{1}(\varphi(x), x)$.
\end{proof}

  In contrast, logics without any theorems can enjoy a local EDCF, since on every algebra the empty family $\emptyset$ is a local equational definition of the empty set. (Indeed $b \in \Fg^{\alg{A}}_{\logic{L}} \emptyset$ if and only if there is some $\boldTheta_{0} \in \emptyset$ such that $\alg{A} \vDash \boldTheta_{0}(b)$, since $\Fg^{\alg{A}}_{\logic{L}} \emptyset = \emptyset$.) To construct an example of a logic with a local EDCF but without any theorems, consider any logic $\logic{L}$ with the local EDCF($\boldPsi$). Like every logic, $\logic{L}$ has a theoremless variant $\logic{L}'$ such that $\Gamma \vdash_{\logic{L}'} \varphi$ if and only if $\Gamma$ is non-empty and $\Gamma \vdash_{\logic{L}} \varphi$. Then $\logic{L}'$ is a theoremless logic with the local EDCF: its filters are precisely the filters of $\logic{L}'$ plus the empty filter on every algebra, so replacing $\boldPsi_{0}$ with $\emptyset$ does the job.

  The results about the different forms of EDCF proved in this paper have obvious variants for the corresponding forms of almost EDCF, obtained by restricting to non-empty filters. We shall not state these explicitly, but the reader should be aware that for logics without theorems the property of interest is typically (always, in the parametrized case) almost EDCF rather than EDCF.

  The definition of an EDCF for $\class{K} \supseteq \Alg \logic{L}$ can be restated in terms of relative congruences on arbitary algebras (instead of the equality relation on algebras in $\class{K}$). Recall that $\theta^{\alg{A}}_{\class{K}}$ denotes the smallest $\class{K}$-congruence on $\alg{A}$.

\begin{fact} \label{fact: alternative definition of edcf}
  Let $\boldPsi = (\boldPsi_{n})_{n \in \omega}$ be a sequence of families of sets of equations of the form $\boldTheta_{n}(x_{1}, \dots, x_{n}, y, \tuple{z})$, where the variables $x_{1}, \dots, x_{n}, y, \tuple{z}$ are assumed to be distinct and $\tuple{z}$ is a possibly infinite tuple of variables. Then the following are equivalent for each logic $\logic{L}$ and each subdirect class $\class{K}$:
\begin{enumerate}[(i)]
\item For each $n \in \omega$, each algebra $\alg{A}$, and each $a_{1}, \dots, a_{n}, b \in \alg{A}$
\begin{align*}
  b \in \Fg^{\alg{A}}_{\logic{L}}(a_{1}, \dots, a_{n}) & \iff \boldTheta_{n}(a_{1}, \dots, a_{n}, b, \tuple{c}) \subseteq \theta^{\alg{A}}_{\class{K}} \\ & \phantom{\iff}\, \text{ for some } \boldTheta_{n} \in \boldPsi_{n} \text{ and some } \tuple{c} \in \alg{A}.
\end{align*}
\item For each $n \in \omega$, each algebra $\alg{A}$, and each $a_{1}, \dots, a_{n}, b \in \alg{A}$
\begin{align*}
  b \in \Fg^{\alg{A}}_{\logic{L}}(a_{1}, \dots, a_{n}) & \iff \alg{A} / \theta^{\alg{A}}_{\class{K}} \vDash \boldTheta_{n}(a_{1} / \theta^{\alg{A}}_{\class{K}}, \dots, a_{n} / \theta^{\alg{A}}_{\class{K}}, b / \theta^{\alg{A}}_{\class{K}}, \tuple{c} / \theta^{\alg{A}}_{\class{K}}) \\ & \phantom{\iff}\, \text{ for some } \boldTheta_{n} \in \boldPsi_{n} \text{ and some } \tuple{c} \in \alg{A}.
\end{align*}
\item $\class{K} \supseteq \Alg \logic{L}$ and $\logic{L}$ has the parametrized local EDCF($\boldPsi$) on $\class{K}$.
\end{enumerate}
\end{fact}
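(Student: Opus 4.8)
The plan is to prove the two equivalences $(\text{i}) \Leftrightarrow (\text{ii})$ and $(\text{ii}) \Leftrightarrow (\text{iii})$, the central tool being the interaction between the smallest $\class{K}$-congruence and filter generation in the quotient. Throughout write $\theta \assign \theta^{\alg{A}}_{\class{K}}$ and let $\pi \colon \alg{A} \to \alg{A}/\theta$ be the quotient map; since $\class{K}$ is subdirect, $\theta$ is itself the intersection of all $\class{K}$-congruences and hence a $\class{K}$-congruence, so $\alg{A}/\theta \in \class{K}$ for every $\alg{A}$. The equivalence $(\text{i}) \Leftrightarrow (\text{ii})$ is purely notational: for a fixed $\boldTheta_n$ and fixed arguments, an equation $\alpha \equals \beta$ of $\boldTheta_n$ evaluated at $a_1, \dots, a_n, b, \tuple{c}$ yields a pair lying in $\theta$ exactly when $\alpha$ and $\beta$ become equal in $\alg{A}/\theta$ under the assignment sending each variable to its $\theta$-class; hence $\boldTheta_{n}(a_{1}, \dots, a_{n}, b, \tuple{c}) \subseteq \theta$ holds iff $\alg{A}/\theta \vDash \boldTheta_{n}(a_{1}/\theta, \dots, b/\theta, \tuple{c}/\theta)$, and quantifying over $\boldTheta_n \in \boldPsi_n$ and $\tuple{c}$ preserves this.

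For $(\text{iii}) \Rightarrow (\text{ii})$ the key lemma is that, whenever $\theta$ is compatible with all $\logic{L}$-filters on $\alg{A}$, one has $b \in \Fg^{\alg{A}}_{\logic{L}}(a_{1}, \dots, a_{n})$ iff $\pi(b) \in \Fg^{\alg{A}/\theta}_{\logic{L}}(\pi(a_{1}), \dots, \pi(a_{n}))$. To prove it I would first show $\pi[\Fg^{\alg{A}}_{\logic{L}}(a_{1}, \dots, a_{n})] = \Fg^{\alg{A}/\theta}_{\logic{L}}(\pi(a_{1}), \dots, \pi(a_{n}))$: the left side is an $\logic{L}$-filter (as $\theta$ is compatible with it) containing the generators, giving one inclusion, while the preimage of the right-hand filter is an $\logic{L}$-filter containing the $a_i$, giving the other. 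The forward direction of the lemma follows at once, and the backward direction uses compatibility a second time, since $\pi(b) = \pi(f)$ for some $f \in \Fg^{\alg{A}}_{\logic{L}}(a_{1}, \dots, a_{n})$ forces $b$ into that filter. Now under (iii), $\class{K} \supseteq \Alg \logic{L}$ yields $\theta \subseteq \theta^{\alg{A}}_{\Alg \logic{L}}$, so $\theta$ is compatible with all $\logic{L}$-filters and the lemma applies; combining it with the EDCF($\boldPsi$) on $\class{K}$ at $\alg{A}/\theta \in \class{K}$, and lifting each parameter tuple from $\alg{A}/\theta$ back to some $\tuple{c}$ in $\alg{A}$ along the surjection $\pi$, gives precisely (ii).

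For $(\text{ii}) \Rightarrow (\text{iii})$ I would verify the two conjuncts of (iii) separately. The EDCF($\boldPsi$) on $\class{K}$ is immediate, since for $\alg{A} \in \class{K}$ we have $\theta = \Delta_{\alg{A}}$ and $\alg{A}/\theta = \alg{A}$, so (ii) specializes to the defining equivalence of the parametrized local EDCF($\boldPsi$). For $\class{K} \supseteq \Alg \logic{L}$ I would use the equivalent condition that $\theta$ is compatible with every $\logic{L}$-filter on every $\alg{A}$: this means $\theta \subseteq \theta^{\alg{A}}_{\Alg \logic{L}}$ (the largest compatible congruence), and this inclusion holding for all $\alg{A}$ is in turn equivalent to $\class{K} \supseteq \Alg \logic{L}$, as one checks by evaluating at $\alg{A} \in \Alg \logic{L}$. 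To get compatibility, take an $\logic{L}$-filter $F$, an $a \in F$, and a pair $\pair{a}{b} \in \theta$; it suffices to show $b \in \Fg^{\alg{A}}_{\logic{L}}(a) \subseteq F$. Applying (ii) with $n = 1$ to the trivial membership $a \in \Fg^{\alg{A}}_{\logic{L}}(a)$ produces $\boldTheta_1 \in \boldPsi_1$ and $\tuple{c}$ with $\alg{A}/\theta \vDash \boldTheta_1(a/\theta, a/\theta, \tuple{c}/\theta)$; because $a/\theta = b/\theta$, the same witness gives $\alg{A}/\theta \vDash \boldTheta_1(a/\theta, b/\theta, \tuple{c}/\theta)$, and the converse direction of (ii) then yields $b \in \Fg^{\alg{A}}_{\logic{L}}(a)$.

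The step I expect to be the main obstacle is this final compatibility argument: it is where the apparently harmless quotient reformulation (ii) is made to deliver the genuinely structural inclusion $\class{K} \supseteq \Alg \logic{L}$, and it rests on the simple but essential observation that replacing a generator by a $\theta$-equivalent element leaves untouched the equations satisfied in $\alg{A}/\theta$. Everything else is bookkeeping around the generation-commutes-with-quotient lemma and the elementary correspondence between congruence membership and equation satisfaction in the quotient.
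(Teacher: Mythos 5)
Your proof is correct and follows the same overall route as the paper's: the equivalence (i) $\Leftrightarrow$ (ii) is the standard correspondence between congruence membership and satisfaction in the quotient; (iii) $\Rightarrow$ (ii) passes through the compatibility of $\theta^{\alg{A}}_{\class{K}}$ with all $\logic{L}$-filters (via $\theta^{\alg{A}}_{\class{K}} \leq \theta^{\alg{A}}_{\Alg \logic{L}}$ and the fact that compatible congruences form a downset); and (ii) $\Rightarrow$ (iii) splits into the trivial specialization to $\alg{A} \in \class{K}$ plus a proof that $\Alg \logic{L} \subseteq \class{K}$ by showing $\theta^{\alg{A}}_{\class{K}}$ compatible with every $\logic{L}$-filter on each $\alg{A} \in \Alg \logic{L}$. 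The one place you genuinely diverge is that final compatibility step. The paper argues it in three stages: compatibility with $n$-generated filters for $n \geq 1$ (since by (ii) these are unions of $\theta^{\alg{A}}_{\class{K}}$-classes), a separate case for $\Fg^{\alg{A}}_{\logic{L}} \emptyset$, and a passage to arbitrary filters as directed unions of compact ones, using finitarity. You collapse all of this into a single observation: if $a \in F$ and $\pair{a}{b} \in \theta^{\alg{A}}_{\class{K}}$, then the $n = 1$ instance of (ii) applied to the trivial membership $a \in \Fg^{\alg{A}}_{\logic{L}}(a)$, whose right-hand side depends on the element in the $y$-slot only through its $\theta^{\alg{A}}_{\class{K}}$-class, yields $b \in \Fg^{\alg{A}}_{\logic{L}}(a) \subseteq F$. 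This works uniformly for every filter $F$ (empty, compact, or not) and dispenses with the directed-union step --- a mild but genuine simplification. Everything else, including the generation-commutes-with-quotient lemma you isolate for (iii) $\Rightarrow$ (ii), matches what the paper does implicitly.
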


\begin{proof}
  The equivalence of the right-hand sides of (i) and (ii) is clear.

  (iii) $\Rightarrow$ (ii): if $\class{K} \supseteq \Alg \logic{L}$, then $\theta^{\alg{A}}_{\class{K}} \leq \theta^{\alg{A}}_{\Alg \logic{L}}$. But $\theta^{\alg{A}}_{\Alg \logic{L}}$ is compatible with each $\logic{L}$-filter on $\alg{A}$ and the congruences compatible with an $\logic{L}$-filter form a downset in $\Con \alg{A}$, so
\begin{align*}
  b \in \Fg^{\alg{A}}_{\logic{L}}(a_{1}, \dots, a_{n}) & \iff b / \theta^{\alg{A}}_{\class{K}} \in \Fg^{\alg{A} / \theta^{\alg{A}}_{\class{K}}}_{\logic{L}}(a_{1} / \theta^{\alg{A}}_{\class{K}}, \dots, a_{n} / \theta^{\alg{A}}_{\class{K}}) \\
  & \iff \alg{A} / \theta^{\alg{A}}_{\class{K}} \vDash \boldTheta_{n}(a_{1} / \theta^{\alg{A}}_{\class{K}}, \dots, a_{n} / \theta^{\alg{A}}_{\class{K}}, b / \theta^{\alg{A}}_{\class{K}}, \tuple{c} / \theta^{\alg{A}}_{\class{K}}) \\
  & \phantom{\iff}\, \text{ for some } \boldTheta_{n} \in \boldPsi_{n} \text{ and some } \tuple{c} / \theta^{\alg{A}}_{\class{K}} \in \alg{A} / \theta^{\alg{A}}_{\class{K}}.
\end{align*}

  (ii) $\Rightarrow$ (iii): $\theta^{\alg{A}}_{\class{K}}$ is the identity congruence on each $\alg{A} \in \class{K}$, so if $\class{K} \supseteq \Alg \logic{L}$, then (iii) is a special case of (ii). It remains to show that (ii) implies that $\Alg \logic{L} \subseteq \class{K}$. Thus consider $\alg{A} \in \Alg \logic{L}$. It suffices to show that $\theta^{\alg{A}}_{\class{K}}$ is compatible with every $\logic{L}$-filter on $\alg{A}$, since then $\theta^{\alg{A}}_{\class{K}} \leq \theta^{\alg{A}}_{\Alg \logic{L}} = \Delta_{\alg{A}}$, so indeed $\Alg \logic{L} \subseteq \class{K}$. Given $n \geq 1$, this holds for $n$-generated $\logic{L}$-filters on $\alg{A}$ because by (ii) each such filter is the homomorphic preimage of an $n$-generated $\logic{L}$-filter on $\alg{A} / \theta^{\alg{A}}_{\class{K}}$ with respect to the projection map $\pi\colon \alg{A} \to \alg{A} / \theta^{\alg{A}}_{\class{K}}$. The $\logic{L}$-filter $\Fg^{\alg{A}}_{\logic{L}} \emptyset$ is either empty or it is a $1$-generated $\logic{L}$-filter, in either case $\theta^{\alg{A}}_{\class{K}}$ is compatible with $\Fg^{\alg{A}}_{\logic{L}} \emptyset$. Finally, since $\logic{L}$ is finitary, each $\logic{L}$-filter is a directed union of compact $\logic{L}$-filters, therefore $\theta^{\alg{A}}_{\class{K}}$ is compatible with all $\logic{L}$-filters. But the equality relation is the largest congruence on $\alg{A} \in \Alg \logic{L}$ compatible with each $\logic{L}$-filter, so $\theta^{\alg{A}}_{\class{K}} = \Delta_{\alg{A}}$.
\end{proof}

  The following theorem shows that having a parametrized local EDCF is a trivial property of logics, which on its own does not provide any information about the logic in question. What may provide non-trivial information about a logic is knowing a particular parametrized local EDCF for the logic.

\begin{theorem} \label{thm: pledcf}
  Every logic has a parametrized local EDCF on the class of all algebras (in the signature of the logic).
\end{theorem}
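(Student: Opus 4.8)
The plan is to exploit the full flexibility of the parametrized local EDCF---an existentially quantified disjunction, over a family $\boldPsi_{n}$, of existentially quantified conjunctions of equations---in order to encode the entire notion of a \emph{derivation} of $b$ from $a_{1}, \dots, a_{n}$ in $\alg{A}$. Since the class $\class{K}$ of all algebras trivially satisfies $\Alg \logic{L} \subseteq \class{K}$ and makes $\theta^{\alg{A}}_{\class{K}} = \Delta_{\alg{A}}$ the identity congruence on every $\alg{A}$, by Fact~\ref{fact: alternative definition of edcf} it suffices to produce, for each $n$, a family $\boldPsi_{n}$ of sets of equations such that $b \in \Fg^{\alg{A}}_{\logic{L}}(a_{1}, \dots, a_{n})$ holds if and only if some $\boldTheta_{n} \in \boldPsi_{n}$ is satisfied in $\alg{A}$ at $a_{1}, \dots, a_{n}, b$ for a suitable choice of parameters. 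A tempting but insufficient first attempt is to take the single equations $y \equals \varphi(x_{1}, \dots, x_{n}, \tuple{z})$ ranging over all $\varphi$ with $x_{1}, \dots, x_{n} \vdash_{\logic{L}} \varphi$: this captures a subset of $\Fg^{\alg{A}}_{\logic{L}}(a_{1}, \dots, a_{n})$, but in general a proper one, because generating a filter on $\alg{A}$ also identifies the values of distinct formulas, and such identifications cannot be reproduced by a single provable formula.

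The correct device is to name, by means of parameters, each intermediate element of a derivation together with the arguments of each rule application, and to use additional equations to \emph{glue} the premises of a rule to previously derived elements via equality of values. Since $\logic{L}$ is finitary, $b \in \Fg^{\alg{A}}_{\logic{L}}(a_{1}, \dots, a_{n})$ if and only if there is a finite sequence $b_{1}, \dots, b_{\ell} = b$ in $\alg{A}$ in which each $b_{i}$ is either some $a_{s}$ or is obtained as $g(\delta)$ for a rule $\gamma_{1}, \dots, \gamma_{m} \vdash_{\logic{L}} \delta$ and a homomorphism $g \colon \Fm \to \alg{A}$ all of whose premise-values $g(\gamma_{1}), \dots, g(\gamma_{m})$ occur earlier in the sequence. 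I would index $\boldPsi_{n}$ by \emph{derivation templates}: a length $\ell$, together with a choice for each step $i$ of either an index $s(i) \in \{ 1, \dots, n \}$ (axiom step) or a valid rule $\gamma^{i}_{1}, \dots, \gamma^{i}_{m_i} \vdash_{\logic{L}} \delta^{i}$ in variables $p^{i}_{1}, \dots, p^{i}_{r_i}$ with pointers $\kappa(i, j) < i$ (rule step). For each template fix disjoint finite blocks of the parameter tuple $\tuple{z}$: variables $u_{1}, \dots, u_{\ell}$ naming the steps and variables $w^{i}_{1}, \dots, w^{i}_{r_i}$ naming the arguments of each rule step. The associated set $\boldTheta_{n}$ then consists of $u_{i} \equals x_{s(i)}$ for each axiom step, of $\gamma^{i}_{j}(w^{i}_{1}, \dots, w^{i}_{r_i}) \equals u_{\kappa(i,j)}$ for each premise of each rule step and $\delta^{i}(w^{i}_{1}, \dots, w^{i}_{r_i}) \equals u_{i}$ for its conclusion (where $\gamma^{i}_{j}(w^{i}_{1}, \dots, w^{i}_{r_i})$ abbreviates $\gamma^{i}_{j}$ with each $p^{i}_{t}$ replaced by $w^{i}_{t}$), and of the single equation $u_{\ell} \equals y$.

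I would then verify both directions directly. If $\alg{A} \vDash \boldTheta_{n}(a_{1}, \dots, a_{n}, b, \tuple{c})$ for some template and parameters $\tuple{c}$, reading off the values $b_{i}$ of the $u_{i}$ yields a derivation: axiom steps give elements of $\{ a_{1}, \dots, a_{n} \}$, and at a rule step the assignment $p^{i}_{t} \mapsto w^{i}_{t}$ defines a homomorphism sending each premise $\gamma^{i}_{j}$ to the earlier value $b_{\kappa(i,j)}$ and the conclusion $\delta^{i}$ to $b_{i}$, so an induction on $i$ (using that the pointers strictly decrease) shows $b_{i} \in \Fg^{\alg{A}}_{\logic{L}}(a_{1}, \dots, a_{n})$, whence $b = b_{\ell}$ lies in the filter. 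Conversely, any finite derivation of $b$ supplied by finitarity determines a template and a choice of parameters satisfying every equation of the corresponding $\boldTheta_{n}$. Finally I would observe that there are only countably many templates and each uses only finitely many parameters, so $\boldPsi_{n}$ is a legitimate family over a single infinite parameter tuple $\tuple{z}$. The main obstacle here is conceptual rather than computational: recognizing that the generated filter cannot be captured by provable formulas alone, and that one must instead reify an entire derivation at the level of equations, letting equality between parameter-terms carry the flow of information between derivation steps.
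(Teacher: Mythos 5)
Your proof is correct and follows essentially the same strategy as the paper's: both reify a finite derivation of $b$ from $a_{1}, \dots, a_{n}$ (available by finitarity) as a set of equations, using parameters to name the intermediate elements and the arguments of each rule instance, with equalities gluing premise values to previously derived elements. The paper packages this via the iterated one-step closure operator $C$ and then remarks that the resulting chain condition ``can be rephrased as a parametrized local EDCF,'' which is precisely the template construction you spell out explicitly.
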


\begin{proof}
  Consider a logic $\logic{L}$ and an algebra $\alg{A}$. For each $X \subseteq \alg{A}$ we define ${C(X) \subseteq \alg{A}}$ so that $b \in C(X)$ if and only if there is a finitary rule ${\gamma_{1}, \dots, \gamma_{m} \vdash \varphi}$ valid in $\logic{L}$ and a homomorphism $h\colon \Fm \to \alg{A}$ such that $h(\gamma_{i}) \in X$ for each $i \in \{ 1, \dots, m \}$ and ${h(\varphi) = b}$. The map $C$ is monotone: if $X \subseteq Y$, then $C(X) \subseteq C(Y)$. It is also increasing: $X \subseteq C(X)$. Finally, it is finitary: if $a \in C(X)$, then there is a finite subset $Y \subseteq X$ such that $a \in C(Y)$. We shall write $C(a_{1}, \dots, a_{n})$ for $C(\{ a_{1}, \dots, a_{n} \})$.

  A set $X \subseteq \alg{A}$ is an $\logic{L}$-filter if and only if $C(X) \subseteq X$. Let us take
\begin{align*}
  C_{0}(X) & \assign X, & C_{i+1}(X) & \assign C(C_{i}(X)) \text{ for } i \in \omega, & C_{\omega}(X) & \assign \bigcup_{i \in \omega} F_{i}.
\end{align*}
  We know that $C_{\omega}(X) = \Fg^{\alg{A}}_{\logic{L}}(X)$~\cite[Theorem~2.23]{font16}. A brief self-contained proof of this fact runs as follows: if $X \subseteq F \in \Fi_{\logic{L}} \alg{A}$, then $C(X) \subseteq C(F) \subseteq F$, and thus $C_{\omega}(X) \subseteq F$. Conversely, if $a \in C(C_{\omega}(X))$, then $a \in C(C_{n}(X))$ for some $n \in \omega$ (because $C$ is finitary), and thus $a \in C_{n+1}(X) \subseteq C_{\omega}(X)$. Therefore $C(C_{\omega}(X)) \subseteq C_{\omega}(X)$ and $C_{\omega}(X)$ is an $\logic{L}$-filter.

  To prove that $\logic{L}$ has a parametrized local EDCF, observe first that there is a family of sets of equations $\boldPsi_{n}(x_{1}, \dots, x_{n}, y, \tuple{z})$, where $\tuple{z}$ is a tuple of variables $z_{i}$ for $i \in \omega$, such that
\begin{align*}
  b \in C(a_{1}, \dots, a_{n}) \iff \alg{A} \vDash \boldTheta_{n}(a_{1}, \dots, a_{n}, b, \tuple{c}) \text{ for some $\boldTheta_{n} \in \boldPsi_{n}$ and $\tuple{c} \in \alg{A}$.}
\end{align*}
  This family contains a set of equations $\boldTheta_{n}$ for each pair consisting of a finitary rule $\gamma_{1}, \dots, \gamma_{m} \vdash \varphi$ in the variables $\tuple{z}$ and a function $f\colon \{ 1, \dots, m \} \to \{ 1, \dots, n \}$, namely
\begin{align*}
  \boldTheta_{n}(x_{1}, \dots, x_{n}, y, \tuple{z}) \assign \{ \gamma_{1} \equals x_{f(1)}, \dots, \gamma_{n} \equals x_{f(n)}, \varphi \equals y \}.
\end{align*}

  Now $b \in \Fg^{\alg{A}}_{\logic{L}}(a_{1}, \dots, a_{n})$ if and only if $b \in C_{\omega}(a_{1}, \dots, a_{n})$, i.e.\ if and only if $b \in C_{m}(a_{1}, \dots, a_{n})$ for some $m \in \omega$. Because the map $C$ is finitary, this holds if and only if there are $d_{1}, \dots, d_{k} \in \alg{A}$ such that
\begin{align*}
  & d_{1} \in C(a_{1}, \dots, a_{n}), \\
  & d_{2} \in C(a_{1}, \dots, a_{n}, d_{1}), \\
  & \dots, \\
  & d_{k} \in C(a_{1}, \dots, a_{n}, d_{1}, \dots, d_{k-1}), \\
  & d_{k} = b.
\end{align*}
  By the equivalence proved in the previous paragraph, this holds if and only if there are tuples of parameters $\tuple{c}_{1}, \dots, \tuple{c}_{k} \in \alg{A}$, elements $d_{1}, \dots, d_{k} \in \alg{A}$, and sets of equations $\boldTheta_{n+i} \in \boldPsi_{n+i}$ for $i \in \{ 0, \dots, k - 1 \}$ such that for all $i \in \{ 0, \dots, k - 1 \}$ we have $\alg{A} \vDash \boldTheta_{n+i}(a_{1}, \dots, a_{n}, d_{i+1}, d_{1}, \dots, d_{i}, \tuple{c})$ and moreover $d_{k} = b$. But this condition can be rephrased as a parametrized local EDCF.
\end{proof}

  If a logic $\logic{L}$ has a parametrized local EDCF with respect to two families, $\boldPsi$ and $\boldPsi'$, then these families are in a sense equivalent.

\begin{fact} \label{fact: edcf unique}
  Suppose that $\logic{L}$ has the parametrized local EDCF($\boldPsi$) on a subdirect class $\class{K}$. Then $\logic{L}$ also has the parametrized local EDCF($\boldPsi'$) on $\class{K}$ if and only if for each $\boldTheta_{n} \in \boldPsi_{n}$ there is some $\boldTheta'_{n} \in \boldPsi'_{n}$ such that
\begin{align*}
  \class{K} \vDash \exists \tuple{z} \bigwedge \boldTheta_{n}(x_{1}, \dots, x_{n}, y, \tuple{z}) \implies \exists \tuple{z} \bigwedge \boldTheta'_{n}(x_{1}, \dots, x_{n}, y, \tuple{z})
\end{align*}
  and for each $\boldTheta'_{n} \in \boldPsi'_{n}$ there is some $\boldTheta_{n} \in \boldPsi_{n}$ such that
\begin{align*}
  \class{K} \vDash \exists \tuple{z} \bigwedge \boldTheta'_{n}(x_{1}, \dots, x_{n}, y, \tuple{z}) \implies \exists \tuple{z} \bigwedge \boldTheta_{n}(x_{1}, \dots, x_{n}, y, \tuple{z}).
\end{align*}
\end{fact}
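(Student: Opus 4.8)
The plan is to establish the two implications separately, exploiting the symmetry between $\boldPsi$ and $\boldPsi'$. The left-to-right (``if'') direction is a direct two-way chase, whereas the right-to-left (``only if'') direction requires building, for each $\boldTheta_n$, a most general model in which it is realized.

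For the ``if'' direction I would assume the two stated families of implications together with the hypothesis that $\logic{L}$ has the parametrized local EDCF($\boldPsi$), and check directly that $\boldPsi'$ defines $n$-generated filter generation on each $\alg{A} \in \class{K}$. If $b \in \Fg^{\alg{A}}_{\logic{L}}(a_1, \dots, a_n)$, then EDCF($\boldPsi$) supplies some $\boldTheta_n \in \boldPsi_n$ with $\alg{A} \vDash \exists \tuple{z} \bigwedge \boldTheta_n(a_1, \dots, a_n, b, \tuple{z})$, and the first family of implications converts this into $\alg{A} \vDash \exists \tuple{z} \bigwedge \boldTheta'_n(a_1, \dots, a_n, b, \tuple{z})$ for a suitable $\boldTheta'_n \in \boldPsi'_n$. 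Conversely, if some $\boldTheta'_n \in \boldPsi'_n$ is satisfied at $(a_1, \dots, a_n, b)$, the second family of implications produces a satisfied $\boldTheta_n \in \boldPsi_n$, whence $b \in \Fg^{\alg{A}}_{\logic{L}}(a_1, \dots, a_n)$ again by EDCF($\boldPsi$). Note that this direction never uses that $\boldPsi'$ is itself an EDCF.

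For the ``only if'' direction I would assume $\logic{L}$ has both EDCF($\boldPsi$) and EDCF($\boldPsi'$) on $\class{K}$; by symmetry it is enough to find, for a fixed $\boldTheta_n \in \boldPsi_n$, a single $\boldTheta'_n \in \boldPsi'_n$ with $\class{K} \vDash \exists \tuple{z} \bigwedge \boldTheta_n \implies \exists \tuple{z} \bigwedge \boldTheta'_n$. First I would view the equations of $\boldTheta_n$ as pairs of elements of the formula algebra $\Fm$ on the variables $x_1, \dots, x_n, y$ and the parameters $\tuple{z}$, and form the quotient $\alg{B} \assign \Fm / \Cg^{\Fm}_{\class{K}}(\boldTheta_n)$, which lies in $\class{K}$; write $\bar{a}_i, \bar{b}$ for the images of $x_i, y$. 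By construction $\alg{B} \vDash \exists \tuple{z} \bigwedge \boldTheta_n(\bar{a}_1, \dots, \bar{a}_n, \bar{b}, \tuple{z})$, witnessed by the images of the $z_i$, so EDCF($\boldPsi$) gives $\bar{b} \in \Fg^{\alg{B}}_{\logic{L}}(\bar{a}_1, \dots, \bar{a}_n)$, and then EDCF($\boldPsi'$) yields one $\boldTheta'_n \in \boldPsi'_n$ and witnesses $\tuple{d} \in \alg{B}$ with $\alg{B} \vDash \boldTheta'_n(\bar{a}_1, \dots, \bar{a}_n, \bar{b}, \tuple{d})$. To see that this $\boldTheta'_n$ works, take any $\alg{C} \in \class{K}$ and any tuple with $\alg{C} \vDash \boldTheta_n(a_1, \dots, a_n, b, \tuple{e})$; the universal property of $\alg{B}$ furnishes a homomorphism $g \colon \alg{B} \to \alg{C}$ with $g(\bar{a}_i) = a_i$ and $g(\bar{b}) = b$, and applying $g$ to the witnessed instance yields $\alg{C} \vDash \boldTheta'_n(a_1, \dots, a_n, b, g(\tuple{d}))$, hence $\alg{C} \vDash \exists \tuple{z} \bigwedge \boldTheta'_n(a_1, \dots, a_n, b, \tuple{z})$. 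Interchanging the roles of $\boldPsi$ and $\boldPsi'$ delivers the second family of implications.

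The hard part will be justifying the universal property of $\alg{B}$ invoked above: that every tuple realizing $\boldTheta_n$ in a member of $\class{K}$ is the image of the generic tuple $(\bar{a}_1, \dots, \bar{a}_n, \bar{b})$ under some homomorphism out of $\alg{B}$. For a homomorphism $f \colon \Fm \to \alg{C}$ realizing $\boldTheta_n$ this amounts to $\Cg^{\Fm}_{\class{K}}(\boldTheta_n) \subseteq \Ker f$, which is immediate once $\Ker f$ is known to be a $\class{K}$-congruence, i.e.\ once the subalgebra $f[\Fm] \leq \alg{C}$ is again in $\class{K}$. This holds automatically when $\class{K}$ is closed under subalgebras, which is the prevariety setting in which the paper's main theorems are proved, and it is the sole place where a closure assumption on $\class{K}$ enters; for a bare subdirect class the universality step requires additional care, since $\Ker f$ need not be a $\class{K}$-congruence and one cannot simply pass to the quotient by the smallest $\class{K}$-congruence without losing the literal satisfaction of $\boldTheta'_n$ needed for the transfer (one may instead reformulate the whole argument over arbitrary algebras via Fact~\ref{fact: alternative definition of edcf}, replacing equality by the congruence $\theta^{\alg{A}}_{\class{K}}$).
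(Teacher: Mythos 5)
Your sketch of the left-to-right (``if'') direction matches the paper's, which simply declares that direction straightforward. For the hard direction your route is genuinely different from the paper's and, as you yourself flag, it only goes through when $\class{K}$ is closed under subalgebras: the factorization of $f \colon \Fm \to \alg{C}$ through $\alg{B} \assign \Fm / \Cg^{\Fm}_{\class{K}}(\boldTheta_{n})$ needs $\Ker f$ to be a $\class{K}$-congruence, i.e.\ $f[\Fm] \in \class{K}$. The Fact, however, is stated for an arbitrary subdirect class, and it is invoked at that level of generality later on (e.g.\ in Theorem~\ref{thm: removing locality}). Your proposed repair via Fact~\ref{fact: alternative definition of edcf} does not obviously work either: that fact presupposes $\class{K} \supseteq \Alg \logic{L}$, which is not among the hypotheses here and does not follow from merely having an EDCF on $\class{K}$. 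So, as written, the proposal establishes the statement only under a stronger closure assumption than the one given.

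The paper closes exactly this gap with a device that uses only the closure a subdirect class is guaranteed to have, namely closure under arbitrary direct products (these being special subdirect products). Since $\boldPsi$ and $\boldPsi'$ define the same relation $b \in \Fg^{\alg{A}}_{\logic{L}}(a_{1}, \dots, a_{n})$ on $\class{K}$, each formula $\exists \tuple{z} \bigwedge \boldTheta_{n}$ entails over $\class{K}$ the disjunction $\bigvee_{\boldTheta'_{n} \in \boldPsi'_{n}} \exists \tuple{z} \bigwedge \boldTheta'_{n}$. If no single disjunct were entailed, then for each $\boldTheta'_{n}$ there would be a counterexample valuation on some $\alg{A}_{\boldTheta'_{n}} \in \class{K}$; the product of these algebras together with the product valuation still satisfies $\exists \tuple{z} \bigwedge \boldTheta_{n}$ (primitive positive formulas persist under products) yet refutes every disjunct, because a witness in the product would project to a witness in the corresponding factor. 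This contradicts the entailment of the disjunction. Your generic-model argument is a perfectly serviceable alternative in the prevariety setting of the paper's main theorems, and it has the merit of exhibiting the witnessing $\boldTheta'_{n}$ concretely, but to prove the Fact as stated you should replace the universal-property step by this product-of-counterexamples argument.
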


\begin{proof}
  The verification of the right-to-left direction is straightforward and left to the reader. Conversely, suppose that both $\boldPsi$ and $\boldPsi'$ are parametrized local EDCFs for $\logic{L}$ on $\class{K}$. Then
\begin{align*}
  \class{K} \vDash \exists \tuple{z} \bigwedge \boldTheta_{n}(x_{1}, \dots, x_{n}, y, \tuple{z}) \implies \bigvee_{\boldTheta'_{n} \in \boldPsi'_{n}} \left( \exists \tuple{z} \bigwedge \boldTheta'_{n}(x_{1}, \dots, x_{n}, y, \tuple{z}) \right).
\end{align*}
  We prove the first of the two asserted implications by contradiction. The second implication will then follow by symmetry. If for each $\boldTheta'_{n} \in \boldPsi'_{n}$
\begin{align*}
  \class{K} \nvDash \exists \tuple{z} \bigwedge \boldTheta_{n}(x_{1}, \dots, x_{n}, y, \tuple{z}) \implies \exists \tuple{z} \bigwedge \boldTheta'_{n}(x_{1}, \dots, x_{n}, y, \tuple{z}),
\end{align*}
  as witnessed by a valuation $v_{\boldTheta'_{n}}$ on $\alg{A}_{\boldTheta'_{n}} \in \class{K}$, then the algebra $\alg{A} \assign \prod_{\boldTheta'_{n} \in \boldPsi'_{n}} \alg{A}_{\boldTheta'_{n}}$ and the product of the valuations $v_{\boldTheta'_{n}}$ would witness that
\begin{align*}
  \class{K} \nvDash \exists \tuple{z} \bigwedge \boldTheta_{n}(x_{1}, \dots, x_{n}, y, \tuple{z}) \implies \bigvee_{\boldTheta'_{n} \in \boldPsi'_{n}} \left( \exists \tuple{z} \bigwedge \boldTheta'_{n}(x_{1}, \dots, x_{n}, y, \tuple{z}) \right),
\end{align*}
  contradicting the fact that $\boldPsi$ and $\boldPsi'$ are parametrized local EDCFs.
\end{proof}

\begin{theorem} \label{thm: removing locality}
  Suppose that $\logic{L}$ has the (parametrized) local EDCF($\boldPsi$) on $\class{K}$. If $\logic{L}$ in addition has a (parametrized) EDCF, then it has a (parametrized) EDCF($\boldTheta$) on $\class{K}$ for some $\boldTheta$ such that $\boldTheta_{n} \in \boldPsi_{n}$ for each $n \in \omega$.
\end{theorem}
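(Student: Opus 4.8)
The plan is to derive the statement directly from Fact~\ref{fact: edcf unique}. Let $\boldTheta' = (\boldTheta'_n)_{n \in \omega}$ be the (parametrized) EDCF whose existence we assume. By the very definition of a (parametrized) global equational definition, the sequence $\boldPsi' = (\{ \boldTheta'_n \})_{n \in \omega}$ of singleton families is then a (parametrized) local EDCF for $\logic{L}$ on $\class{K}$: the single-element family $\{ \boldTheta'_n \}$ defines the $n$-generated filters. Since $\class{K}$ is a subdirect class (being, under our standing assumption, a prevariety containing $\Alg \logic{L}$, and every prevariety is closed under subdirect products), both $\boldPsi$ and $\boldPsi'$ are (parametrized) local EDCFs for $\logic{L}$ on $\class{K}$, and hence Fact~\ref{fact: edcf unique} applies to the pair $\boldPsi, \boldPsi'$.

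The idea is then to pick, for each $n \in \omega$, a single member $\boldTheta_n \in \boldPsi_n$ that is equivalent over $\class{K}$ to $\boldTheta'_n$. Fix $n$. Applying the second matching condition of Fact~\ref{fact: edcf unique} to the unique element $\boldTheta'_n$ of $\boldPsi'_n$ produces some $\boldTheta_n \in \boldPsi_n$ with
\begin{align*}
  \class{K} \vDash \exists \tuple{z} \bigwedge \boldTheta'_n(x_1, \dots, x_n, y, \tuple{z}) \implies \exists \tuple{z} \bigwedge \boldTheta_n(x_1, \dots, x_n, y, \tuple{z}).
\end{align*}
Feeding this same $\boldTheta_n$ into the first matching condition of Fact~\ref{fact: edcf unique}, and using that $\boldPsi'_n = \{ \boldTheta'_n \}$ is a singleton so that the only available element of $\boldPsi'_n$ is $\boldTheta'_n$ itself, yields the converse
\begin{align*}
  \class{K} \vDash \exists \tuple{z} \bigwedge \boldTheta_n(x_1, \dots, x_n, y, \tuple{z}) \implies \exists \tuple{z} \bigwedge \boldTheta'_n(x_1, \dots, x_n, y, \tuple{z}).
\end{align*}
Thus the two formulas $\exists \tuple{z} \bigwedge \boldTheta_n$ and $\exists \tuple{z} \bigwedge \boldTheta'_n$ are equivalent over $\class{K}$.

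Finally I would set $\boldTheta \assign (\boldTheta_n)_{n \in \omega}$, so that $\boldTheta_n \in \boldPsi_n$ holds by construction. Since $\boldTheta'$ is a (parametrized) EDCF on $\class{K}$, for every $\alg{A} \in \class{K}$ and all $a_1, \dots, a_n, b \in \alg{A}$ we have $b \in \Fg^{\alg{A}}_{\logic{L}}(a_1, \dots, a_n)$ if and only if $\alg{A} \vDash \exists \tuple{z} \bigwedge \boldTheta'_n(a_1, \dots, a_n, b, \tuple{z})$; combining this with the $\class{K}$-equivalence of $\boldTheta_n$ and $\boldTheta'_n$ just established shows that $\boldTheta$ is a (parametrized) EDCF on $\class{K}$, as required. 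The plain (non-parametrized) local-versus-global form of the theorem is the special case in which the parameter tuple $\tuple{z}$ is empty and each $\exists \tuple{z}$ is vacuous, so the identical argument applies verbatim.

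The argument carries essentially no substantive difficulty once Fact~\ref{fact: edcf unique} is available, since that fact already performs the only genuinely algebraic step, namely the product-of-witnesses construction, which uses the closure of $\class{K}$ under products. The one point demanding care is the bookkeeping just described: one must select $\boldTheta_n$ using the implication running from $\boldTheta'_n$ into $\boldPsi_n$, and then recover the reverse implication by feeding that very $\boldTheta_n$ back through the other matching condition, relying on the fact that $\boldPsi'_n$ is a singleton to force the returned set to be $\boldTheta'_n$ again. This is precisely what upgrades two separate one-directional implications into a genuine equivalence, and thereby lets a single member of $\boldPsi_n$ serve as a global definition.
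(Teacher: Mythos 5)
Your proof is correct and follows essentially the same route as the paper: both extract a single $\boldTheta_{n} \in \boldPsi_{n}$ from Fact~\ref{fact: edcf unique} via the implication $\exists \tuple{z} \bigwedge \boldTheta'_{n} \implies \exists \tuple{z} \bigwedge \boldTheta_{n}$ over $\class{K}$. The only (harmless) difference is that you obtain the converse direction by running the other matching condition of Fact~\ref{fact: edcf unique} back through the singleton $\boldPsi'_{n}$, whereas the paper gets it for free from the hypothesis that $\boldPsi$ is already a (parametrized) local EDCF, so that satisfying any member of $\boldPsi_{n}$ implies membership in the generated filter.
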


\begin{proof}
  Suppose that $\logic{L}$ has a parametrized local EDCF($\boldPsi$) and a parametrized EDCF($\boldTheta'$) on $\class{K}$. The previous fact ensures that there is some $\boldTheta \in \boldPsi$ such that
\begin{align*}
  \class{K} \vDash \exists \tuple{z} \bigwedge \boldTheta'_{n}(x_{1}, \dots, x_{n}, y, \tuple{z}) \implies \exists \tuple{z} \bigwedge \boldTheta_{n}(x_{1}, \dots, x_{n}, y, \tuple{z}).
\end{align*}
  If $b \in \Fg^{\alg{A}}_{\logic{L}}(a_{1}, \dots, a_{n})$ for $a_{1}, \dots, a_{n}, b \in \alg{A} \in \class{K}$, then the parametrized EDCF($\boldTheta$) yields that $\alg{A} \vDash \exists \tuple{z} \bigwedge \boldTheta'_{n}(a_{1}, \dots, a_{n}, b, \tuple{z})$, so $\alg{A} \vDash \exists \tuple{z} \bigwedge \boldTheta_{n}(a_{1}, \dots, a_{n}, b, \tuple{z})$ by the above implication. Conversely, $\alg{A} \vDash \exists \tuple{z} \bigwedge \boldTheta_{n}(a_{1}, \dots, a_{n}, b, \tuple{z})$ implies that $b \in \Fg^{\alg{A}}_{\logic{L}}(a_{1}, \dots, a_{n})$ by the parametrized local EDCF($\boldPsi$). The logic $\logic{L}$ therefore has the parametrized EDCF($\boldTheta$) on $\class{K}$. Clearly if $\boldPsi$ does not involve any parameters, then neither does $\boldTheta$.
\end{proof}

\begin{corollary} \label{cor: global edcf}
  A logic has an EDCF on a class of algebras $\class{K}$ if and only if it has both a local EDCF and a parametrized EDCF on $\class{K}$.
\end{corollary}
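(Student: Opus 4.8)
The plan is to obtain this corollary as an essentially immediate consequence of Theorem~\ref{thm: removing locality}. The forward implication requires nothing beyond unwinding the definitions. If $\logic{L}$ has the EDCF($\boldTheta$) on $\class{K}$, then I would take $\boldPsi_{n} \assign \{ \boldTheta_{n} \}$ to exhibit a local EDCF: the disjunction over the one-element family $\boldPsi_{n}$ collapses to the single conjunction $\bigwedge \boldTheta_{n}$. Simultaneously, a global equational definition is by definition a parametrized one in which no parameters happen to occur, so the same $\boldTheta$ already witnesses a parametrized EDCF. Hence a single global EDCF does double duty as both a local and a parametrized EDCF.

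For the converse I would feed the two hypothesized definitions into Theorem~\ref{thm: removing locality}. Suppose $\logic{L}$ has a local EDCF($\boldPsi$) and also a parametrized EDCF on $\class{K}$. A local EDCF is precisely a parametrized local EDCF whose family $\boldPsi$ involves no parameters, so the hypotheses of the parametrized form of Theorem~\ref{thm: removing locality} are met. The theorem then supplies a parametrized EDCF($\boldTheta$) on $\class{K}$ with $\boldTheta_{n} \in \boldPsi_{n}$ for each $n \in \omega$. The decisive observation is that since every set $\boldTheta_{n}$ is drawn from the parameter-free family $\boldPsi_{n}$, the resulting $\boldTheta$ is itself parameter-free---this is exactly the closing remark of the proof of Theorem~\ref{thm: removing locality}. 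A parametrized global definition in which no parameters occur is by definition a global equational definition, so $\boldTheta$ is a genuine (global) EDCF for $\logic{L}$ on $\class{K}$.

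I do not expect any real obstacle, as the corollary is close to a restatement of Theorem~\ref{thm: removing locality}. The only subtlety worth flagging is the bookkeeping around the two parenthetical readings of that theorem: one must apply its \emph{parametrized} form (since only a parametrized EDCF is available as the auxiliary hypothesis) while supplying a \emph{parameter-free} local EDCF as the base family, and then exploit the fact that the membership $\boldTheta_{n} \in \boldPsi_{n}$ transfers the absence of parameters from $\boldPsi$ to $\boldTheta$, delivering the desired parameter-free conclusion.
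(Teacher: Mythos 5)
Your proposal is correct and is essentially the paper's own (implicit) argument: the corollary is stated without proof as an immediate consequence of Theorem~\ref{thm: removing locality}, with the forward direction being definitional and the converse obtained exactly as you describe, by applying the parametrized form of that theorem to a parameter-free family $\boldPsi$ and invoking the closing remark of its proof that $\boldTheta_{n} \in \boldPsi_{n}$ forces $\boldTheta$ to be parameter-free as well.
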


  We now move on to parametrized EDCFs, where the general sequence $\boldPsi$ of families of sets of formulas $\boldPsi_{n}(x_{1}, \dots, x_{n}, y, \tuple{z})$ is replaced by a sequence $\boldTheta$ of sets of formulas $\boldTheta_{n}(x_{1}, \dots, x_{n}, y, \tuple{z})$. We now show under what conditions a parametrized local EDCF can be pared down to a parametrized EDCF.

\begin{definition}
  A logic $\logic{L}$ is said to have \emph{absolute factor determined compact filters} on a class of algebras $\class{K}$ if for each family of algebras $\alg{A}_{i} \in \class{K}$ with $i \in I$ and each ${a_{1}, \dots, a_{n} \in \alg{A}} \assign \prod_{i\in I} \alg{A}_{i}$ with $n \in \omega$
\begin{align*}
  \Fg^{\alg{A}}_{\logic{L}} (a_{1}, \dots, a_{n}) = \prod_{i \in I} \Fg^{\alg{A}_{i}}_{\logic{L}}(\pi_{i}(a_{1}), \dots, \pi_{i}(a_{n})),
\end{align*}
  where $\pi_{i}\colon \alg{A} \to \alg{A}_{i}$ are the projection maps.
\end{definition}

\begin{definition}
  A \emph{$n$-test algebra for compact filters} of $\logic{L}$ on a class $\class{K}$ for $n \in \omega$ is an algebra $\alg{A}_{n} \in \class{K}$ with elements $p_{1}, \dots, p_{n}, q \in \alg{A}_{n}$ (called \emph{test elements}) such that for each $\alg{A} \in \class{K}$
\begin{align*}
  b \in \Fg^{\alg{A}}_{\logic{L}}(a_{1}, \dots, a_{n}) & \iff \text{there is a homomorphism } h\colon \alg{A}_{n} \to \alg{A} \text{ with } \\
  & \phantom{\iff} ~ h(p_{1}) = a_{1}, \dots, h(p_{n}) = a_{n}, \text{ and } h(q) = b.
\end{align*}
  Equivalently, $q \in \Fg^{\alg{A}_{n}}_{\logic{L}}(p_{1}, \dots, p_{n})$ and moreover $b \in \Fg^{\alg{A}}_{\logic{L}}(a_{1}, \dots, a_{n})$ implies the existence of the above homomorphism $h$. A logic $\logic{L}$ \emph{has test algebras for compact filters} on $\class{K}$ if it has an $n$-test algebra on $\class{K}$ for all~$n \in \omega$.
\end{definition}

\begin{theorem} \label{thm: pedcf} \label{thm: pedfc equiv}
  Let $\logic{L}$ be a logic which has the parametrized local EDCF($\boldPsi$) on a subdirect class $\class{K}$. Then the following are equivalent:
\begin{enumerate}[(i)]
\item $\logic{L}$ has the parametrized EDCF($\boldTheta$) on $\class{K}$ for some $\boldTheta$ with $\boldTheta_{n} \in \boldPsi_{n}$.
\item $\logic{L}$ has a parametrized EDCF on $\class{K}$.
\item $\logic{L}$ has absolute factor determined compact filters on $\class{K}$.
\item $\logic{L}$ has test algebras for compact filters with respect to $\class{K}$.
\end{enumerate}
\end{theorem}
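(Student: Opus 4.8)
The plan is to organize the four conditions around~(ii) as a hub, proving $\text{(i)}\Leftrightarrow\text{(ii)}$ and then $\text{(ii)}\Rightarrow\text{(iii)}\Rightarrow\text{(i)}$ and $\text{(ii)}\Rightarrow\text{(iv)}\Rightarrow\text{(ii)}$. The equivalence $\text{(i)}\Leftrightarrow\text{(ii)}$ costs nothing: $\text{(i)}\Rightarrow\text{(ii)}$ is trivial and $\text{(ii)}\Rightarrow\text{(i)}$ is exactly Theorem~\ref{thm: removing locality}, so once a parametrized EDCF is available I may always take $\boldTheta_n\in\boldPsi_n$. Two general observations do most of the work. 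First, for any set of equations $\boldTheta_n$ the formula $\exists\tuple{z}\bigwedge\boldTheta_n$ is primitive positive, hence preserved by homomorphisms and, in a full direct product, satisfied exactly when it is satisfied in every factor (a witness in $\prod_{i\in I}\alg{A}_i$ restricts coordinatewise, and coordinatewise witnesses assemble into one product witness). Since $\class{K}$ is a subdirect class it is closed under products (a full product is a subdirect product of its factors), so the EDCF applies to such products. Second, the relative congruence $\Cg^{\Fm}_{\class{K}}\boldTheta_n$ is the intersection $\bigcap\Ker w$ over all surjections $w\colon\Fm\twoheadrightarrow\alg{C}$ with $\alg{C}\in\class{K}$ and $\boldTheta_n\subseteq\Ker w$; this is a $\class{K}$-congruence because $\Fm/\bigcap\Ker w$ is a subdirect product of the $\class{K}$-algebras $\alg{C}$.

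For $\text{(ii)}\Rightarrow\text{(iii)}$ I fix a parametrized EDCF($\boldTheta$) and a product $\alg{A}=\prod_{i\in I}\alg{A}_i$ with each $\alg{A}_i\in\class{K}$. For $\tuple{a},b\in\alg{A}$ the EDCF rewrites $b\in\Fg^{\alg{A}}_{\logic{L}}(\tuple{a})$ as $\alg{A}\vDash\exists\tuple{z}\bigwedge\boldTheta_n(\tuple{a},b,\tuple{z})$, and the product behaviour of this primitive positive formula turns this into the assertion that $\pi_i(b)\in\Fg^{\alg{A}_i}_{\logic{L}}(\pi_i\tuple{a})$ for every $i$, which is precisely absolute factor determinacy. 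For $\text{(iii)}\Rightarrow\text{(i)}$ I extract, for each fixed $n$, a single complete disjunct of the parametrized local EDCF($\boldPsi$). Every $\boldTheta_n\in\boldPsi_n$ is already sound by the backward direction of the local EDCF, so it suffices to find one $\boldTheta_n\in\boldPsi_n$ that fires whenever $b\in\Fg^{\alg{A}}_{\logic{L}}(\tuple{a})$. If none did, then for each $\boldTheta\in\boldPsi_n$ I could pick $\alg{A}_{\boldTheta}\in\class{K}$ with $\tuple{a}^{\boldTheta},b^{\boldTheta}$ such that $b^{\boldTheta}\in\Fg^{\alg{A}_{\boldTheta}}_{\logic{L}}(\tuple{a}^{\boldTheta})$ but $\alg{A}_{\boldTheta}\nvDash\exists\tuple{z}\bigwedge\boldTheta(\tuple{a}^{\boldTheta},b^{\boldTheta},\tuple{z})$. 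Forming $\alg{A}\assign\prod_{\boldTheta\in\boldPsi_n}\alg{A}_{\boldTheta}\in\class{K}$ with product elements $\tuple{a},b$, absolute factor determinacy yields $b\in\Fg^{\alg{A}}_{\logic{L}}(\tuple{a})$, so the local EDCF makes some $\boldTheta^{*}\in\boldPsi_n$ fire on $\alg{A}$; projecting onto the $\boldTheta^{*}$-coordinate preserves this primitive positive formula and contradicts the choice of $\alg{A}_{\boldTheta^{*}}$. Assembling the resulting $\boldTheta_n$ gives $\boldTheta=(\boldTheta_n)_{n\in\omega}$.

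The direction $\text{(iv)}\Rightarrow\text{(ii)}$ is the cleanest: given an $n$-test algebra $\alg{A}_n\in\class{K}$ with test elements $p_1,\dots,p_n,q$, choose a surjection $g\colon\Fm\twoheadrightarrow\alg{A}_n$ sending $x_i\mapsto p_i$, $y\mapsto q$, and a tuple $\tuple{z}$ of further variables onto the remaining elements of $\alg{A}_n$, and let $\boldTheta_n$ be the \emph{entire} kernel, i.e.\ all equations $s\equals t$ with $g(s)=g(t)$. Then $\alg{A}\vDash\exists\tuple{z}\bigwedge\boldTheta_n(\tuple{a},b,\tuple{z})$ holds exactly when some valuation $v\colon\Fm\to\alg{A}$ with $v(x_i)=a_i$, $v(y)=b$ has $\Ker g\subseteq\Ker v$, equivalently when $v$ factors as $h\circ g$ for a homomorphism $h\colon\alg{A}_n\to\alg{A}$, and the defining property of the test algebra reads this off as $b\in\Fg^{\alg{A}}_{\logic{L}}(\tuple{a})$ in both directions; using the full kernel makes the factoring a bare application of the homomorphism theorem, so the argument is insensitive to whether $\class{K}$ is closed under subalgebras. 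For $\text{(ii)}\Rightarrow\text{(iv)}$ I pass to the canonical model of $\exists\tuple{z}\bigwedge\boldTheta_n$: set $\alg{A}_n\assign\Fm/\Cg^{\Fm}_{\class{K}}\boldTheta_n\in\class{K}$ with $p_i\assign x_i/\Cg^{\Fm}_{\class{K}}\boldTheta_n$ and $q\assign y/\Cg^{\Fm}_{\class{K}}\boldTheta_n$. Since $\boldTheta_n$ holds in $\alg{A}_n$ under these assignments, the EDCF gives $q\in\Fg^{\alg{A}_n}_{\logic{L}}(p_1,\dots,p_n)$ at once, and for universality the EDCF supplies, for each $b\in\Fg^{\alg{A}}_{\logic{L}}(\tuple{a})$, a valuation $v\colon\Fm\to\alg{A}$ satisfying $\boldTheta_n$ with $v(x_i)=a_i$, $v(y)=b$, which must be factored through $\alg{A}_n$.

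I expect this last factoring to be the main obstacle, and it is genuinely where working over a mere subdirect class (rather than a prevariety) bites: the factoring needs $\Cg^{\Fm}_{\class{K}}\boldTheta_n\subseteq\Ker v$, yet although $\Cg^{\Fm}_{\class{K}}\boldTheta_n$ is the intersection of the kernels of all \emph{surjections} onto $\class{K}$-algebras extending $\boldTheta_n$, the witness $v$ need not be surjective and its image $v[\Fm]$ need not lie in $\class{K}$, so $v$ is not visibly among those maps. To overcome this I would combine the intersection characterization with the compatibility of the smallest $\class{K}$-congruence $\theta^{\Fm}_{\class{K}}$ with all $\logic{L}$-filters (already exploited in Fact~\ref{fact: alternative definition of edcf}) and, when $\class{K}$ is not closed under subalgebras, replace the single canonical model by the product of the bounded-size $\class{K}$-quotients $\Fm/\theta$ with $\theta$ a $\class{K}$-congruence satisfying $y/\theta\in\Fg^{\Fm/\theta}_{\logic{L}}(\tuple{x}/\theta)$, so that the finitarity of $\logic{L}$ reduces an arbitrary instance to such a bounded quotient through which it factors coordinatewise. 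The interplay between relative congruence generation and non-surjective witnesses over a subdirect class is the one delicate point; everything else is bookkeeping around the product behaviour of primitive positive formulas.
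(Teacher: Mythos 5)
Your handling of (i), (ii), and (iii) is correct, and your route differs from the paper's in a worthwhile way: the paper reaches (i) from (iii) only via (iv) (the chain (ii)$\Rightarrow$(iii)$\Rightarrow$(iv)$\Rightarrow$(i)), whereas your (iii)$\Rightarrow$(i) is direct --- take one counterexample algebra per $\boldTheta\in\boldPsi_{n}$, form the product (legitimate, since a full product is a subdirect product and $\class{K}$ is a subdirect class), use absolute factor determinacy to put $b$ in the generated filter of the product, and project the primitive positive formula supplied by the local EDCF back onto the offending coordinate. That argument is sound and shorter than the paper's for this portion of the theorem.

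The gaps are both in your treatment of (iv). For (iv)$\Rightarrow$(ii) you take $\boldTheta_{n}$ to be the entire kernel of a surjection $g\colon\Fm\twoheadrightarrow\alg{A}_{n}$, but nothing in the definition of a test algebra bounds the size of $\alg{A}_{n}$, while the parameters $\tuple{z}$ of a parametrized EDCF must be drawn from the fixed set of variables; such a surjection, and hence such a $\boldTheta_{n}$, need not exist. The paper's (iv)$\Rightarrow$(i) avoids this entirely by exploiting the hypothesis you never use here: since $q\in\Fg^{\alg{A}_{n}}_{\logic{L}}(p_{1},\dots,p_{n})$ and $\alg{A}_{n}\in\class{K}$, the \emph{given} parametrized local EDCF($\boldPsi$) already yields some $\boldTheta_{n}\in\boldPsi_{n}$ and a witness tuple $\tuple{r}\in\alg{A}_{n}$ with $\alg{A}_{n}\vDash\boldTheta_{n}(p_{1},\dots,p_{n},q,\tuple{r})$, and this single $\boldTheta_{n}$, whose parameters are already legitimate variables, is simply transported along the test homomorphism. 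For (ii)$\Rightarrow$(iv) you concede the key step yourself: a non-surjective witness $v$ need not factor through $\Fm/\Cg^{\Fm}_{\class{K}}\boldTheta_{n}$ because $v[\Fm]$ need not lie in $\class{K}$, and the proposed repair is only a gesture. The paper instead proves (iii)$\Rightarrow$(iv): with $\kappa$ equal to $\aleph_{0}$ plus the length of $\tuple{z}$, it takes the product of representatives of all isomorphism classes of pointed structures $\langle\alg{A},a_{1},\dots,a_{n},b\rangle$ with $\alg{A}\in\class{K}$ generated by at most $\kappa$ elements and $b\in\Fg^{\alg{A}}_{\logic{L}}(a_{1},\dots,a_{n})$; condition (iii) makes $q\in\Fg^{\alg{A}_{n}}_{\logic{L}}(\tuple{p})$ in that product, and the parametrized local EDCF is what cuts an arbitrary instance down to a $\kappa$-generated subalgebra occurring in the index set, whose projection is the required homomorphism. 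Your closing suggestion of a product of bounded $\class{K}$-quotients of $\Fm$ is in this spirit, but the reduction of an arbitrary instance to a factor of that product is exactly the step you flag as delicate and do not carry out. So (i)--(iii) are established, but the equivalence with (iv) is not.
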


\begin{proof}
  (i)$\Rightarrow$(ii): trivial. (ii)$\Rightarrow$(iii): suppose that $\logic{L}$ has the parametrized EDCF($\boldTheta$) on $\class{K}$. Consider a family of algebras $\alg{A}_{i} \in \class{K}$ for $i \in I$. Take $\alg{A} \assign \prod_{i \in I} \alg{A}_{i}$ and let $\pi_{i}\colon \alg{A} \to \alg{A}_{i}$ be the projection maps. For all $a_{1}, \dots, a_{n} \in \alg{A}$ the inclusion $\Fg^{\alg{A}}(a_{1}, \dots, a_{n}) \subseteq \prod_{i \in I} \Fg^{\alg{A}_{i}}_{\logic{L}}(\pi_{i}(a_{1}), \dots, \pi_{i}(a_{n}))$ holds for any logic. Conversely, take $b \in \prod_{i \in I} \Fg^{\alg{A}_{i}}_{\logic{L}} (\pi_{i}(a_{1}), \dots, \pi_{i}(a_{n}))$. That is, $\pi_{i}(b) \in \Fg^{\alg{A}_{i}}_{\logic{L}}(\pi_{i}(a_{1}), \dots, \pi_{i}(a_{n})$ for each $i \in I$. The parametrized EDCF($\boldTheta$) now yields a tuple $\tuple{c}_{i} \in \alg{A}_{i}$ for each $i \in I$ such that ${\alg{A}_{i} \vDash \boldTheta_{n}(\pi_{i}(a_{1}), \dots, \pi_{i}(a_{n}), \pi_{i}(b), \tuple{c}_{i})}$ for each $i \in I$. Taking $\tuple{c} \in \alg{A}$ to be the unique tuple such that $\pi_{i}(\tuple{c}) = \tuple{c}_{i}$ for each $i \in I$, we obtain that $\alg{A}_{i} \vDash \boldTheta_{n}(\pi_{i}(a_{1}), \dots, \pi_{i}(a_{n}), \pi_{i}(b), \pi_{i}(\tuple{c}))$ for each $i \in I$, so $\alg{A} \vDash \boldTheta_{n}(a_{1}, \dots, a_{n}, b, \tuple{c})$. The parametrized EDCF($\boldTheta$) now yields that $b \in \Fg^{\alg{A}}_{\logic{L}}(a_{1}, \dots, a_{n})$.

  (iii)$\Rightarrow$(iv): the proof of this implication is essentially identical to the proof of \cite[Theorem~2.4.1, (ii)$\Rightarrow$(viii)]{czelakowski01}. Let $\kappa$ be the $\aleph_{0}$ plus cardinality of the tuple of parameters $\tuple{z}$ associated with the parametrized local EDCF~$\boldPsi$. Consider all tuples $\langle \alg{A}, a_{1}, \dots, a_{n}, b \rangle$ where $\alg{A} \in \class{K}$ is an algebra with $a_{1}, \dots, a_{n}, b \in \alg{A}$ such that $b \in \Fg^{\alg{A}}_{\logic{L}} (a_{1}, \dots, a_{n})$ and $\alg{A}$ is generated by a set of cardinality at most $\kappa$. Up to isomorphism, there is only a set of such structures $\mathbb{A} \assign \langle \alg{A}, a_{1}, \dots, a_{n}, b \rangle$. Let $S$ be a set which contains at least one representative of each isomorphism class. Take $\alg{A}_{n} \assign \prod_{\mathbb{A} \in S} \alg{A}$, and let $\pi_{\mathbb{A}}\colon \alg{A}_{n} \to \alg{A}$ be the projection maps. Then take $p_{i}, q \in \alg{A}_{n}$ such that $\pi_{\mathbb{A}}(p_{1}) = a_{1}, \dots, \pi_{\mathbb{A}}(p_{n}) = a_{n}, \pi_{\mathbb{A}}(q) = b$ for each structure $\mathbb{A} = \langle \alg{A}, a_{1}, \dots, a_{n}, b \rangle \in S$.

  We claim that $\alg{A}_{n}$ is an $n$-test algebra whose test elements are $p_{1}, \dots, p_{n}, q$. By construction $\pi_{\mathbb{A}}(q) \in \Fg^{\alg{A}}_{\logic{L}} (\pi_{\mathbb{A}}(p_{1}), \dots, \pi_{\mathbb{A}}(p_{n}))$, so by (iii) we can infer that $q \in \Fg^{\alg{A}_{n}}_{\logic{L}} (p_{1}, \dots, p_{n})$. If $b \in \Fg^{\alg{A}}_{\logic{L}} (a_{1}, \dots, a_{n})$, then by the parametrized \mbox{local} EDCF($\boldPsi$) there is a subalgebra $\alg{B}$ of $\alg{A}$ generated by a set of cardinality at most $\kappa$ such that $a_{1}, \dots, a_{n}, b \in \alg{B}$ and $b \in \Fg^{\alg{B}}_{\logic{L}} (a_{1}, \dots, a_{n})$. By the definition of $S$ we have ${\langle \alg{B}, a_{1}, \dots, a_{n}, b \rangle \in S}$, so the projection map is the required homomorphism $h\colon \alg{A}_{n} \to \alg{B}$ with $h(p_{1}) = a_{1}$, \dots, $h(p_{n}) = a_{n}$, $h(q) = b$.

  (iv)$\Rightarrow$(i): Suppose that for each $n \in \omega$ there is an $n$-test algebra $\alg{A}_{n}$ for $\logic{L}$ with respect to $\class{K}$, with test elements $p_{1}, \dots, p_{n}, q \in \alg{A}_{n}$. Since $q \in \Fg^{\alg{A}_{n}}_{\logic{L}}(p_{1}, \dots, p_{n})$, we obtain that $\alg{A}_{n} \vDash \boldTheta_{n}(p_{1}, \dots, p_{n}, q, \tuple{r})$ for some $\boldTheta_{n} \in \boldPsi_{n}$ and some tuple $\tuple{r} \in \alg{A}_{n}$. We claim that $\logic{L}$ has the parametrized EDCF($\boldTheta$). One direction holds trivially: if $\alg{A} \vDash \boldTheta_{n}(a_{1}, \dots, a_{n}, b, \tuple{c})$ with $\alg{A} \in \class{K}$ for some tuple $\tuple{c} \in \alg{A}$, then $b \in \Fg^{\alg{A}}_{\logic{L}}(a_{1}, \dots, a_{n})$ by the parametrized local EDCF($\boldPsi$). Conversely, suppose that $b \in \Fg^{\alg{A}}_{\logic{L}}(a_{1}, \dots, a_{n})$. Then there is a homomorphism $h\colon \alg{A}_{n} \to \alg{A}$ such that $h(p_{1}) = a_{1}, \dots, h(p_{n}) = a_{n}, h(q) = b$. But $\alg{A}_{n} \vDash \boldTheta_{n}(p_{1}, \dots, p_{n}, q, \tuple{r})$ for some $\tuple{r} \in \alg{A}_{n}$, so $\alg{A} \vDash \boldTheta_{n}(h(p_{1}), \dots, h(p_{n}), h(q), h(\tuple{r}))$ and indeed $\alg{A} \vDash \boldTheta_{n}(a_{1}, \dots, a_{n}, b, \tuple{c})$ for some tuple $\tuple{c} \in \alg{A}$.

  (The equivalence (i) $\Leftrightarrow$ (ii) was of course already proved in Theorem~\ref{thm: removing locality}. However, going through the implication (iv) $\Rightarrow$ (i) appears to be the most natural way to prove the implication (iv) $\Rightarrow$ (ii).)
\end{proof}

  The following is a less refined but more concise form of the above theorem.

\begin{theorem} \label{thm: baby pedcf}
  The following are equivalent for every logic $\logic{L}$:
\begin{enumerate}[(i)]
\item $\logic{L}$ has a parametrized EDCF on $\Alg \logic{L}$.
\item $\logic{L}$ has absolute factor determined compact filters on $\Alg \logic{L}$.
\item $\logic{L}$ has test algebras for compact filters with respect to $\Alg \logic{L}$.
\end{enumerate}
\end{theorem}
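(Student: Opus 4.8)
The plan is to obtain this statement as the special case $\class{K} \assign \Alg \logic{L}$ of Theorem~\ref{thm: pedcf}. To invoke that theorem I must verify its two standing hypotheses, namely that $\class{K}$ is a subdirect class and that $\logic{L}$ has a parametrized local EDCF($\boldPsi$) on $\class{K}$ for some family $\boldPsi$. Once these are in place, the three conditions of the present statement are simply three of the four conditions appearing in Theorem~\ref{thm: pedcf}, and the desired equivalences can be read off directly.

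The first hypothesis is immediate, since the algebraic counterpart $\Alg \logic{L}$ of any logic is a subdirect class, as recalled in Section~\ref{sec: preliminaries}. For the second, I would appeal to Theorem~\ref{thm: pledcf}, which furnishes a parametrized local EDCF($\boldPsi$) for $\logic{L}$ on the class of all algebras in the signature of $\logic{L}$. Because an EDCF of any of the considered forms on a class is inherited by every subclass, and $\Alg \logic{L}$ is contained in the class of all algebras, the same $\boldPsi$ is a parametrized local EDCF for $\logic{L}$ on $\Alg \logic{L}$. Thus both hypotheses of Theorem~\ref{thm: pedcf} hold with $\class{K} \assign \Alg \logic{L}$ and this $\boldPsi$.

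With the hypotheses secured, Theorem~\ref{thm: pedcf} asserts the equivalence of its conditions (i)--(iv) for $\class{K} \assign \Alg \logic{L}$. It then remains only to match these with the three conditions stated here: condition~(ii) of Theorem~\ref{thm: pedcf} (having a parametrized EDCF on $\class{K}$) is condition~(i) of the present theorem, condition~(iii) (absolute factor determined compact filters) is condition~(ii) here, and condition~(iv) (test algebras for compact filters) is condition~(iii) here. Reading off the corresponding equivalences yields the theorem; the remaining condition~(i) of Theorem~\ref{thm: pedcf}, which merely sharpens the existence of a parametrized EDCF by exhibiting a definition drawn from $\boldPsi$, plays no role at this coarser level.

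Since the whole argument is a specialization, there is essentially no genuine obstacle. The only point requiring a moment's care is the observation that the parametrized local EDCF supplied by Theorem~\ref{thm: pledcf} descends from the class of all algebras to the subclass $\Alg \logic{L}$, which is exactly what the remark on restriction to subclasses guarantees.
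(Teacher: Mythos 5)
Your proposal is correct and follows exactly the paper's own argument: invoke Theorem~\ref{thm: pledcf} to obtain a parametrized local EDCF on the class of all algebras, restrict it to $\Alg \logic{L}$ (which is a subdirect class), and then read off the equivalences from Theorem~\ref{thm: pedcf}. The extra care you take in checking the hypotheses is sound but adds nothing beyond what the paper's two-line proof already relies on.
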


\begin{proof}
  By Theorem~\ref{thm: pledcf} each logic $\logic{L}$ has a parametrized local EDCF on the class of all algebras. The equivalence then follows from the previous theorem.
\end{proof}

  By way of illustration, let us use the above theorem to show that the local modal logic $\LocK$ and the paraconsistent weak Kleene logic $\PWK$ do not have a parametrized EDCF on their algebraic counterparts.

  We have already seen that $\LocK$ has a local EDCF on its algebraic counterpart, which is the variety $\BAO$ of Boolean algebras with an operator.

\begin{example}
  $\LocK$ does not have a parametrized EDCF on $\Alg \LocK = \BAO$.
\end{example}

\begin{proof}
  The filters of $\LocK$ on such algebras are the non-empty lattice filters closed under $\Box$, so for $a_{1}, \dots, a_{n}, b \in \alg{A} \in \BAO$
\begin{align*}
  b \in \Fg^{\alg{A}}_{\LocK}(a_{1}, \dots, a_{n}) \iff \alg{A} \vDash \Box_{k} (a_{1} \wedge \dots \wedge a_{n}) \leq b \text{ for some } k \in \omega,
\end{align*}
  where $\Box_{0} x \assign x$ and $\Box_{i+1} x \assign x \wedge \Box \Box_{i} x$. For each $n$ there is some $\alg{A}_{n} \in \BAO$ and some $a_{n} \in \alg{A}_{n}$ such that $\Box_{n} a_{n} < \Box_{n-1} a_{n}$. Take $\alg{A} \assign \prod_{n \in \omega} \alg{A}_{n}$ and $a \assign \prod_{n \in \omega} a_{n}$. Let $b \in \alg{A}$ be the tuple such that $b_{n} = \Box_{n} a_{n}$ for each $n \in \omega$. Then $b \in \prod_{n \in \omega} \Fg^{\alg{A}_{n}}_{\LocK} b_{n}$, but $b \notin \Fg^{\alg{A}}_{\LocK} a$: otherwise there would be some $k \in \omega$ such that $\Box_{k} a \leq b$, but then $\Box_{k} a_{k+1} \leq b_{k+1} = \Box_{k+1} a_{k+1}$, contradicting the assumption that $\Box_{k+1} a_{k+1} < \Box_{k} a_{k+1}$.

  (Alternatively, one may apply Theorem~\ref{thm: removing locality}: if $\LocK$ had a parametrized EDCF, it would have a global EDCF $\boldTheta$ where $\boldTheta_{1}(x_{1}, y) \assign \{ \Box_{k} x_{1} \leq y \}$ for some $k \in \omega$. But this is directly contradicted by the fact that $\Box_{k+1} a_{k+1} \in \Fg^{\alg{A}_{k+1}}_{\LocK} (a_{k+1})$ but $\Box_{k} a_{k+1} \nleq \Box_{k+1} a_{k+1}$.)
\end{proof}

  Paraconsistent weak Kleene logic $\PWK$, studied in detail in~\cite[Chapter~7]{bonzio+paoli+prabaldi22}, is the logic determined by the three-element matrix $\langle \WKthree, \{ 1, \onehalf \} \rangle$, where $\WKthree \assign \langle \{ 0, 1, \onehalf \}, \vee, \neg \rangle$ with
\begin{align*}
  & \neg 0 \assign 1, & & \neg 1 \assign 0, & & \neg \onehalf \assign \onehalf,
\end{align*}
  and with $\vee$ defined as the binary join operation in the chain $0 < 1 < \onehalf$. Sometimes the operation $\wedge$ defined as the binary meet operation in the (different!) three-element chain $\onehalf < 0 < 1$ is added to the signature of $\WKthree$, but this operation is definable as $x \wedge y \assign \neg (\neg x \vee \neg y)$. The values $1$ and $0$ correspond to the true and false values of classical logic, while the value $\onehalf$ is infectious in the sense that if any subformula of a formula is evaluated to $\onehalf$, then so is the entire formula.

  The variety generated by $\WKthree$ is called the variety of \emph{generalized involutive bisemilattices} and denoted by $\GIB$~\cite[Proposition~2.4.22]{bonzio+paoli+prabaldi22}. The algebraic counterpart of $\PWK$ is the quasivariety generated by $\WKthree$~\cite[Corollary~7.1.16 and Theorem~7.1.19]{bonzio+paoli+prabaldi22}, which is axiomatized relative to $\GIB$ by the quasi-equation
\begin{align*}
  x \equals \neg x ~ \& ~ y \equals \neg y \implies x \equals y.
\end{align*}

  Part of the interest of the logic $\PWK$ stems from the fact that it equivalently arises as the so-called left variable inclusion companion of classical logic~$\CL$:
\begin{align*}
  \Gamma \vdash_{\PWK} \varphi \iff \Delta \vdash_{\CL} \varphi \text{ for some } \Delta \subseteq \Gamma \text{ such that } \Var(\Delta) \subseteq \Var(\varphi),
\end{align*}
  where $\Var(\Delta)$ and $\Var(\varphi)$ are the sets of variables which occur in the set $\Delta$ and in the formula $\varphi$. One can infer from this that $\PWK$ is not protoalgebraic.

\begin{example} \label{example: pwk does not have pedcf}
  $\PWK$ does not have a parametrized EDCF on $\Alg \PWK$.
\end{example}

\begin{proof}
  Consider the map $h\colon \WKthree \times \WKthree \to \WKthree$ defined as follows:
\begin{align*}
  h(\pair{a}{b}) \assign
  \begin{cases}
    \onehalf & \text{ if either } a = \onehalf \text{ or } b = \onehalf, \\
    1 & \text{ if } \pair{a}{b} = \pair{1}{1} \text{ or } \pair{a}{b} = \pair{0}{1}, \\
    0 & \text{ if } \pair{a}{b} = \pair{0}{0} \text{ or } \pair{a}{b} = \pair{1}{0}.
  \end{cases}
\end{align*}
  This map is easily seen to be a homomorphism. Then the set $F \assign h^{-1}[\{ 1, \onehalf \}]$ is a filter of $\PWK$ on $\WKthree \times \WKthree$, due to being a homomorphic preimage of a filter of $\PWK$ on $\WKthree$. But $\pair{\onehalf}{0} \in F$ and $\pair{1}{0} \notin F$, so $\pair{1}{0} \notin \Fg^{\WKthree \times \WKthree}_{\PWK} (\pair{\onehalf}{0})$. On the other hand, $1 \in \Fg^{\WKthree}(\onehalf)$ and $0 \in \Fg^{\WKthree}(0)$, so $\pair{1}{0} \in \Fg^{\WKthree}(\onehalf) \times \Fg^{\WKthree}(0)$. This shows that $\Fg^{\WKthree \times \WKthree}_{\PWK} (\pair{\onehalf}{0}) \neq \Fg^{\WKthree}(\onehalf) \times \Fg^{\WKthree}(0)$, hence $\PWK$ does not have factor determined compact filters. Consequently, by Theorem~\ref{thm: baby pedcf} the logic $\PWK$ does not have a parametrized EDCF on $\Alg \logic{L}$.
\end{proof}

  The parametrized EDCF arises naturally when one studies the definability of ideals in algebras. Indeed, a variant of this notion, namely the parametrized equational definability of principal ideals, was already investigated by Aglian\`{o} and Ursini~\cite{agliano+ursini97} in the more restricted setting of subtractive varieties. Theorem~\ref{thm: baby pedcf} echoes~Proposition~2.5 and Theorem~2.6 of~\cite{agliano+ursini97}.

  The above characterization of the parametrized EDCF is, unsurprisingly, reminiscent of the known characterization of the parametrized DDT (Theorem~2.4.1 of Czelakowski~\cite{czelakowski01}). In particular, the properties of having absolute factor determined compact filters and test algebras are similar to but distinct from the properties of having factor determined compact filters and test matrices considered in~\cite{czelakowski01,agliano+ursini97}.\footnote{To be more precise about the terminology, Czelakowski~\cite[p.~140]{czelakowski01} talks about factor determined finitely generated filters and property (F), respectively, while Agliano and Ursini~\cite[p.~365]{agliano+ursini97} talk about test algebras for principal ideals and factorable principal ideals on direct products.} We shall discuss the relationship between these properties in more detail in Section~\ref{sec: edcf and ddt}.

  The parametrized EDCF on its own (without the global EDCF) is not a condition typical of the best-known examples on non-classical logics. Since, as we shall see (Theorem~\ref{thm: pddt = pedcf}), the parametrized EDCF and the parametrized DDT coincide for weakly algebraizable logics, we can recycle the logic of ideals of commutative unital rings, which served as an example of a logic with the parametrized (but not the global) DDT in~\cite[p.~143]{czelakowski01}, as an example of a logic with the parametrized (but not the global) EDCF.

\section{Global and local EDCF}
\label{sec: ledcf and edcf}

  In this section, we characterize which logics admit a local and a global EDCF without parameters. Given that every logic has a parametrized local EDCF and parametrized global EDCFs are not typical of the best-known families of non-classical logics, these two characterizations can be counted as the central results of the present paper.

  Recall that $\theta^{\alg{A}}_{\class{K}}$ denotes the smallest $\class{K}$-congruence on $\alg{A}$. Observe that since $\Alg \logic{L}$ is closed under isomorphic images and subdirect products, the prevariety generated by $\Alg \logic{L}$ is simply $\AlgS(\Alg \logic{L})$.

\begin{definition}
  A logic $\logic{L}$ enjoys the \emph{absolute filter extension property} on a prevariety $\class{K}$, or the \emph{absolute FEP on $\class{K}$} for short, if for all algebras $\alg{A} \leq \alg{B} \in \class{K}$ every $\logic{L}$-filter $F$ on $\alg{A}$ is the restriction to $\alg{A}$ of some $\logic{L}$-filter $G$ on $\alg{B}$, i.e.\ $F = G \cap \alg{A}$, or equivalently if for all algebras $\alg{A} \leq \alg{B} \in \class{K}$ every compact $\logic{L}$-filter $F$ on $\alg{A}$ is the restriction to $\alg{A}$ of some compact $\logic{L}$-filter $G$ on $\alg{B}$.
\end{definition}

  If $F$ is the restriction of some filter on $G$ to $\alg{A}$, then in particular it is the restriction of $\Fg^{\alg{B}}_{\logic{L}} F$ to $\alg{A}$, which accounts for the equivalence of the two conditions in the above definition, and also for the equivalence of the first two condition in the following fact.

\begin{fact}
  The following are equivalent for each prevariety $\class{K} \supseteq \AlgS(\Alg \logic{L})$:
\begin{enumerate}[(i)]
\item $\logic{L}$ enjoys the absolute FEP on $\class{K}$.
\item If $\alg{A} \leq \alg{B} \in \class{K}$, then $\Fg^{\alg{A}}_{\logic{L}} (a_{1}, \dots, a_{n}) = \alg{A} \cap \Fg^{\alg{B}}_{\logic{L}} (a_{1}, \dots, a_{n})$.
\item If $\alg{A} \leq \alg{B}$ and $\theta^{\alg{A}}_{\class{K}} = \alg{A}^{2} \cap \theta^{\alg{B}}_{\class{K}}$, then $\Fg^{\alg{A}}_{\logic{L}} (a_{1}, \dots, a_{n}) = \alg{A} \cap \Fg^{\alg{B}}_{\logic{L}} (a_{1}, \dots, a_{n})$.
\end{enumerate}
\end{fact}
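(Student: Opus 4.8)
The plan is to prove the cycle of implications (i) $\Rightarrow$ (ii) $\Rightarrow$ (iii) $\Rightarrow$ (i). As the paper already notes, the equivalence of (i) and (ii) follows from the remark immediately preceding the statement: if every $\logic{L}$-filter extends, then in particular the compact filter $\Fg^{\alg{A}}_{\logic{L}}(a_{1}, \dots, a_{n})$ extends, and the \emph{smallest} filter on $\alg{B}$ restricting to it is $\Fg^{\alg{B}}_{\logic{L}} F$; conversely the displayed equation in (ii) is exactly the statement that the compact filter generated on $\alg{A}$ is the trace of the compact filter generated on $\alg{B}$. So the real content is the equivalence of (ii) and (iii).

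**The easy direction.** The implication (ii) $\Rightarrow$ (iii) should be essentially immediate: (iii) is just (ii) restricted to those subalgebra inclusions $\alg{A} \leq \alg{B}$ which additionally satisfy the congruence-trace condition $\theta^{\alg{A}}_{\class{K}} = \alg{A}^{2} \cap \theta^{\alg{B}}_{\class{K}}$. Since (ii) asserts the filter-trace equation for \emph{all} inclusions in $\class{K}$, it certainly holds for this restricted class, so (ii) $\Rightarrow$ (iii) is a trivial weakening.

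**The hard direction.** The substantive work is (iii) $\Rightarrow$ (ii), and I expect this to be the main obstacle. The difficulty is that an arbitrary inclusion $\alg{A} \leq \alg{B} \in \class{K}$ need not satisfy the congruence-trace hypothesis of (iii), so we cannot apply (iii) directly. The natural strategy is a quotient argument: pass to $\alg{B}' \assign \alg{B} / \theta^{\alg{B}}_{\class{K}}$, which lies in $\class{K}$ since $\theta^{\alg{B}}_{\class{K}}$ is a $\class{K}$-congruence (here we use that $\class{K} \supseteq \AlgS(\Alg \logic{L})$ is a prevariety, so it is closed under the relevant subalgebras). Let $\alg{A}'$ be the image of $\alg{A}$ in $\alg{B}'$, so $\alg{A}' = \alg{A} / (\alg{A}^{2} \cap \theta^{\alg{B}}_{\class{K}})$ and $\alg{A}' \leq \alg{B}'$. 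One would then verify that this new inclusion \emph{does} satisfy the trace condition of (iii) at the level of $\class{K}$-congruences, namely $\theta^{\alg{A}'}_{\class{K}} = (\alg{A}')^{2} \cap \theta^{\alg{B}'}_{\class{K}}$, both being the identity congruence on the respective quotients; this is where the compatibility properties of $\theta^{\alg{A}}_{\class{K}}$ with $\logic{L}$-filters (exploited in the proof of Fact~\ref{fact: alternative definition of edcf}) will do the work. Applying (iii) to the inclusion $\alg{A}' \leq \alg{B}'$ yields the filter-trace equation downstairs, and the final step is to pull this equation back up along the quotient maps $\alg{A} \to \alg{A}'$ and $\alg{B} \to \alg{B}'$, using that compact filter generation commutes with the quotient by a $\class{K}$-congruence (as established in the proof of Fact~\ref{fact: alternative definition of edcf}, $b \in \Fg^{\alg{A}}_{\logic{L}}(a_{1}, \dots, a_{n})$ iff $b / \theta^{\alg{A}}_{\class{K}} \in \Fg^{\alg{A} / \theta^{\alg{A}}_{\class{K}}}_{\logic{L}}(a_{1} / \theta^{\alg{A}}_{\class{K}}, \dots)$). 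The delicate point to check carefully is the compatibility of the two quotients so that the preimages of filters match up on the nose; this bookkeeping between the congruences on $\alg{A}$, $\alg{B}$, and their quotients is where I expect the argument to require the most care.
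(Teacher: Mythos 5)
Your reading of condition (iii) is off, and this inverts which implication is trivial and which carries the content. In (iii) the algebras $\alg{A} \leq \alg{B}$ are \emph{arbitrary} --- there is no requirement that $\alg{B} \in \class{K}$ --- and the congruence-trace condition $\theta^{\alg{A}}_{\class{K}} = \alg{A}^{2} \cap \theta^{\alg{B}}_{\class{K}}$ replaces, rather than supplements, the membership hypothesis of (ii). When $\alg{A} \leq \alg{B} \in \class{K}$ and $\class{K}$ is a prevariety, both $\theta^{\alg{A}}_{\class{K}}$ and $\theta^{\alg{B}}_{\class{K}}$ are identity congruences, so the trace condition holds automatically; hence every instance of (ii) is an instance of (iii), and it is (iii) $\Rightarrow$ (ii) that is the trivial weakening --- exactly the opposite of what you claim. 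Your one-sentence justification of (ii) $\Rightarrow$ (iii) as a ``restriction'' therefore fails, and this is precisely the implication that requires an argument. Correspondingly, your ``hard direction'' collapses: since $\alg{B} \in \class{K}$ in that setting, $\theta^{\alg{B}}_{\class{K}} = \Delta_{\alg{B}}$, all the quotients you construct are by identity congruences, and the whole apparatus does nothing beyond restating the hypothesis.

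The irony is that the quotient argument you sketch is exactly the right tool, just for the other direction. To prove (ii) $\Rightarrow$ (iii), as the paper does, take arbitrary $\alg{A} \leq \alg{B}$ with $\theta^{\alg{A}}_{\class{K}} = \alg{A}^{2} \cap \theta^{\alg{B}}_{\class{K}}$; the trace condition is what makes the induced map $\alg{A}/\theta^{\alg{A}}_{\class{K}} \to \alg{B}/\theta^{\alg{B}}_{\class{K}}$ a well-defined embedding into a member of $\class{K}$, so (ii) applies to this inclusion, and one transfers the filter-trace equation back up using that $b \in \Fg^{\alg{A}}_{\logic{L}}(a_{1}, \dots, a_{n})$ iff $b/\theta^{\alg{A}}_{\class{K}} \in \Fg^{\alg{A}/\theta^{\alg{A}}_{\class{K}}}_{\logic{L}}(a_{1}/\theta^{\alg{A}}_{\class{K}}, \dots, a_{n}/\theta^{\alg{A}}_{\class{K}})$ --- which holds because $\class{K} \supseteq \Alg \logic{L}$ forces $\theta^{\alg{A}}_{\class{K}}$ to be compatible with every $\logic{L}$-filter, as you yourself note. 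So the ingredients are all present in your proposal, but as written the proof does not establish the equivalence: the substantive implication is dismissed with an incorrect justification, and the argument you do supply proves only the trivial one.
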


\begin{proof}
  We only prove the equivalence of (ii) and (iii). Clearly (ii) is a special case of (iii). Conversely, suppose that (ii) holds and consider $\alg{A} \leq \alg{B}$ with $\theta^{\alg{A}}_{\class{K}} = \alg{A}^{2} \cap \theta^{\alg{B}}_{\class{K}}$. Then $\alg{A} / \theta^{\alg{A}}_{\class{K}} \leq \alg{B} / \theta^{\alg{B}}_{\class{K}} \in \class{K}$, so for each~$b \in \alg{A}$
\begin{align*}
  b \in \Fg^{\alg{A}}_{\logic{L}} (a_{1}, \dots, a_{n}) & \iff b / \theta^{\alg{A}}_{\class{K}} \in \Fg^{\alg{A} / \theta^{\alg{A}}_{\class{K}}} (a_{1} / \theta^{\alg{A}}_{\class{K}}, \dots, a_{n} / \theta^{\alg{A}}_{\class{K}}) \\
  & \iff b / \theta^{\alg{B}}_{\class{K}} \in \Fg^{\alg{B} / \theta^{\alg{B}}_{\class{K}}}(a_{1} / \theta^{\alg{B}}_{\class{K}}, \dots, a_{n} / \theta^{\alg{B}}_{\class{K}}) \\
  & \iff b \in \Fg^{\alg{B}} (a_{1}, \dots, a_{n}). \qedhere
\end{align*}
\end{proof}

  We now show that the local EDCF is, under the modest assumption that $\Alg \logic{L}$ is closed under subalgebras, equivalent to the absolute FEP. The implication from the local EDCF to the absolute FEP will be straight\-forward to prove. The opposite implication requires us to produce a family $\boldPsi_{n}$ of sets of equations in $n+1$ variables for each $n \in \omega$. To accomplish this, it will be convenient to introduce the notion of filter generation relative to a congruence.

  Recall that a congruence $\theta$ on $\alg{A}$ is compatible with a set $F \subseteq \alg{A}$ in case $a \in F$ and $\pair{a}{b} \in \theta$ imply $b \in F$. We now view at this relation from the opposite perspective: we say that a filter $F$ is compatible with $\theta$ if this implication holds. The $\logic{L}$-filters $F$ on $\alg{A}$ compatible with a given congruence $\theta$ form a complete lattice $\Fi^{\theta}_{\logic{L}} \alg{A}$ ordered by inclusion where meets are intersections. Then
\begin{align*}
  \Fi^{\theta}_{\logic{L}} \alg{A} \iso \Fi_{\logic{L}} \alg{A} / \theta \text{ via the map } \pi^{-1}\colon \Fi_{\logic{L}} \alg{A} / \theta \to \Fi^{\theta}_{\logic{L}} \alg{A},
\end{align*}
  where $\pi\colon \alg{A} \to \alg{A} / \theta$ is the projection map. However, we shall think about $\Fi^{\theta}_{\logic{L}} \alg{A}$ as an object in its own right sitting inside $\Fi_{\logic{L}} \alg{A}$. In particular, the smallest $\logic{L}$-filter compatible with $\theta$ which extends a set $X \subseteq \alg{A}$ is
\begin{align*}
  \Fg^{\alg{A}, \theta}_{\logic{L}} X \assign \pi^{-1}[\Fg^{\alg{A} / \theta}_{\logic{L}} \pi[X]].
\end{align*}
  In particular,
\begin{align*}
  b \in \Fg^{\alg{A}, \theta}_{\logic{L}} (a_{1}, \dots, a_{n}) \iff b / \theta \in \Fg^{\alg{A} / \theta} (a_{1} / \theta, \dots, a_{n} / \theta).
\end{align*}
  Observe that the map
\begin{align*}
  \Fg^{\alg{A}, \theta}_{\logic{L}}\colon \Fi_{\logic{L}} \alg{A} \to \Fi^{\theta}_{\logic{L}} \alg{A}
\end{align*}
  preserves arbitrary joins and maps compact elements of $\Fi_{\logic{L}} \alg{A}$ to compact elements of $\Fi^{\theta}_{\logic{L}} \alg{A}$.

  Given any $a_{1}, \dots, a_{n}, b \in \alg{A}$, we can now consider the family of congruences
\begin{align*}
  \set{\theta \in \Con_{\class{K}} \alg{A}}{b \in \Fg^{\alg{A}, \theta}_{\logic{L}}(a_{1}, \dots, a_{n})}.
\end{align*}
  This family of congruences is an upset of $\Con_{\class{K}} \alg{A}$. It is non-empty for each $n \geq 1$, and if $\logic{L}$ has a theorem, then it is non-empty also for $n = 0$. Such families of congruences on the formula algebra $\Fm_{n+1}$ over $n+1$ variables will provide us with the required sequence~$\boldPsi_{n}$.

\begin{theorem} \label{thm: ledcf}
  Let $\class{K} \supseteq \Alg \logic{L}$ be a prevariety. Then $\logic{L}$ has a local EDCF on $\class{K}$ if and only if it has the absolute FEP on $\class{K}$.
\end{theorem}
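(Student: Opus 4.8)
The plan is to prove the two implications separately, working throughout with the reformulation of the absolute FEP supplied by the preceding Fact (its clause (ii)), namely
\[
  \Fg^{\alg{A}}_{\logic{L}}(a_{1}, \dots, a_{n}) = \alg{A} \cap \Fg^{\alg{B}}_{\logic{L}}(a_{1}, \dots, a_{n}) \quad \text{whenever } \alg{A} \leq \alg{B} \in \class{K}.
\]

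For the forward implication, suppose $\logic{L}$ has the local EDCF($\boldPsi$) on $\class{K}$ and fix $\alg{A} \leq \alg{B} \in \class{K}$; note $\alg{A} \in \class{K}$ since $\class{K}$ is closed under subalgebras. The inclusion $\subseteq$ holds for any logic, because $\alg{A} \cap \Fg^{\alg{B}}_{\logic{L}}(a_{1}, \dots, a_{n})$ is the preimage of an $\logic{L}$-filter under the inclusion $\alg{A} \hookrightarrow \alg{B}$, hence an $\logic{L}$-filter on $\alg{A}$ containing $a_{1}, \dots, a_{n}$. For $\supseteq$, take $b \in \alg{A} \cap \Fg^{\alg{B}}_{\logic{L}}(a_{1}, \dots, a_{n})$. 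The local EDCF on $\alg{B}$ gives some $\boldTheta_{n} \in \boldPsi_{n}$ with $\alg{B} \vDash \boldTheta_{n}(a_{1}, \dots, a_{n}, b)$; since $\boldTheta_{n}$ is a \emph{parameter-free} set of equations and the arguments $a_{1}, \dots, a_{n}, b$ already lie in the subalgebra $\alg{A}$, satisfaction of these equations is absolute between $\alg{A}$ and $\alg{B}$, so $\alg{A} \vDash \boldTheta_{n}(a_{1}, \dots, a_{n}, b)$, and the local EDCF on $\alg{A}$ yields $b \in \Fg^{\alg{A}}_{\logic{L}}(a_{1}, \dots, a_{n})$. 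This is exactly the step where the absence of parameters is essential.

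For the converse I would construct $\boldPsi$ following the recipe sketched before the statement. For each $n$ let $\Fm_{n+1}$ be the formula algebra on the distinct variables $x_{1}, \dots, x_{n}, y$; for each $\class{K}$-congruence $\theta$ on $\Fm_{n+1}$ with $y \in \Fg^{\Fm_{n+1}, \theta}_{\logic{L}}(x_{1}, \dots, x_{n})$ set $\boldTheta^{\theta}_{n} \assign \set{\alpha \equals \beta}{\pair{\alpha}{\beta} \in \theta}$, and let $\boldPsi_{n}$ collect all such $\boldTheta^{\theta}_{n}$. The easy half of the defining equivalence: if $\alg{A} \vDash \boldTheta^{\theta}_{n}(a_{1}, \dots, a_{n}, b)$, let $v\colon \Fm_{n+1} \to \alg{A}$ send $x_{i} \mapsto a_{i}$, $y \mapsto b$; the hypothesis says $\theta \subseteq \Ker v$, so $v$ factors through a homomorphism $\bar{v}\colon \Fm_{n+1}/\theta \to \alg{A}$, and applying $\bar{v}$ to $y/\theta \in \Fg^{\Fm_{n+1}/\theta}_{\logic{L}}(x_{1}/\theta, \dots, x_{n}/\theta)$ (filter generation being preserved by homomorphisms) gives $b \in \Fg^{\alg{A}}_{\logic{L}}(a_{1}, \dots, a_{n})$. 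For the hard half, suppose $b \in \Fg^{\alg{A}}_{\logic{L}}(a_{1}, \dots, a_{n})$ with $\alg{A} \in \class{K}$; let $\alg{C} \leq \alg{A}$ be the subalgebra generated by $\{ a_{1}, \dots, a_{n}, b \}$, so $\alg{C} \in \class{K}$, take $v\colon \Fm_{n+1} \twoheadrightarrow \alg{C}$ as above, and put $\theta \assign \Ker v$, so that $\Fm_{n+1}/\theta \iso \alg{C} \in \class{K}$ makes $\theta$ a $\class{K}$-congruence and $\alg{A} \vDash \boldTheta^{\theta}_{n}(a_{1}, \dots, a_{n}, b)$ holds trivially. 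It remains to check the membership condition on $\theta$, i.e.\ $b \in \Fg^{\alg{C}}_{\logic{L}}(a_{1}, \dots, a_{n})$, and this is precisely where the absolute FEP enters: by its clause (ii), $\Fg^{\alg{C}}_{\logic{L}}(a_{1}, \dots, a_{n}) = \alg{C} \cap \Fg^{\alg{A}}_{\logic{L}}(a_{1}, \dots, a_{n})$, and $b$ belongs to both sides on the right. (The degenerate case $n = 0$ with $\logic{L}$ theoremless is consistent, as then no admissible $\theta$ exists and $\boldPsi_{0} = \emptyset$ correctly defines the empty filter.)

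The main obstacle is the backward direction, and within it two recognitions: that the right templates for the defining equations are the $\class{K}$-congruences on the free algebra $\Fm_{n+1}$, and that cutting down to the finitely generated subalgebra $\alg{C}$ — whose quotient presentation is what turns $\Ker v$ into a bona fide $\class{K}$-congruence — is exactly the move that the absolute FEP licenses. The parameter-free, purely equational character of $\boldPsi$ in both directions rests on the absoluteness of quantifier-free satisfaction under subalgebras; allowing witnessing elements not already present in $\alg{A}$ would break the forward implication and force parameters into the definition.
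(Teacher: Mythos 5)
Your proof is correct and follows essentially the same route as the paper's: the forward direction rests on the absoluteness of parameter-free equations between a subalgebra and its extension, and the converse builds $\boldPsi_{n}$ out of exactly the $\class{K}$-congruences $\theta$ on $\Fm_{n+1}$ with $y \in \Fg^{\Fm_{n+1}, \theta}_{\logic{L}}(x_{1}, \dots, x_{n})$, using that $\Ker v$ is a $\class{K}$-congruence because the generated subalgebra lies in $\class{K}$ and invoking the absolute FEP to pass from the ambient algebra down to that subalgebra. The only cosmetic difference is that you verify the two halves of the defining equivalence separately (handling the easy half via preservation of filter generation under homomorphisms), whereas the paper runs a single chain of equivalences through $\Ker h$ and the upset property of $\boldPsi_{n}$.
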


\begin{proof}
  Suppose that $\logic{L}$ has the local EDCF($\boldPsi$) on $\class{K}$. Consider $\alg{A} \leq \alg{B}$ with $\alg{B} \in \class{K}$ and $a_{1}, \dots, a_{n}, b \in \alg{A}$. Since $\class{K}$ is a prevariety, $\alg{A} \in \class{K}$. Then
\begin{align*}
  b \in \Fg^{\alg{B}}_{\logic{L}}(a_{1}, \dots, a_{n}) & \iff \alg{B} \vDash \boldTheta_{n}(a_{1}, \dots, a_{n}, b) \text{ for some } \boldTheta_{n} \in \boldPsi_{n} \\
  & \iff \alg{A} \vDash \boldTheta_{n}(a_{1}, \dots, a_{n}, b) \text{ for some } \boldTheta_{n} \in \boldPsi_{n} \\
  & \iff b \in \Fg^{\alg{A}}_{\logic{L}}(a_{1}, \dots, a_{n}).  
\end{align*}

  Conversely, let $\Fm_{n+1}$ be the algebra of formulas of $\logic{L}$ in the ${n+1}$ variables $\{ x_{1}, \dots, x_{n}, y \}$, i.e.\ the absolutely free algebra generated by this set in the signature of $\logic{L}$, and let $\boldPsi_{n}(x_{1}, \dots, x_{n}, y)$ be the upset of all $\class{K}$-congruences $\theta$ on $\Fm_{n+1}$ such that $y \in \Fg^{\Fm_{n+1}, \theta}_{\logic{L}}(x_{1}, \dots, x_{n})$. Observe that $\boldPsi_{n}$ is non-empty unless $n = 0$ and $\logic{L}$ has no theorem. We claim that $\boldPsi_{n}$ is a local equational definition of $n$-generated $\logic{L}$-filters.

  Consider an algebra $\alg{B} \in \class{K}$ and $a_{1}, \dots, a_{n}, b \in \alg{B}$. Let $\alg{A}$ be the subalgebra of $\alg{B}$ generated by these elements. $\class{K}$ is closed under subalgebras, so $\alg{A} \in \class{K}$. Let $h\colon \Fm_{n+1} \to \alg{A}$ be the unique homomorphism with $h(x_{1}) = a_{1}$, \dots, $h(x_{n}) = a_{n}$, and $h(y) = b$. This homomorphism is surjective, so by the First Isomorphism Theorem~\cite[Theorem~6.12]{burris+sankappanavar81} the algebras $\alg{A}$ and $\Fm_{n+1} / \Ker h$ are isomorphic via $h(x) \mapsto \pi(x)$, where $\pi\colon \Fm_{n+1} \to \Fm_{n+1} / \Ker h$ is the quotient map. We have the following chain of equivalences for each $\boldTheta_{n} \in \boldPsi_{n}$:
\begin{align*}
  \alg{B} \vDash \boldTheta_{n}(a_{1}, \dots, a_{n}, b) & \iff \alg{A} \vDash \boldTheta_{n}(a_{1}, \dots, a_{n}, b) \\
  & \iff \alg{A} \vDash \boldTheta_{n}(h(x_{1}), \dots, h(x_{n}), h(y)) \\
  & \iff \Fm_{n+1} / \Ker h \vDash \boldTheta_{n}(\pi(x_{1}), \dots, \pi(x_{n}), \pi(y)) \\
  & \iff \boldTheta_{n}(x_{1}, \dots, x_{n}, y) \leq \Ker h.
\end{align*}
  Since $\boldPsi_{n}$ is an upset, it follows that $\alg{B} \vDash \boldTheta_{n}(a_{1}, \dots, a_{n}, b)$ for some $\boldTheta_{n} \in \boldPsi_{n}$ if and only if $\Ker h \in \boldPsi_{n}$. But
\begin{align*}
  \Ker h \in \boldPsi_{n} & \iff y \in \Fg^{\Fm_{n}, \Ker h}_{\logic{L}}(x_{1}, \dots, x_{n}) \\
  & \iff \pi(y) \in \Fg^{\Fm_{n} / \Ker h}_{\logic{L}}(\pi(x_{1}), \dots, \pi(x_{n})) \\
  & \iff h(y) \in \Fg^{\alg{A}}_{\logic{L}}(h(x_{1}), \dots, h(x_{n})) \\
  & \iff b \in \Fg^{\alg{A}}_{\logic{L}}(a_{1}, \dots, a_{n}) \\
  & \iff b \in \Fg^{\alg{B}}_{\logic{L}}(a_{1}, \dots, a_{n}),
\end{align*}
  where the last equivalence uses the absolute FEP on $\class{K}$.
\end{proof}

  The following is a less refined but more concise form of the above theorem.

\begin{theorem} \label{thm: baby ledcf}
  Suppose that $\Alg \logic{L}$ is closed under subalgebras. Then $\logic{L}$ has a local EDCF on $\Alg \logic{L}$ if and only if it has the absolute FEP on $\Alg \logic{L}$.
\end{theorem}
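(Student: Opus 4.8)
The plan is to derive this statement directly from the refined Theorem~\ref{thm: ledcf}, applied with the ambient prevariety taken to be $\class{K} \assign \Alg \logic{L}$ itself. With this choice the inclusion $\class{K} \supseteq \Alg \logic{L}$ holds trivially, and the local EDCF and absolute FEP ``on $\class{K}$'' become exactly the local EDCF and absolute FEP ``on $\Alg \logic{L}$''. Consequently the only thing that needs checking before invoking Theorem~\ref{thm: ledcf} is that, under the stated hypothesis, the class $\Alg \logic{L}$ genuinely qualifies as a prevariety, since that is the standing assumption on $\class{K}$ in the refined theorem.

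To verify this, I would recall from the preliminaries that $\Alg \logic{L}$ is always a subdirect class, that is, closed under isomorphic images and subdirect products. The key observation is that an arbitrary direct product $\prod_{i \in I} \alg{A}_{i}$ is itself a subdirect product of its factors, because each projection map $\pi_{i}$ is surjective; hence closure under subdirect products automatically upgrades to closure under arbitrary products. Combining this with the hypothesis that $\Alg \logic{L}$ is closed under subalgebras, one obtains that $\Alg \logic{L}$ is closed under isomorphic images, subalgebras, and products, which is precisely the definition of a prevariety given in Section~\ref{sec: preliminaries}.

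Once $\Alg \logic{L}$ is known to be a prevariety containing $\Alg \logic{L}$, Theorem~\ref{thm: ledcf} applies verbatim and delivers the asserted equivalence. There is no genuine obstacle here: all of the mathematical content resides in Theorem~\ref{thm: ledcf}, and the present statement merely packages the convenient special case in which the ambient prevariety coincides with $\Alg \logic{L}$. The only point meriting a sentence of care is the passage from subdirect-product closure to product closure, which is immediate from surjectivity of the projections; this parallels how the concise Theorem~\ref{thm: baby pedcf} was deduced from Theorem~\ref{thm: pledcf}, and I would follow the same brief style in writing up the argument.
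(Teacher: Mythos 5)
Your proposal is correct and is essentially the paper's own argument: the paper presents Theorem~\ref{thm: baby ledcf} as an immediate specialization of Theorem~\ref{thm: ledcf} to $\class{K} \assign \Alg \logic{L}$, with the hypothesis of closure under subalgebras serving exactly to make $\Alg \logic{L}$ a prevariety. Your one substantive check --- that a subdirect class is automatically closed under arbitrary (including empty) products because a full direct product is a subdirect product of its factors --- is the right observation and is consistent with the paper's remarks in the preliminaries.
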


  As examples of logics with a local EDCF we have already seen the global modal logic $\GlobK$ and {\L}ukasiewicz logic $\Luk$, as well as their fragments in any signature containing $\Box, \wedge, 1$ in the case of $\GlobK$ and in any signature containing $\odot, 1$ in the case of $\Luk$. Many other global modal logics or substructural logics have a local EDCF of the same form. A trivial example is the identity logic $\IdL$ where $\Gamma \vdash_{\IdL} \varphi$ if and only if $\varphi \in \Gamma$, and accordingly $b \in \Fg^{\alg{A}}_{\IdL}(a_{1}, \dots, a_{n})$ if and only if $b = a_{1}$ or \dots\ or $b = a_{n}$.

  A more exotic example is provided by the paraconsistent weak Kleene logic $\PWK$ introduced already in the previous section (recall Example~\ref{example: pwk does not have pedcf}). To establish the local EDCF for $\PWK$, we shall first need the following description of $\PWK$-filters on generalized involutive bisemilattices, i.e.\ algebras in the variety $\GIB$ generated by $\WKthree$.

\begin{lemma}[{\cite[Proposition~7.1.8 and Remark 7.1.9]{bonzio+paoli+prabaldi22}}]\label{lemma: pwk filters}
  Let $\alg{A}\in \GIB$. A set $F\subseteq\alg{A}$ is a $\PWK$-filter on $\alg{A}$ if and only if the following hold for every $a,b\in \alg{A}$:
\begin{enumerate}[(i)]
 \item $a\vee\neg a\in F$,
 \item if $a\in F$ and $a\leq_{\vee} b$, then $b\in F$,
 \item if $a,b\in F$, then $a\wedge b\in F$,
 \end{enumerate}
  where $\leq_{\vee}$ is the partial order determined by the join operation $\vee$:
\begin{align*}
  a \leq_{\vee} b \iff a \vee b = b.
\end{align*}
  The smallest $\PWK$-filter on $\alg{A}$ is the set $P(\alg{A}) \assign \set{a \in \alg{A}}{a = a \vee \neg a}$.
 \end{lemma}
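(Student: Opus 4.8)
The plan is to characterize $\PWK$-filters directly from the left-variable-inclusion description of $\PWK$ recalled above, exploiting that $\GIB = \Var(\WKthree)$, so that any equation valid in $\WKthree$ holds throughout $\GIB$. For necessity I would match each of (i)--(iii) with a concrete valid rule: $x \vee \neg x$ is a $\PWK$-theorem (take $\Delta = \emptyset$ in the inclusion criterion, since $\vdash_{\CL} x \vee \neg x$), giving (i); the rule $x \vdash_{\PWK} x \vee y$ is valid (witnessed by $\Delta = \{x\}$, as $x \vdash_{\CL} x \vee y$ and $\Var(\{x\}) \subseteq \Var(x \vee y)$), and because $a \leq_{\vee} b$ means $b = a \vee b$, closure under this rule is exactly (ii); finally $x, y \vdash_{\PWK} x \wedge y$ is valid (witnessed by $\Delta = \{x, y\}$), giving (iii). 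Hence every $\PWK$-filter satisfies (i)--(iii).

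For the converse I would assume $F$ satisfies (i)--(iii) and show it is closed under every valid $\PWK$-rule. Fix such a rule $\gamma_1, \dots, \gamma_n \vdash \varphi$ and a homomorphism $h \colon \Fm \to \alg{A}$ with $h(\gamma_i) \in F$; by the inclusion criterion pick $\Delta \subseteq \{\gamma_1, \dots, \gamma_n\}$ with $\Delta \vdash_{\CL} \varphi$ and $\Var(\Delta) \subseteq \Var(\varphi)$. If $\Delta \neq \emptyset$, set $\delta \assign \bigwedge \Delta$, so that $h(\delta) \in F$ by (iii). The crucial point is the identity $\WKthree \vDash \delta \vee \varphi \equals \varphi$: for any $v \colon \Fm \to \WKthree$, if some variable of $\varphi$ is sent to $\onehalf$ then $v(\varphi) = \onehalf$ by infectiousness and $v(\delta) \vee \onehalf = \onehalf$, while otherwise all variables occurring in $\delta$ or $\varphi$ are classical and $\delta \vdash_{\CL} \varphi$ forces $v(\delta) \leq_{\vee} v(\varphi)$ within $\{0, 1\}$. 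Transferring this identity to $\GIB$ gives $h(\delta) \leq_{\vee} h(\varphi)$, whence $h(\varphi) \in F$ by (ii). The degenerate case $\Delta = \emptyset$, i.e.\ $\varphi$ a classical tautology, is identical with $\delta$ replaced by $t \assign \bigwedge_{x \in \Var(\varphi)} (x \vee \neg x)$: then $t \in F$ by (i) and (iii), and $\WKthree \vDash t \vee \varphi \equals \varphi$ by the same case analysis.

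For the final assertion, transferring the $\WKthree$-identity $(a \vee \neg a) \vee \neg(a \vee \neg a) \equals a \vee \neg a$ shows $\{a \vee \neg a : a \in \alg{A}\} \subseteq P(\alg{A})$ (the reverse inclusion being trivial, as $c \in P(\alg{A})$ means $c = c \vee \neg c$), so $\{a \vee \neg a : a \in \alg{A}\} = P(\alg{A})$. Hence by (i) every $\PWK$-filter contains $P(\alg{A})$, and it remains only to verify that $P(\alg{A})$ is itself a $\PWK$-filter via (i)--(iii). I would do this again by identity transfer, always rewriting the element in question as a term that provably lands in $P(\alg{A})$: (i) and closure under $\wedge$ hold because $a \vee \neg a$ and $(a \vee \neg a) \wedge (b \vee \neg b)$ satisfy $z \vee \neg z \equals z$ already in $\WKthree$, while upward $\leq_{\vee}$-closure follows from $\WKthree \vDash (a \vee \neg a \vee e) \vee \neg(a \vee \neg a \vee e) \equals a \vee \neg a \vee e$, since for $c = a \vee \neg a \in P(\alg{A})$ the hypothesis $c \leq_{\vee} e$ turns the left-hand side into $e \vee \neg e$ and the right-hand side into $e$.

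I expect the only genuinely substantive step to be the production of these $\vee$-order identities, and in particular recognizing that the variable-inclusion hypothesis $\Var(\Delta) \subseteq \Var(\varphi)$ is exactly what makes $\delta \vee \varphi \equals \varphi$ hold in $\WKthree$: were some variable to occur in $\delta$ but not in $\varphi$, evaluating it to $\onehalf$ could give $v(\delta) = \onehalf \not\leq_{\vee} v(\varphi)$ and break the identity. Once these identities are available, the argument reduces entirely to the variety-level transfer from $\WKthree$ to $\GIB$ together with the elementary conditions (i)--(iii), so no appeal to the Płonka-sum representation of $\GIB$ is needed.
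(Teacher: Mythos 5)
Your proof is correct, but note that the paper does not actually prove this lemma: it imports it wholesale from Proposition~7.1.8 and Remark~7.1.9 of the Bonzio--Paoli--Pra~Baldi monograph, so there is no in-paper argument to compare against. What you have produced is a self-contained derivation from two facts the paper does state, namely the left-variable-inclusion description of $\vdash_{\PWK}$ and $\GIB = \AlgH\AlgS\AlgP(\WKthree)$, and the structure is sound: the necessity direction correctly matches (i)--(iii) with the valid rules $\emptyset \vdash x \vee \neg x$, $x \vdash x \vee y$, and $x, y \vdash x \wedge y$; the sufficiency direction correctly isolates the key identity $\WKthree \vDash \delta \vee \varphi \equals \varphi$ for $\delta = \bigwedge\Delta$ with $\Var(\Delta) \subseteq \Var(\varphi)$ and $\Delta \vdash_{\CL} \varphi$, whose two-case verification (some variable of $\varphi$ goes to $\onehalf$ versus all variables classical, using that $\{0,1\}$ is a Boolean subalgebra of $\WKthree$) is exactly right, as is your observation that the variable-inclusion hypothesis is what rules out a premise variable being independently sent to $\onehalf$. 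The treatment of $P(\alg{A})$ via the identities $(x \vee \neg x) \vee \neg(x \vee \neg x) \equals x \vee \neg x$ and $(x \vee \neg x \vee y) \vee \neg(x \vee \neg x \vee y) \equals x \vee \neg x \vee y$ also works. Two points you use silently but legitimately: associativity and commutativity of $\vee$ and $\wedge$ transfer from $\WKthree$ to $\GIB$ (needed to identify $a \vee \neg a \vee e$ with $c \vee e$ and to make $h(\bigwedge\Delta)$ well defined up to (iii)), and the signature of $\PWK$ here has no constants, so $\Var(\varphi) \neq \emptyset$ and your term $t$ in the $\Delta = \emptyset$ case is always available. Your approach has the merit of avoiding any appeal to the P{\l}onka-sum structure theory that the cited source relies on.
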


  Recall that $\WKthree$ ordered by $\leq_{\vee}$ is the three-element chain $0 < 1 < \onehalf$.

\begin{example}\label{example: pwk local edcf}
   The logic $\PWK$ has a local EDCF on $\GIB \supseteq \Alg \PWK$, namely
\begin{align*}
  \boldPsi_{n} \assign \{ \{ y \equals y \vee \neg y \} \} \cup \bigcup \set{\{ \{ \bigwedge X \leq_{\vee} y \} \}}{X \subseteq \{ x_{1}, \dots, x_{n} \}}.
\end{align*}
  More explicitly, $b \in \Fg^{\alg{A}}_{\PWK}(a_{1}, \dots, a_{n})$ for $a_{1}, \dots, a_{n}, b \in \alg{A} \in \GIB$ if and only if
\begin{align*}
  \alg{A} \vDash b \equals b \vee \neg b \text{ or } c_{1} \wedge \dots \wedge c_{k} \leq_{\vee} b \text{ for some } \{ c_{1}, \dots, c_{k} \} \subseteq \{ a_{1}, \dots, a_{n} \}.
\end{align*}
\end{example}

\begin{proof} 
  Consider $\alg{A} \in \GIB$ and $a_{1},\dots,a_{n}\in\alg{A}$. Take
\begin{align*}
  F \assign \set{b \in \alg{A}}{\alg{A} \vDash \boldTheta_{n}(a_{1}, \dots, a_{n}, b) \text{ for some } \boldTheta_{n}\in\boldPsi_{n}}.
\end{align*}
  We show that $F=\Fg^{\alg{A}}_{\PWK}(a_{1},\dots,a_{n})$. Clearly $a_{i}\in F$ for each $i \in \{ 1, \dots, n \}$, since $\leq_{\vee}$ is reflexive. Next, we prove that $F$ is a $\PWK$-filter. To this end it suffices to verify that $F$ satisfies conditions (i)--(iii) of Lemma~\ref{lemma: pwk filters}.

  Condition (i) holds, i.e.\ $P(\alg{A}) \subseteq F$, because $\{ x \equals x \vee \neg x \} \in \boldPsi_{n}$ and $\GIB \vDash x \vee \neg x \equals (x \vee \neg x) \vee \neg (x \vee \neg x)$.

  In order to show that condition (ii) holds, i.e.\ that $F$ is an upset with respect to $\leq_{\vee}$, suppose $a\in F$ and $a \leq_{\vee} b \in \alg{A}$. Then either $a \in P(\alg{A})$ (i.e.\ $a = a \vee \neg a$) or $c_{i} \wedge \dots \wedge c_{k} \leq_{\vee} a$ for some $\{ c_{1}, \dots, c_{k} \} \subseteq \{a_{1},\dots,a_{n} \}$.
   In the former case, if $a \in P(\alg{A})$ and $a \leq_{\vee} b$, then $b = b \vee \neg b \in P(\alg{A})$ by the last claim in Lemma~\ref{lemma: pwk filters}, therefore $b \in F$, since $\{ y \equals y \vee \neg y \} \in \boldPsi_{n}$.
   In the latter case, $a_{i}\wedge\dots\wedge a_{k}\leq_{\vee} a \leq_{\vee} b$, so $b\in F$, since $\{x_{i}\wedge\dots\wedge x_{k}\leq_{\vee} y\}\in\boldPsi_{n}$.

  Finally, to prove condition that condition (iii) holds, i.e.\ that $F$ is closed under $\wedge$, consider $a,b\in F$. 
   If $a, b \in P(\alg{A})$, then $a\wedge b\in P(\alg{A})$ by the last claim in Lemma~\ref{lemma: pwk filters}, therefore $a \wedge b \in F$. 
   If only one among the elements $a$ and $b$, say $a$, lies in $P(\alg{A})$, then $c_{1} \wedge \dots \wedge c_{k} \leq_{\vee} b$ for some non-empty set $\{ c_{1},\dots, c_{k}\} \subseteq \{a_{1},\dots,a_{n}\}$.
  Since $\GIB \vDash x \leq_{\vee} x \wedge (x \vee y)$ and $\GIB \vDash (x \vee \neg x) \wedge y \equals y \wedge (y\vee (x \vee \neg x))$, we obtain that $c_{1} \wedge \dots \wedge c_{k} \leq_{\vee} b \leq_{\vee} b\wedge (b \vee a) = a\wedge b \in F$.
  If neither $a$ nor $b$ lie in $P(\alg{A})$, then $c \assign c_{1}\wedge\dots\wedge c_{k} \leq_{\vee} a$ and $d \assign d_{1}\wedge\dots\wedge d_{l}\leq_{\vee} b$ for some non-empty sets $\{c_{1},\dots,c_{k}\},\{d_{1},\dots,d_{l}\}\subseteq\{a_{1},\dots,a_{n}\}$. Observe that $\GIB \vDash x \wedge y \leq_{\vee} (x \vee u) \wedge (y \vee v)$, so the inequalities $c \leq_{\vee} a$ and $d \leq_{\vee} b$ imply that $c \wedge d \leq_{\vee} a \wedge b$. But $\{ c_{1}, \dots, c_{k}, d_{1}, \dots, d_{l} \}$ is a non-empty subset of $\{ a_{1}, \dots, a_{n} \}$, so $c \wedge d \in F$. Because we have already proved that $F$ is an upset with respect to $\leq_{\vee}$, it follows that $a \wedge b \in F$. This proves that $F$ is indeed a $\PWK$-filter.

  It remains to show that $F$ is the smallest $\PWK$-filter with $a_{1}, \dots, a_{n} \in F$. Let $G$ therefore be a $\PWK$-filter containing $a_{1},\dots,a_{n}$ and consider $a \in F$. If $a$ is a solution of $x \equals x \vee \neg x$ on $\alg{A}$, then $a\in G$ by item (i) of Lemma~\ref{lemma: pwk filters}. Otherwise, $c_{1} \wedge \dots \wedge c_{k}\leq a$ for some non-empty finite set $\{ c_{1}, \dots, c_{k} \} \subseteq \{ a_{1}, \dots, a_{n} \}$. Since $G$ contains $c_{1},\dots, c_{k}$, item (iii) of Lemma~\ref{lemma: pwk filters} entails that $c_{1} \wedge \dots \wedge c_{k}\in G$. Moreover, by condition (ii) the inequality $c_{1} \wedge \dots \wedge c_{k} \leq_{\vee} a$ entails that $a \in G$. Therefore $F\subseteq G$, which proves that $F$ is indeed the smallest $\PWK$-filter on $\alg{A}$ containing $a_{1}, \dots, a_{n}$.
\end{proof}

  We now show under what conditions a local EDCF can be pared down to a global EDCF.

\begin{lemma} \label{lemma: directed unions}
  Let $\theta \in \Con_{\class{K}} \alg{A}$ be the directed union of a family of $\class{K}$-congruences $\theta_{i} \in \Con_{\class{K}} \alg{A}$ for $i \in I$. Then the $\logic{L}$-filter $\Fg^{\alg{A}, \theta}_{\logic{L}}(a_{1}, \dots, a_{n})$ is the directed union of the family of $\logic{L}$-filters $\Fg^{\alg{A}, \theta_{i}}_{\logic{L}}(a_{1}, \dots, a_{n})$ for $i \in I$.
\end{lemma}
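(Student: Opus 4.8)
The plan is to use the fact that $\Fg^{\alg{A}, \theta}_{\logic{L}}(a_{1}, \dots, a_{n})$ is, by definition, the smallest $\logic{L}$-filter compatible with $\theta$ which contains $a_{1}, \dots, a_{n}$. The first observation I would record is that relative filter generation is monotone in the congruence: if $\theta_{i} \subseteq \theta_{j}$ are both $\class{K}$-congruences, then any $\logic{L}$-filter compatible with $\theta_{j}$ is a fortiori compatible with $\theta_{i}$, so $\Fg^{\alg{A}, \theta_{j}}_{\logic{L}}(a_{1}, \dots, a_{n})$ is an $\logic{L}$-filter compatible with $\theta_{i}$ containing the generators, whence $\Fg^{\alg{A}, \theta_{i}}_{\logic{L}}(a_{1}, \dots, a_{n}) \subseteq \Fg^{\alg{A}, \theta_{j}}_{\logic{L}}(a_{1}, \dots, a_{n})$. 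Since the family $(\theta_{i})_{i \in I}$ is directed, this monotonicity immediately shows that the family of $\logic{L}$-filters $\Fg^{\alg{A}, \theta_{i}}_{\logic{L}}(a_{1}, \dots, a_{n})$ is directed as well, so its union $G$ is again an $\logic{L}$-filter (directed joins in $\Fi_{\logic{L}} \alg{A}$ being directed unions). The same monotonicity applied to $\theta_{i} \subseteq \theta$ yields $\Fg^{\alg{A}, \theta_{i}}_{\logic{L}}(a_{1}, \dots, a_{n}) \subseteq \Fg^{\alg{A}, \theta}_{\logic{L}}(a_{1}, \dots, a_{n})$ for each $i$, and hence the inclusion $G \subseteq \Fg^{\alg{A}, \theta}_{\logic{L}}(a_{1}, \dots, a_{n})$.

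For the reverse inclusion I would show that $G$ is itself compatible with $\theta$; since $G$ clearly contains $a_{1}, \dots, a_{n}$, minimality of $\Fg^{\alg{A}, \theta}_{\logic{L}}(a_{1}, \dots, a_{n})$ among $\theta$-compatible $\logic{L}$-filters extending the generators then forces $\Fg^{\alg{A}, \theta}_{\logic{L}}(a_{1}, \dots, a_{n}) \subseteq G$. To verify compatibility, suppose $a \in G$ and $\pair{a}{b} \in \theta$. Then $a \in \Fg^{\alg{A}, \theta_{i}}_{\logic{L}}(a_{1}, \dots, a_{n})$ for some $i \in I$, and because $\theta = \bigcup_{j \in I} \theta_{j}$ is a directed union we also have $\pair{a}{b} \in \theta_{j}$ for some $j \in I$. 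Choosing $k \in I$ with $\theta_{i}, \theta_{j} \subseteq \theta_{k}$, monotonicity gives $a \in \Fg^{\alg{A}, \theta_{k}}_{\logic{L}}(a_{1}, \dots, a_{n})$, while $\pair{a}{b} \in \theta_{k}$; since this filter is compatible with $\theta_{k}$ we conclude $b \in \Fg^{\alg{A}, \theta_{k}}_{\logic{L}}(a_{1}, \dots, a_{n}) \subseteq G$, as required.

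The main obstacle is precisely this compatibility check: it is the only place where the directedness hypothesis is genuinely used, and the subtlety is that the index $i$ witnessing $a \in G$ and the index $j$ witnessing $\pair{a}{b} \in \theta$ need not coincide. Directedness lets me merge them into a single $\theta_{k}$ on which both $a$ lies in the relative filter and the pair $\pair{a}{b}$ sits, so that the compatibility of the single filter $\Fg^{\alg{A}, \theta_{k}}_{\logic{L}}(a_{1}, \dots, a_{n})$ with $\theta_{k}$ can be invoked. Once compatibility is in hand, the two inclusions combine to give the claimed equality, together with the directedness of the union established in the first step.
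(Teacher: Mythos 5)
Your proof is correct and follows essentially the same route as the paper: both arguments come down to showing that the directed union is an $\logic{L}$-filter compatible with $\theta$ containing the generators (giving one inclusion by minimality) and that monotonicity of $\Fg^{\alg{A},\theta_i}_{\logic{L}}$ in the congruence gives the other. Your explicit merging of the index witnessing $a \in G$ with the index witnessing $\pair{a}{b} \in \theta$ via directedness is precisely the detail the paper compresses into its one-line compatibility claim.
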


\begin{proof}
  Let $F$ be the directed union of this family of $\logic{L}$-filters. Then $F$ is compatible with $\theta_{i}$ for each $i \in I$ by virtue of being a directed union of $\logic{L}$-filters compatible with $\theta_{i}$. It follows that $F$ is compatible with $\theta$. Conversely, each $\logic{L}$-filter $G$ compatible with $\theta$ is compatible with each $\theta_{i}$ for $i \in I$, so $\Fg^{\alg{A}, \theta_{i}}_{\logic{L}}(a_{1}, \dots, a_{n}) \subseteq G$ for each $i \in I$, and thus $F \subseteq G$.
\end{proof}

\begin{theorem} \label{thm: edcf} \label{thm: EDCF=FEP+cong}
  Let $\logic{L}$ be a logic with the local EDCF($\boldPsi$) on a prevariety~$\class{K} \supseteq \Alg \logic{L}$. Then the following are equivalent:
\begin{enumerate}[(i)]
\item $\logic{L}$ has an EDCF on $\class{K}$.
\item $\logic{L}$ has the EDCF($\boldTheta$) on $\class{K}$ for some sequence $\boldTheta$ with $\boldTheta_{n} \in \boldPsi_{n}$ for all~$n$.
\item For each $\alg{A}$ (or equivalently, for each $\alg{A} \in \class{K}$) and compact $F, G \in \Fi_{\logic{L}} \alg{A}$ there is a smallest $\class{K}$-congruence $\theta$ on $\alg{A}$ such that $G \subseteq \Fg^{\alg{A}, \theta}_{\logic{L}} F$.
\item For each $\alg{A}$ (or equivalently, for each $\alg{A} \in \class{K}$) and $a_{1}, \dots, a_{n}, b \in \alg{A}$ there is a smallest $\class{K}$-congruence $\theta$ on $\alg{A}$ such that $b \in \Fg^{\alg{A}, \theta}_{\logic{L}}(a_{1}, \dots, a_{n})$.
\item $\logic{L}$ has factor determined compact filters on $\class{K}$.
\end{enumerate}
  If $\class{K}$ is a quasivariety, then this $\class{K}$-congruence $\theta$ is a compact $\class{K}$-congruence.
\end{theorem}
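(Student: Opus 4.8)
The plan is to close a cycle through (i)--(iv) and then attach (v) using the results already proved. Two ingredients are immediate: Theorem~\ref{thm: removing locality} gives (i)$\Rightarrow$(ii), while (ii)$\Rightarrow$(i) is trivial; and the hypothesis that $\logic{L}$ has the local EDCF($\boldPsi$) on $\class{K}$ forces the absolute FEP on $\class{K}$ by Theorem~\ref{thm: ledcf}, a fact I will need in the hardest step. As for the parenthetical ``for each $\alg{A}$ (or equivalently, for each $\alg{A}\in\class{K}$)'' in (iii) and (iv), I will establish the stronger ``for each $\alg{A}$'' form directly, and its equivalence with the restriction to $\alg{A}\in\class{K}$ follows from the reduction to $\alg{A}/\theta^{\alg{A}}_{\class{K}}$, using the isomorphism $\Con_{\class{K}}\alg{A}\iso\Con_{\class{K}}(\alg{A}/\theta^{\alg{A}}_{\class{K}})$ and the compatibility of $\theta^{\alg{A}}_{\class{K}}$ with all $\logic{L}$-filters, exactly as in the proof of Fact~\ref{fact: alternative definition of edcf}.

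For (ii)$\Rightarrow$(iv) I would exhibit the smallest congruence explicitly. Fix $\alg{A}$ and $a_1,\dots,a_n,b$. For every $\class{K}$-congruence $\theta$ we have $\alg{A}/\theta\in\class{K}$, so unfolding $b\in\Fg^{\alg{A},\theta}_{\logic{L}}(a_1,\dots,a_n)$ as $b/\theta\in\Fg^{\alg{A}/\theta}_{\logic{L}}(a_1/\theta,\dots,a_n/\theta)$ and applying the global EDCF($\boldTheta$) turns it into $\alg{A}/\theta\vDash\boldTheta_n(a_1/\theta,\dots,a_n/\theta,b/\theta)$; but this says exactly that every pair obtained from $\boldTheta_n(a_1,\dots,a_n,b)$ lies in $\theta$. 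Thus the $\class{K}$-congruences witnessing $b\in\Fg^{\alg{A},\theta}_{\logic{L}}(a_1,\dots,a_n)$ form the principal upset of $\Cg^{\alg{A}}_{\class{K}}(\boldTheta_n(a_1,\dots,a_n,b))$, so this relative congruence is the required least one. When $\class{K}$ is a quasivariety I would then write it as the directed union of the compact $\class{K}$-congruences generated by the finite subsets of $\boldTheta_n(a_1,\dots,a_n,b)$ (valid because relative congruence generation is finitary in a quasivariety) and invoke Lemma~\ref{lemma: directed unions}: some finite subset already witnesses membership of $b$, and by minimality that compact congruence equals the whole one, proving the final clause.

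The equivalence (iii)$\Leftrightarrow$(iv) is routine and I would treat it briefly. Since $\Fg^{\alg{A},\theta}_{\logic{L}}F=\Fg^{\alg{A},\theta}_{\logic{L}}(a_1,\dots,a_n)$ whenever $F=\Fg^{\alg{A}}_{\logic{L}}(a_1,\dots,a_n)$, condition (iv) is the special case of (iii) with $G$ principal; conversely, writing $G=\Fg^{\alg{A}}_{\logic{L}}(b_1,\dots,b_m)$ and taking the least congruence $\theta_j$ furnished by (iv) for each $b_j$, the join $\bigvee_j\theta_j$ in $\Con_{\class{K}}\alg{A}$ is least for $G\subseteq\Fg^{\alg{A},\theta}_{\logic{L}}F$ because the witnessing congruences form an upset, and a finite join of compact congruences is compact. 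The crux is (iv)$\Rightarrow$(i). I apply (iv) to the free algebra $\Fm_{n+1}$ over $x_1,\dots,x_n,y$ to obtain the least $\class{K}$-congruence $\theta_n$ with $y\in\Fg^{\Fm_{n+1},\theta_n}_{\logic{L}}(x_1,\dots,x_n)$, and set $\boldTheta_n\assign\set{s\equals t}{\pair{s}{t}\in\theta_n}$. Given $\alg{A}\in\class{K}$ and $a_1,\dots,a_n,b$, let $h\colon\Fm_{n+1}\to\alg{A}$ send $x_i\mapsto a_i$ and $y\mapsto b$; its image $\alg{B}\leq\alg{A}$ lies in $\class{K}$ and $\Ker h$ is a $\class{K}$-congruence since $\Fm_{n+1}/\Ker h\iso\alg{B}$. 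Now $\alg{A}\vDash\boldTheta_n(a_1,\dots,a_n,b)$ iff $\theta_n\subseteq\Ker h$, which by minimality of $\theta_n$ within its upset is equivalent to $y\in\Fg^{\Fm_{n+1},\Ker h}_{\logic{L}}(x_1,\dots,x_n)$, i.e.\ to $b\in\Fg^{\alg{B}}_{\logic{L}}(a_1,\dots,a_n)$ after transport along the isomorphism; the absolute FEP then upgrades this to $b\in\Fg^{\alg{A}}_{\logic{L}}(a_1,\dots,a_n)$. Hence $\boldTheta$ is a global EDCF, and Theorem~\ref{thm: removing locality} lets me replace it by one with $\boldTheta_n\in\boldPsi_n$, delivering (ii).

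Finally I attach (v). Because a global EDCF is a parameter-free parametrized EDCF and its defining equations carry no parameters, $\alg{A}\vDash\boldTheta_n(a_1,\dots,a_n,b)$ holds in a product $\prod_i\alg{A}_i$ exactly when it holds in every factor; reading this through the EDCF gives the factorization $\Fg^{\alg{A}}_{\logic{L}}(a_1,\dots,a_n)=\prod_i\Fg^{\alg{A}_i}_{\logic{L}}(\pi_i(a_1),\dots,\pi_i(a_n))$, so (i)$\Rightarrow$(v). For (v)$\Rightarrow$(i) I use Theorem~\ref{thm: pedcf} (applicable since the local EDCF($\boldPsi$) is in particular a parametrized local EDCF) to pass from factor determined compact filters to a parametrized EDCF, and then Corollary~\ref{cor: global edcf} to combine this parametrized EDCF with the assumed local EDCF into a global EDCF. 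I expect the main obstacle to be the step (iv)$\Rightarrow$(i): it is precisely here that purely congruence-theoretic information must be converted into a single parameter-free equational definition, and the argument only closes because the absolute FEP lets one pass from the finitely generated subalgebra $\alg{B}$ back to the ambient $\alg{A}$.
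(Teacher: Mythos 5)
Your proposal is correct and follows essentially the same route as the paper: the same cycle through (i)--(iv) with the smallest congruence exhibited as $\Cg^{\alg{A}}_{\class{K}}(\boldTheta_{n}(a_{1},\dots,a_{n},b))$, the same free-algebra construction plus absolute FEP for (iv)$\Rightarrow$(i), and the same use of Theorem~\ref{thm: pedcf} and Corollary~\ref{cor: global edcf} for (v). The only (harmless) deviations are that you prove (ii)$\Rightarrow$(iv) rather than (i)$\Rightarrow$(iv), fold the quasivariety/compactness clause into that step via finite subsets of $\boldTheta_{n}$, and argue (i)$\Rightarrow$(v) directly from parameter-freeness of the equations instead of citing Theorem~\ref{thm: pedcf} for that direction as well.
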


\begin{proof}
  (i) $\Leftrightarrow$ (v): having an EDCF on $\class{K}$ is equivalent to having both a local EDCF and a parametrized EDCF on $\class{K}$ by Corollary~\ref{cor: global edcf}, and the latter is equivalent to having factor determined compact filters on $\class{K}$ by Theorem~\ref{thm: pedcf}.

  (i) $\Leftrightarrow$ (ii): this was already proved in Theorem~\ref{thm: removing locality}.

  (iii)$\Rightarrow$(iv): it suffices to take $F \assign \Fg^{\alg{A}}_{\logic{L}}(a_{1}, \dots, a_{n})$ and $G \assign \Fg^{\alg{A}}_{\logic{L}}(b)$ and to observe that for each $\logic{L}$-filter $H$ on $\alg{A}$ we have $b \in H$ if and only if $G \subseteq H$.

  (iv)$\Rightarrow$(iii): Consider $F \assign \Fg^{\alg{A}}_{\logic{L}}(a_{1}, \dots, a_{m})$ and $G \assign \Fg^{\alg{A}}_{\logic{L}} (b_{1}, \dots, b_{n})$. Let $\theta_{i}$ for $1 \leq i \leq n$ be the smallest congruence such that $b_{i} \in \Fg^{\alg{A}, \theta_{i}}(a_{1}, \dots, a_{n})$. Then for each $\class{K}$-congruence $\theta$
\begin{align*}
  G \subseteq \Fg^{\alg{A}, \theta} F & \iff b_{i} \in \Fg^{\alg{A}, \theta}(a_{1}, \dots, a_{n}) \text{ for each } 1 \leq i \leq n \\
  & \iff \theta_{i} \leq \theta \text{ for each } 1 \leq i \leq n \\
 & \iff \theta_{1} \vee \dots \vee \theta_{n} \leq \theta,
\end{align*}
  so $\theta_{1} \vee \dots \vee \theta_{n}$ is the smallest congruence $\theta$ such that $G \subseteq \Fg^{\alg{A}, \theta} F$.

  (i)$\Rightarrow$(iv): Suppose that $\logic{L}$ has the EDCF($\boldTheta$) with respect to $\class{K}$. Consider an algebra $\alg{A} \in \class{K}$ with $a_{1}, \dots, a_{n}, b \in \alg{A}$ and a $\class{K}$-congruence $\theta \in \Con_{\class{K}} \alg{A}$. Then
\begin{align*}
  b \in \Fg^{\alg{A}, \theta}_{\logic{L}}(a_{1}, \dots, a_{n}) & \iff b / \theta \in \Fg^{\alg{A} / \theta}_{\logic{L}}(a_{1} / \theta, \dots, a_{n} / \theta) \\
  & \iff \alg{A} / \theta \vDash \boldTheta_{n}(a_{1} / \theta, \dots, a_{n} / \theta, b / \theta) \\
  & \iff \boldTheta_{n}(a_{1}, \dots, a_{n}, b) \subseteq \theta \\
  & \iff \Cg^{\alg{A}}_{\class{K}} \boldTheta_{n}(a_{1}, \dots, a_{n}, b) \leq \theta \text{ in } \Con_{\class{K}} \alg{A},
\end{align*}
  where the third equivalence holds because $\alg{A} / \theta \in \class{K}$. Thus $\Cg^{\alg{A}}_{\class{K}} \boldTheta_{n}(a_{1}, \dots, a_{n}, b)$ is the smallest $\class{K}$-congruence on $\alg{A}$ with the required property.

  (iv)$\Rightarrow$(i): Let $\Fm_{n+1}$ be the algebra of formulas of $\logic{L}$ over the set of variables $\{ x_{1}, \dots, x_{n}, y \}$. By assumption there is a smallest $\theta \in \Con_{\class{K}} \Fm_{n+1}$ such that $y \in \Fg^{\Fm_{n+1}, \theta}_{\logic{L}}(x_{1}, \dots, x_{n})$, i.e.\ $y / \theta \in \Fg^{\Fm_{n+1} / \theta}(x_{1} / \theta, \dots, x_{n} / \theta)$. We take $\boldTheta_{n}(x_{1}, \dots, x_{n}) \assign \theta$. (Note that $\theta$ is indeed a set of pairs of elements of $\Fm_{n+1}$.)

 We claim that $\logic{L}$ has the EDCF($\boldTheta$). Consider $a_{1}, \dots, a_{n}, b \in \alg{A} \in \class{K}$. Let $\alg{B}$ be the subalgebra of $\alg{A}$ generated by these elements and let ${h\colon \Fm_{n+1} \to \alg{B}}$ be the unique (surjective) homomorphism such that $h(x_{1}) = a_{1}$, \dots, $h(x_{n}) = a_{n}$, and $h(y) = b$. Then
\begin{align*}
  b \in \Fg^{\alg{A}}_{\logic{L}} (a_{1}, \dots, a_{n}) & \iff b \in \Fg^{\alg{B}}_{\logic{L}} (a_{1}, \dots, a_{n}) \\
  & \iff y \in \Fg^{\Fm_{n+1}, \Ker h} (x_{1}, \dots, x_{n}) \\
  & \iff \theta \leq \Ker h \\
  & \iff \boldTheta_{n}(x_{1}, \dots, x_{n}) \subseteq \Ker h \\ 
  & \iff \Fm_{n+1} / \Ker h \vDash \boldTheta_{n}(x_{1} / \Ker h, \dots, x_{n} / \Ker h) \\
  & \iff \alg{B} \vDash \boldTheta_{n}(a_{1}, \dots, a_{n}, b), \\
  & \iff \alg{A} \vDash \boldTheta_{n}(a_{1}, \dots, a_{n}, b),
\end{align*}
  where the first equivalence uses the absolute FEP, the second relies on the surjectivity of $h$, and the third and fourth use the definitions of $\theta$ and $\boldTheta_{n}$.

  (ii)$\Rightarrow$(i): Trivial. 

  Finally, suppose that $\class{K}$ is a quasivariety and $G \subseteq \Fg^{\alg{A}, \theta}_{\logic{L}} F$. Then $\theta$ is a directed union of a family of compact $\class{K}$-congruences $\theta_{i} \leq \theta$ with $i \in I$, so $\Fg^{\alg{A}, \theta}_{\logic{L}} F$ is a directed union of $\Fg^{\alg{A}, \theta_{i}}_{\logic{L}} F$ with $i \in I$ by Lemma~\ref{lemma: directed unions}. Because $G$ is finitely generated, it follows that $G \subseteq \Fg^{\alg{A}, \theta_{i}}_{\logic{L}} F$ for some $i \in I$. But $\theta$ is the smallest congruence such that $G \subseteq \Fg^{\alg{A}, \theta}_{\logic{L}} F$, so in fact $\theta = \theta_{i}$.
\end{proof}

  The following is a less refined but more concise form of the above theorem.

\begin{theorem} \label{thm: baby edcf}
  Let $\logic{L}$ be a logic such that $\Alg \logic{L}$ is closed under subalgebras. Then the following are equivalent:
\begin{enumerate}[(i)]
\item $\logic{L}$ has an EDCF on $\Alg \logic{L}$.
\item $\logic{L}$ has the absolute FEP and for each $\alg{A} \in \Alg \logic{L}$ and compact $F, G \in \Fi_{\logic{L}} \alg{A}$ there is a smallest $(\Alg \logic{L})$-congruence $\theta$ on $\alg{A}$ such that $G \subseteq \Fg^{\alg{A}, \theta}_{\logic{L}} F$.
\item $\logic{L}$ has the absolute FEP and for each $\alg{A} \in \Alg \logic{L}$ and $a_{1}, \dots, a_{n}, b \in \alg{A}$ there is a smallest $(\Alg \logic{L})$-congruence $\theta$ on $\alg{A}$ such that $b \in \Fg^{\alg{A}, \theta}_{\logic{L}}(a_{1}, \dots, a_{n})$.
\end{enumerate}
  If $\Alg \logic{L}$ is closed under ultraproducts, then this $(\Alg \logic{L})$-congruence $\theta$ is compact.
\end{theorem}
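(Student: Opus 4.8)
The plan is to reduce everything to Theorem~\ref{thm: edcf} specialized to $\class{K} \assign \Alg \logic{L}$, using Theorem~\ref{thm: baby ledcf} to supply the standing hypothesis of that theorem. First I would observe that since $\Alg \logic{L}$ is always closed under isomorphic images and subdirect products, the assumption that it is closed under subalgebras makes it a prevariety: each full direct product of algebras in $\Alg \logic{L}$ is in particular a subdirect product of them and hence lies in $\Alg \logic{L}$, so $\Alg \logic{L}$ is closed under arbitrary products as well. Consequently both Theorem~\ref{thm: baby ledcf} and Theorem~\ref{thm: edcf} are applicable with $\class{K} \assign \Alg \logic{L}$.

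Next I would record two facts. By Theorem~\ref{thm: baby ledcf}, the absolute FEP on $\Alg \logic{L}$ is equivalent to $\logic{L}$ having a local EDCF on $\Alg \logic{L}$. Moreover, a global EDCF is merely the special case of a local EDCF in which every family $\boldPsi_{n}$ is the singleton $\{ \boldTheta_{n} \}$, so having an EDCF implies having a local EDCF. Combining these, each of the three conditions entails the absolute FEP: condition (i) does so through the chain EDCF $\Rightarrow$ local EDCF $\Rightarrow$ absolute FEP, while (ii) and (iii) contain it outright as a conjunct. In every case, therefore, $\logic{L}$ has a local EDCF on $\Alg \logic{L}$, so the standing hypothesis of Theorem~\ref{thm: edcf} is met.

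With that hypothesis in hand I would invoke Theorem~\ref{thm: edcf} with $\class{K} = \Alg \logic{L}$, observing that its conditions (i), (iii), and (iv) coincide respectively with our (i), with the second conjunct of our (ii), and with the second conjunct of our (iii). Since Theorem~\ref{thm: edcf} asserts the equivalence of all of these, the plan is to run the cycle (i)$\Rightarrow$(ii)$\Rightarrow$(iii)$\Rightarrow$(i). For (i)$\Rightarrow$(ii), condition (i) already supplies the absolute FEP, and under the resulting local EDCF Theorem~\ref{thm: edcf} turns the EDCF into its condition (iii), giving (ii). For (ii)$\Rightarrow$(iii), the absolute FEP in (ii) supplies the local EDCF, so Theorem~\ref{thm: edcf} lets us pass from its condition (iii) to its condition (iv), and (iii) follows since the absolute FEP carries over. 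For (iii)$\Rightarrow$(i), the absolute FEP again yields the local EDCF, and Theorem~\ref{thm: edcf} converts its condition (iv) back into the EDCF.

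Finally, for the closing remark I would note that if $\Alg \logic{L}$ is in addition closed under ultraproducts, then being already a prevariety it is a quasivariety, so the last clause of Theorem~\ref{thm: edcf} (compactness of $\theta$ when $\class{K}$ is a quasivariety) applies directly. The work here is almost entirely bookkeeping: the only genuine points to verify are that closure under subalgebras upgrades the subdirect class $\Alg \logic{L}$ to a prevariety, and that the absolute FEP is present in all three cases so that the local EDCF hypothesis of Theorem~\ref{thm: edcf} is available. Matching the numbered conditions across the two theorems is then routine, and no new substantive argument is required beyond what Theorem~\ref{thm: edcf} already provides.
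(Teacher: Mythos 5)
Your proposal is correct and matches the paper's intended argument: the paper states this result without proof as ``a less refined but more concise form'' of Theorem~\ref{thm: edcf}, i.e.\ precisely the specialization to $\class{K} \assign \Alg \logic{L}$ that you carry out, with Theorem~\ref{thm: baby ledcf} supplying the absolute FEP $\Leftrightarrow$ local EDCF link. Your observations that closure under subalgebras upgrades the subdirect class $\Alg \logic{L}$ to a prevariety (hence to a quasivariety under the additional ultraproduct hypothesis) and that each of the three conditions yields the standing local-EDCF hypothesis are exactly the bookkeeping the paper leaves implicit.
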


  Echoing the remarks made at the end of the previous section, the absolute FEP is similar to but distinct from the FEP (filter extension property), which is the semantic correlate of the local DDT~\cite[p.~138]{czelakowski01}. We again postpone the discussing the relationship between these two properties to Section~\ref{sec: edcf and ddt}.

\begin{fact} \label{fact: extending edcf}
  Let $\boldTheta$ be a global EDCF for a logic $\logic{L}$ on a class of algebras~$\class{K}$. If there are formulas $\varphi_{1}, \dots, \varphi_{k+1}$ and $\psi_{1}, \dots, \psi_{k}, \psi_{k+1} \assign y$ with $k \in \omega$ such that
\begin{align*}
  & x_{1}, \dots, x_{n}, \psi_{1}, \dots, \psi_{i} \vdash_{\logic{L}} \varphi_{i+1} & & \text{for all } i \in \{ 0, \dots, k \}
\end{align*}
  and
\begin{align*}
  & \class{K} \vDash \bigwedge \boldTheta_{n}(x_{1}, \dots, x_{n}) \implies \varphi_{i} \equals \psi_{i} & & \text{for all } i \in \{ 1, \dots, k+1 \},
\end{align*}
  then $\boldTheta$ is a global equational definition of compact $\logic{L}$-filters on $\AlgI \AlgS \AlgP(\class{K})$.
\end{fact}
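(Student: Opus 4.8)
The plan is to establish, for an arbitrary $\alg{A} \in \AlgI \AlgS \AlgP(\class{K})$ and $a_{1}, \dots, a_{n}, b \in \alg{A}$, both directions of the equivalence $b \in \Fg^{\alg{A}}_{\logic{L}}(a_{1}, \dots, a_{n}) \iff \alg{A} \vDash \boldTheta_{n}(a_{1}, \dots, a_{n}, b)$; throughout I write $\tuple{a} \assign (a_{1}, \dots, a_{n})$ and $F \assign \Fg^{\alg{A}}_{\logic{L}}(\tuple{a})$. Up to isomorphism we may assume $\alg{A} \leq \alg{C} \assign \prod_{i \in I} \alg{B}_{i}$ with each $\alg{B}_{i} \in \class{K}$, and I write $\pi_{i} \colon \alg{C} \to \alg{B}_{i}$ for the projections. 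The direction from filter membership to $\boldTheta_{n}$ is routine and uses only that $\boldTheta$ is a global EDCF on $\class{K}$, together with the preservation of the satisfaction of a set of equations under subalgebras and (for a fixed assignment) products. If $b \in F$, then $b \in \Fg^{\alg{C}}_{\logic{L}}(\tuple{a})$ because $F \subseteq \Fg^{\alg{C}}_{\logic{L}}(\tuple{a})$ whenever $\alg{A} \leq \alg{C}$; hence $\pi_{i}(b) \in \Fg^{\alg{B}_{i}}_{\logic{L}}(\pi_{i}(a_{1}), \dots, \pi_{i}(a_{n}))$ for each $i$, using the inclusion $\Fg^{\alg{C}}_{\logic{L}}(\tuple{a}) \subseteq \prod_{i} \Fg^{\alg{B}_{i}}_{\logic{L}}(\pi_{i}(a_{1}), \dots, \pi_{i}(a_{n}))$ that is valid in any logic. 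The global EDCF on $\class{K}$ then gives $\alg{B}_{i} \vDash \boldTheta_{n}(\pi_{i}(a_{1}), \dots, \pi_{i}(a_{n}), \pi_{i}(b))$ for every $i$, so $\alg{C} \vDash \boldTheta_{n}(\tuple{a}, b)$, and restricting to the subalgebra $\alg{A}$ yields $\alg{A} \vDash \boldTheta_{n}(\tuple{a}, b)$.

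The substantive direction is from $\alg{A} \vDash \boldTheta_{n}(\tuple{a}, b)$ to $b \in F$, and this is where the hypotheses are used. The key point is that each implication $\bigwedge \boldTheta_{n}(x_{1}, \dots, x_{n}) \implies \varphi_{i} \equals \psi_{i}$ is a quasi-equation, and quasi-equations are preserved under $\AlgI$, $\AlgS$, and $\AlgP$; consequently they continue to hold throughout $\AlgI \AlgS \AlgP(\class{K})$, in particular in $\alg{A}$. Assume $\alg{A} \vDash \boldTheta_{n}(\tuple{a}, b)$ and fix a homomorphism $h \colon \Fm \to \alg{A}$ with $h(x_{j}) = a_{j}$ for each $j$ and $h(y) = b$. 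I would then prove by induction on $i \in \{ 1, \dots, k+1 \}$ that $h(\psi_{i}) \in F$. For the inductive step, suppose $h(\psi_{1}), \dots, h(\psi_{i-1}) \in F$ (a vacuous assumption when $i = 1$). Since $a_{1}, \dots, a_{n} \in F$, applying the rule $x_{1}, \dots, x_{n}, \psi_{1}, \dots, \psi_{i-1} \vdash_{\logic{L}} \varphi_{i}$ (the hypothesis with index $i-1$) in the model $\pair{\alg{A}}{F}$ gives $h(\varphi_{i}) \in F$. The preserved quasi-equation with index $i$, evaluated under $h$ and using the standing assumption $\alg{A} \vDash \boldTheta_{n}(\tuple{a}, b)$, gives $h(\varphi_{i}) = h(\psi_{i})$, so $h(\psi_{i}) \in F$. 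Taking $i = k+1$ and recalling $\psi_{k+1} = y$ yields $b = h(y) \in F$, completing this direction.

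I expect the organization of the second direction to be the main obstacle. The tempting strategy of extending soundness through products and subalgebras fails: the inclusion $\Fg^{\alg{C}}_{\logic{L}}(\tuple{a}) \subseteq \prod_{i} \Fg^{\alg{B}_{i}}_{\logic{L}}(\pi_{i}(a_{1}), \dots, \pi_{i}(a_{n}))$ may be strict (this is precisely the failure of factor-determinedness), and one would in any case still have to descend from $\alg{C}$ back to $\alg{A}$. The hypotheses sidestep both difficulties by supplying a derivation carried out entirely inside $\pair{\alg{A}}{F}$: the chain of rules builds $b$ up from the generators $\tuple{a}$ within $F$, while the equations of $\boldTheta_{n}$ repeatedly convert each derived $\varphi_{i}$ into the next premise $\psi_{i}$. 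The one point demanding care is that $\varphi_{i}$ and $\psi_{i}$ may contain variables besides $x_{1}, \dots, x_{n}, y$; this is harmless provided $h$ is fixed once and for all, since the universally quantified quasi-equation then applies to the chosen values of those extra variables.
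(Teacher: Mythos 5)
Your proof is correct, and its substantive direction coincides with the paper's: the implications $\bigwedge \boldTheta_{n} \implies \varphi_{i} \equals \psi_{i}$ are (possibly infinitary) Horn formulas preserved under $\AlgI$, $\AlgS$, and $\AlgP$, and the chain of rules, alternated with these equalities under a single fixed homomorphism $h$, walks $b = h(\psi_{k+1})$ into $F$. The only genuine divergence is in the easy direction. To get $b \in \Fg^{\alg{A}}_{\logic{L}}(a_{1}, \dots, a_{n}) \Rightarrow \alg{A} \vDash \boldTheta_{n}(a_{1}, \dots, a_{n}, b)$, the paper invokes Theorem~\ref{thm: pledcf} (every logic has a parametrized local EDCF $\boldPsi'$ on the class of all algebras), notes that on $\class{K}$ each $\bigwedge \boldTheta'_{n}$ implies $\bigwedge \boldTheta_{n}$ because $\boldTheta$ is an EDCF there, and transports this Horn implication to $\AlgI \AlgS \AlgP(\class{K})$; you instead embed $\alg{A}$ into a product $\prod_{i} \alg{B}_{i}$ of members of $\class{K}$, push filter membership down to each factor via the always-valid inclusion $\Fg^{\alg{C}}_{\logic{L}}(\tuple{a}) \subseteq \prod_{i} \Fg^{\alg{B}_{i}}_{\logic{L}}(\pi_{i}(a_{1}), \dots, \pi_{i}(a_{n}))$, apply the EDCF on each $\alg{B}_{i}$, and lift the equations back through the product and down to the subalgebra. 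Both arguments are sound; yours is more elementary and self-contained, while the paper's reuses machinery it has already built and avoids choosing a concrete product representation.
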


\begin{proof}
  Given $a_{1}, \dots, a_{n}, b \in \alg{A} \in \AlgI \AlgS \AlgP (\class{K})$, let $F \assign \Fg^{\alg{A}}_{\logic{L}} (a_{1}, \dots, a_{n})$. Because the validity of the above implication is preserved under isomorphic images, subalgebras, and products,
\begin{align*}
  \AlgI \AlgS \AlgP (\class{K}) \vDash \bigwedge \boldTheta_{n}(x_{1}, \dots, x_{n}) \implies \varphi_{i} \equals \psi_{i}.
\end{align*}
  If $\alg{A} \vDash \boldTheta_{n}(a_{1}, \dots, a_{n}, b)$, then applying any homomorphism $h\colon \Fm \to \alg{A}$ such that $h(x_{1}) = a_{1}, \dots, h(x_{n}) = a_{n}, h(y) = b$ to the rules assumed to be valid yields that
\begin{align*}
  h(\varphi_{1}) \in F \implies h(\psi_{1}) \in F \implies h(\varphi_{2}) \in F \implies \dots \implies h(\psi_{k+1}) \in F,
\end{align*}
  so $b = h(y) = h(\psi_{k+1}) \in F$.

  Conversely, $\logic{L}$ has some parametrized local EDCF on the class of all algebras (in the given signature) by Theorem~\ref{thm: pledcf}, say $\boldPsi'$. Because $\boldTheta$ is a global EDCF for $\logic{L}$ on $\class{K}$, for each $\boldTheta'_{n} \in \boldPsi'_{n}$
\begin{align*}
  \class{K} \vDash \bigwedge \boldTheta'_{n}(x_{1}, \dots, x_{n}, y, \tuple{z}) \implies \bigwedge \boldTheta_{n}(x_{1}, \dots, x_{n}, y).
\end{align*}
  This implication extends to $\AlgI \AlgS \AlgP(\class{K})$. But then the parametrized local EDCF($\boldPsi'$) ensures that if $b \in F$, then $\alg{A} \vDash \boldTheta_{n}(a_{1}, \dots, a_{n}, b)$.
\end{proof}

  As an example, consider the strong three-valued Kleene logic $\KL$, or more precisely its variant with the top and bottom constants. This is a logic in the signature which consists of the binary lattice connectives $\wedge$ and $\vee$, the top and bottom constants $1$ and $0$, and a unary negation connective $\neg$. It is determined by the matrix $\langle \Kthree, \{ 1 \} \rangle$, where $\Kthree$ is the three-element bounded chain $0 < \onehalf < 1$ with the operation $\neg\colon x \mapsto 1 - x$.

\begin{example} \label{example: kleene has edcf}
  $\KL$ has a global EDCF on $\Alg \KL = \AlgH \AlgS \AlgP(\Kthree)$, namely
\begin{align*}
  \boldTheta_{n}(x_{1}, \dots, x_{n}, y) \assign \{ x_{1} \wedge \dots \wedge x_{n} \leq \neg x_{1} \vee \dots \vee \neg x_{n} \vee y \}.
\end{align*}
\end{example}

\begin{proof}
  The only non-trivial $\KL$-filter on $\Kthree$ is $\{ 1 \}$, so $\boldTheta$ is easily seen to be a global EDCF for $\KL$ on $\Kthree$. Moreover, taking $\varphi \assign x_{1} \wedge \dots \wedge x_{n}$,
\begin{align*}
  \Kthree \vDash \bigwedge \boldTheta_{n}(x_{1}, \dots, x_{n}, y) \implies \varphi \equals \varphi \wedge (\neg \varphi \vee y).
\end{align*}
  But then the rules
\begin{align*}
  & x_{1}, \dots, x_{n} \vdash_{\KL} \varphi, & & \varphi \wedge (\neg \varphi \vee y) \vdash_{\KL} y,
\end{align*}
  ensure that $\boldTheta$ is an EDCF for $\KL$ on $\AlgI \AlgS \AlgP(\Kthree)$ by Fact~\ref{fact: extending edcf}, taking $\class{K} \assign \{ \Kthree \}$.

  The class $\Alg \KL$ is known to be the variety generated by $\Kthree$. Let us give a brief self-contained proof of this fact. Because $\Kthree$ is a finite algebra with a lattice reduct, by J\'{o}nsson's lemma each subdirectly irreducible algebra in $\AlgH \AlgS \AlgP(\Kthree)$ lies in $\AlgH \AlgS \AlgPU (\Kthree) = \AlgI \AlgS (\Kthree)$. But each algebra in $\AlgH \AlgS \AlgP(\Kthree)$ is a subdirect product of subdirectly irreducible algebras in this variety, so $\AlgH \AlgS \AlgP(\Kthree) = \AlgI \AlgS \AlgP(\Kthree)$. In addition, $\Alg \KL \subseteq \AlgH \AlgS \AlgP(\Kthree)$ by Fact~\ref{fact: alg l variety}. Conversely, $\Alg \KL$ is closed under subdirect products and each algebra in $\AlgI \AlgS \AlgP(\Kthree)$ is a subdirect product of algebras in $\AlgS(\Kthree)$. Thus $\AlgI \AlgS \AlgP(\Kthree) \subseteq \Alg \KL$, since $\AlgS(\Kthree) \subseteq \Alg \KL$.
\end{proof}

  The same argument with the four-element subdirectly irreducible De~Morgan algebra $\DMfour$ instead of $\Kthree$ shows that the four-valued Exactly True Logic $\ETL$ (see~\cite{albuquerque+prenosil+rivieccio17,prenosil23}), or more precisely its variant with the top and bottom constants, has EDCF($\boldTheta$) with respect to $\Alg \ETL = \AlgI \AlgS \AlgP(\DMfour)$. This class is known as the variety of De~Morgan algebras, while its subvariety $\AlgI \AlgS \AlgP(\Kthree)$ is known as the variety of Kleene algebras.

  A similar example but with a different EDCF is provided by the Logic of Paradox $\LP$, or more precisely its variant with the top and bottom constants. This is the logic determined by the matrix $\langle \Kthree, \{ \onehalf, 1 \} \rangle$.

\begin{example}
  $\LP$ has a global EDCF on $\Alg \LP = \AlgH \AlgS \AlgP(\Kthree)$, namely
\begin{align*}
  \boldTheta_{n}(x_{1}, \dots, x_{n}, y) \assign \{ x_{1} \wedge \dots \wedge x_{n} \wedge \neg y \leq y \},
\end{align*}
  where the case of $n \assign 0$ is interpreted as $\boldTheta_{0}(y) \assign \{ \neg y \leq y \}$.
\end{example}

\begin{proof}
  The only non-trivial $\LP$-filter on $\Kthree$ is $\{ \onehalf, 1 \}$, so $\boldTheta$ is easily seen to be a global EDCF for $\LP$ on $\Kthree$. Moreover, taking again $\varphi \assign x_{1} \wedge \dots \wedge x_{n}$,
\begin{align*}
  \Kthree \vDash \bigwedge \boldTheta_{n}(x_{1}, \dots, x_{n}, y) \implies y \equals (\varphi \wedge \neg y) \vee y.
\end{align*}
  But then the rule
\begin{align*}
  & x_{1}, \dots, x_{n} \vdash_{\LP} (\varphi \wedge \neg y) \vee y
\end{align*}
  ensures that $\boldTheta$ is an EDCF for $\LP$ on $\AlgI \AlgS \AlgP(\Kthree)$ by Fact~\ref{fact: extending edcf}, taking $\class{K} \assign \{ \Kthree \}$. In more detail, this rule is derivable from the rules
\begin{align*}
  & \emptyset \vdash_{\LP} y \vee \neg y, & & x, y \vdash_{\LP} x \wedge y, & & x \wedge (y \vee z) \vdash_{\LP} (x \wedge y) \vee z.
\end{align*}
  The claim that $\Alg \LP = \AlgH \AlgS \AlgP(\Kthree)$ is proved as in the previous example.
\end{proof}

  The top and bottom constants are in fact typically not taken to be part of the signature of $\KL$, $\ETL$, and $\LP$, perhaps for reasons of tradition. Removing them from $\KL$ and $\ETL$ results in logics which almost have a global EDCF. In contrast, the above EDCF for $\LP$ does not rely on the constants.

\section{Finitary EDCF}
\label{sec: finitary edcf}

  The various forms of the EDCF considered so far define the $(n+1)$-ary relation $b \in \Fg^{\alg{A}}_{\logic{L}}(a_{1}, \dots, a_{n})$ in terms of conditions beyond the expressive power of first-order logic: the parametrized EDCF allows for existential quantification over infinitely many variables, the local EDCF allows for infinite disjunctions, and all forms of the EDCF allow for infinite conjunctions of equations. In this section, we consider the case where the relation is in fact definable by a first-order formula. This will have the effect of finitizing the above disjunctions and conjunctions (and therefore also the existential quantification).

\begin{definition}
  A logic $\logic{L}$ has \emph{(first-order) definable $n$-generated filters} on a class of algebras $\class{K}$ (for a given $n \in \omega$) if there is a first-order formula in $n+1$ free variables $\varphi_{n}(x_{1}, \dots, x_{n}, y)$ such that
\begin{align*}
  b \in \Fg^{\alg{A}}_{\logic{L}}(a_{1}, \dots, a_{n}) \iff \alg{A} \vDash \varphi_{n}(a_{1}, \dots, a_{n}, b).
\end{align*}
  The logic $\logic{L}$ has \emph{definable compact filters} on $\class{K}$ if it has definable $n$-generated filters on $\class{K}$ for each $n \in \omega$. It has \emph{definable principal filters}, or \emph{DPF} for short, on $\class{K}$ if it has definable $1$-generated filters on $\class{K}$.
\end{definition}

  The first-order definability of principal filters was studied by Czelakowski~\cite[p.~132]{czelakowski01}, who observed that the definability of $1$-generated filters implies the definability of $n$-generated filters for all $n \geq 1$~\cite[Theorem~2.2.1]{czelakowski01}. We can further observe that it implies the definability of $0$-generated filters too: in case $\logic{L}$ has no theorems take $\varphi_{0}(x) \assign \bot$, and in case $\logic{L}$ has a theorem take $\varphi_{0}(x) \assign \forall y \; \varphi_{1}(y, x)$. Czelakowski's DPF is therefore precisely the first-order counterpart of our EDCF.

\begin{definition}
  A parametrized local EDCF $\boldPsi$ is \emph{finitary} if each $\boldPsi_{n}$ for $n \in \omega$ is a finite family of finite sets of formulas.
\end{definition}

  Because the local EDCF, the parametrized EDCF, and the global EDCF are defined as special cases of the the parametrized local EDCF, the above definition also covers the finitary local, parametrized, and global EDCF.

  While the global EDCF is frequently finitary, typically the local EDCF is not (much like the global DDT is frequently finitary, but typically the local DDT is not). However, examples of logics with a finitary local EDCF but not a global EDCF do exist: the paraconsistent Weak Kleene logic is one (Example~\ref{example: pwk local edcf}).

  The restriction to finitary forms of the EDCF allows us to relate our present results about the EDCF to the work of Campercholi and Vaggione~\cite{campercholi+vaggione16}, which in effect answers the following question: when does a first-order definable relation admit a definition of a particularly simple syntactic form? The syntactic forms considered by their paper in particular include forms which correspond directly to the finitary EDCF and its local, parametrized, and parametrized local versions. The following theorem is an amalgam of some of the main results of~\cite{campercholi+vaggione16} restricted to classes closed under ultraproducts.

  (In the theorem, we emphasize that homomorphisms of structures are understood in the standard model theoretic sense of maps which preserve functions and relations. In particular, they need not be strict homomorphisms.)

\begin{theorem} \label{thm: campercholi vaggione}
   Let $\lang{L}_{R}$ be the expansion of a first-order language $\lang{L}$ by a relation symbol $R$, and let $\class{K}_{R}$ be a class of $\lang{L}_{R}$-structures closed under ultraproducts such that each homomorphism $h\colon \mathbb{A} \to \mathbb{B}$ between the $R$-free reducts of $\lang{L}_{R}$-structures $\mathbb{A}_{R}, \mathbb{B}_{R} \in \class{K}_{R}$ is a homomorphism of $\lang{L}_{R}$-structures $h\colon \mathbb{A}_{R} \to \mathbb{B}_{R}$. Then:
\begin{enumerate}[(i)]
\item $R$ is definable in $\class{K}_{R}$ by a formula $\varphi$ in $\lang{L}$ of the form $\exists \bigvee \bigwedge \At$.
\item If $\class{K}_{R}$ is closed under substructures, we can take $\varphi$ of the form $\bigvee \bigwedge \At$.
\item If $\class{K}_{R}$ is closed under products, we can take $\varphi$ of the form $\exists \bigwedge \At$.
\item If $\class{K}_{R}$ is closed under both constructions, we can take $\varphi$ of the form $\bigwedge \At$.
\end{enumerate}
\end{theorem}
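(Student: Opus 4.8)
The plan is to define $R$ by an infinitary ``diagram'' formula whose syntactic shape is dictated by the closure properties of $\class{K}_{R}$. Everything rests on the following equivalence, valid for every $\mathbb{A}_{R} \in \class{K}_{R}$ and every tuple $\tuple{a}$, which is the one and only place the homomorphism hypothesis enters:
\begin{align*}
  \mathbb{A}_{R} \vDash R(\tuple{a}) \iff & \text{ there are } \mathbb{B}_{R} \in \class{K}_{R} \text{ and a tuple } \tuple{b} \text{ with } \mathbb{B}_{R} \vDash R(\tuple{b}), \\
  & \text{ and a homomorphism } h \colon \mathbb{B} \to \mathbb{A} \text{ with } h(\tuple{b}) = \tuple{a}.
\end{align*}
The right-to-left direction is immediate from the hypothesis: such an $h$ between the reducts is automatically a homomorphism $\mathbb{B}_{R} \to \mathbb{A}_{R}$, and so preserves $R$. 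The left-to-right direction is trivial, witnessed by $\mathbb{B}_{R} \assign \mathbb{A}_{R}$, $\tuple{b} \assign \tuple{a}$, and $h \assign \mathrm{id}$. Call a pair $(\mathbb{B}_{R}, \tuple{b})$ with $\mathbb{B}_{R} \in \class{K}_{R}$ and $\mathbb{B}_{R} \vDash R(\tuple{b})$ an \emph{admissible pointed structure}.

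For a fixed admissible $(\mathbb{B}_{R}, \tuple{b})$, the existence of a homomorphism $h\colon \mathbb{B} \to \mathbb{A}$ with $h(\tuple{b}) = \tuple{a}$ is expressed by its \emph{positive existential diagram} $\delta_{(\mathbb{B}, \tuple{b})}(\tuple{x})$: take a variable for each element of $\mathbb{B}$, substitute $\tuple{x}$ for the variables naming $\tuple{b}$, existentially quantify the rest, and conjoin all atomic $\lang{L}$-formulas true of these elements in $\mathbb{B}$. This is a formula of the form $\exists \bigwedge \At$, and $\mathbb{A} \vDash \delta_{(\mathbb{B}, \tuple{b})}(\tuple{a})$ says exactly that the required $h$ exists. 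Disjoining $\delta_{(\mathbb{B}, \tuple{b})}$ over all admissible pointed structures therefore yields a formula of the form $\exists \bigvee \bigwedge \At$ defining $R$, which is case~(i). Two remarks make the closure hypotheses usable: since $R$ is part of the structure, it is interpreted coordinatewise in products and by restriction in substructures, so the condition $\mathbb{B}_{R} \vDash R(\tuple{b})$ is automatically inherited by any product or substructure in $\class{K}_{R}$ containing the relevant tuple.

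The remaining three forms arise by specializing the test structures. For~(ii), closure under substructures lets us replace each $\mathbb{B}$ by the substructure $\langle \tuple{b} \rangle$ generated by $\tuple{b}$; it lies in $\class{K}_{R}$ and still satisfies $R(\tuple{b})$, and since every element is a term in $\tuple{b}$ no existential variables survive, so $\delta_{(\langle \tuple{b} \rangle, \tuple{b})}$ is the atomic type $\bigwedge \At$ of $\tuple{b}$ and the disjunction takes the form $\bigvee \bigwedge \At$. For~(iii), closure under products collapses the disjunction to a single disjunct: form the product $\mathbb{B}^{*}_{R} \assign \prod_{i} \mathbb{B}_{i,R}$ of representatives of the admissible pointed structures, with $\tuple{b}^{*}$ the tuple whose $i$-th coordinate is $\tuple{b}_{i}$. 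Then $\mathbb{B}^{*}_{R} \in \class{K}_{R}$ and $\mathbb{B}^{*}_{R} \vDash R(\tuple{b}^{*})$ coordinatewise, and $(\mathbb{B}^{*}_{R}, \tuple{b}^{*})$ is \emph{universal}: whenever $\mathbb{A}_{R} \vDash R(\tuple{a})$ the pair $(\mathbb{A}_{R}, \tuple{a})$ occurs as a factor, so the projection $\pi_{i}\colon \mathbb{B}^{*} \to \mathbb{A}$ is a homomorphism carrying $\tuple{b}^{*}$ to $\tuple{a}$. Hence $R$ is defined by the single formula $\delta_{(\mathbb{B}^{*}, \tuple{b}^{*})}$ of the form $\exists \bigwedge \At$. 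Case~(iv) combines both moves: passing from $\mathbb{B}^{*}$ to $\langle \tuple{b}^{*} \rangle$ gives one generated universal pointed structure whose diagram is a single atomic conjunction $\bigwedge \At$.

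The main obstacle is not algebraic but one of legitimacy: a priori the disjunction (in~(i),~(ii)) and the product (in~(iii),~(iv)) range over a proper class of admissible pointed structures, whereas an $L_{\infty\infty}$-formula may only disjoin over a set and a product must be set-indexed. In the substructure-closed cases~(ii) and~(iv) this is harmless, since the generated structures $\langle \tuple{b} \rangle$ have cardinality at most $\aleph_{0} + |\lang{L}|$, there is only a set of them up to isomorphism, and every $R$-tuple $\tuple{a}$ is already served by its own representative $\langle \tuple{a} \rangle$; no size reduction is required. The difficulty is thus concentrated in~(i) and~(iii), where small substructures need not lie in $\class{K}_{R}$, and this is precisely where closure under ultraproducts is indispensable. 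A compactness argument run through an ultraproduct of the class, which is the technical core imported from Campercholi and Vaggione~\cite{campercholi+vaggione16}, shows that it suffices to let the test structures range over those of cardinality at most $\kappa \assign \aleph_{0} + |\lang{L}| + |\tuple{x}|$: the positive existential diagram forced on an $R$-tuple by $R$ is already realized at this bounded size, so the class-sized disjunction (respectively product) may be replaced by a set-sized one without altering the defined relation. This bounded-size reduction completes all four cases.
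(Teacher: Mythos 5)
The paper does not actually prove this theorem: it is imported wholesale from Campercholi and Vaggione \cite{campercholi+vaggione16}, so there is no in-paper argument to compare yours against. Judged on its own terms, your proof has a genuine gap, and it sits exactly where you defer to the cited paper. The formulas $\exists \bigvee \bigwedge \At$, $\bigvee \bigwedge \At$, etc.\ in this theorem are \emph{first-order} formulas: finitely many quantified variables, a finite disjunction of finite conjunctions of atomic formulas. This is forced by how the theorem is used — in Theorem~\ref{thm: finitary edcf} it must output a \emph{finitary} EDCF, and the paper explicitly calls item (ii) the first-order counterpart of Theorem~\ref{thm: ledcf}. Your construction produces the infinitary diagram formula: the positive existential diagram of an infinite pointed structure has infinitely many conjuncts and infinitely many existentially quantified variables, and the disjunction over admissible pointed structures remains infinite even after being cut down to a set. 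What you have proved is essentially the infinitary version of the theorem, which is already the content of Theorems~\ref{thm: pledcf}, \ref{thm: pedcf} and \ref{thm: ledcf} and which holds with no ultraproduct hypothesis at all.

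The missing step --- and the only place where closure under ultraproducts genuinely enters --- is the compactness argument that replaces the infinite disjunctions, conjunctions, and quantifier blocks by finite ones. For instance, in case (iv) one shows that if no finite subset of the atomic type $\Sigma(\tuple{x})$ of the generic tuple defines $R$, then an ultraproduct of the counterexamples, indexed by the finite subsets of $\Sigma$, lies in $\class{K}_{R}$, satisfies all of $\Sigma$ at some tuple, and yet fails $R$ there, contradicting the infinitary definition; cases (i)--(iii) need analogous (and somewhat more delicate) finitization arguments. You instead describe the role of ultraproducts as a ``class to set'' size reduction and assert that cases (ii) and (iv) need no reduction at all; but without finitization those cases yield only infinitary definitions, and the ultraproduct hypothesis would then be doing no work anywhere --- which cannot be right, since dropping it makes the first-order statement false (e.g.\ the class of algebras in $\BAO$ with $R(x,y)$ interpreted as $y \in \Fg^{\alg{A}}_{\GlobK}(x)$ is closed under substructures, yet $R$ is not definable by any finite $\bigvee \bigwedge \At$ formula). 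As written, the proposal establishes a weaker statement and outsources the actual content of the theorem to the reference.
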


\begin{theorem} \label{thm: finitary edcf}
  A logic has the finitary EDCF on a quasivariety $\class{K}$ if and only if has both the EDCF and the DPF on $\class{K}$. The same equivalence holds for the parametrized, local, and parametrized local forms of the EDCF.
\end{theorem}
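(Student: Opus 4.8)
The forward implication is routine. In each of the four cases a finitary EDCF is in particular an EDCF, and since it is given by finite families of finite sets of equations, it exhibits the relation $b \in \Fg^{\alg{A}}_{\logic{L}}(a_{1}, \dots, a_{n})$ as definable by a first-order formula (a finite disjunction of existentially quantified finite conjunctions of equations); in particular $\logic{L}$ has the DPF. The substance of the theorem is the converse, which I would establish by invoking the theorem of Campercholi and Vaggione (Theorem~\ref{thm: campercholi vaggione}). Fix $n \in \omega$, let $\lang{L}$ be the algebraic signature of $\logic{L}$, adjoin a fresh $(n+1)$-ary relation symbol $R$, and let $\class{K}_{R}$ be the class of all expansions of algebras $\alg{A} \in \class{K}$ in which $R$ holds of $(a_{1}, \dots, a_{n}, b)$ exactly when $b \in \Fg^{\alg{A}}_{\logic{L}}(a_{1}, \dots, a_{n})$. (Note that having an EDCF of any of these forms on $\class{K}$ forces $\class{K} \supseteq \Alg \logic{L}$, so Theorems~\ref{thm: ledcf} and~\ref{thm: pedcf} are applicable.)

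Two hypotheses of Theorem~\ref{thm: campercholi vaggione} must then be checked. The homomorphism condition holds for every logic: if $h \colon \alg{A} \to \alg{B}$ is an algebra homomorphism with $\alg{A}, \alg{B} \in \class{K}$, then $h[\Fg^{\alg{A}}_{\logic{L}} X] \subseteq \Fg^{\alg{B}}_{\logic{L}}(h[X])$, because $h^{-1}[G]$ is an $\logic{L}$-filter whenever $G$ is; hence $h$ preserves $R$ and is a homomorphism of the expanded structures. Closure of $\class{K}_{R}$ under ultraproducts is where the DPF enters: the DPF provides a first-order definition of $1$-generated filters, which extends to all $n$-generated filters by \cite[Theorem~2.2.1]{czelakowski01} (and to $0$-generated filters by the observation preceding Theorem~\ref{thm: finitary edcf}), so each relation $R$ is first-order definable on $\class{K}$; since $\class{K}$ is a quasivariety, and hence closed under ultraproducts, {\L}o\'{s}'s theorem shows this relation is preserved by ultraproducts, whence $\class{K}_{R}$ is closed under ultraproducts. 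With both hypotheses in place, Theorem~\ref{thm: campercholi vaggione}(i) already yields a definition of $R$ of the form $\exists \bigvee \bigwedge \At$, i.e.\ a finitary parametrized local equational definition of $n$-generated filters.

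To obtain the three stronger forms I would read off the additional closure properties of $\class{K}_{R}$ from the corresponding form of the EDCF. Closure of $\class{K}_{R}$ under substructures says precisely that the $\logic{L}$-filter relation on a subalgebra is the restriction of that on the ambient algebra, i.e.\ that $\logic{L}$ has the absolute FEP, which by Theorem~\ref{thm: ledcf} is equivalent to the local EDCF. Closure under products says precisely that $\logic{L}$ has absolute factor determined compact filters, which by Theorem~\ref{thm: pedcf} is equivalent to the parametrized EDCF. The global EDCF, being the conjunction of the local and the parametrized EDCF (Corollary~\ref{cor: global edcf}), corresponds to closure under both. Applying parts (ii), (iii), and (iv) of Theorem~\ref{thm: campercholi vaggione} then yields definitions of $R$ of the forms $\bigvee \bigwedge \At$, $\exists \bigwedge \At$, and $\bigwedge \At$ respectively. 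Since atomic formulas in the purely algebraic language $\lang{L}$ are equations, a finite conjunction $\bigwedge \At$ is a finite set $\boldTheta_{n}$ of equations, a finite disjunction $\bigvee \bigwedge \At$ is a finite family $\boldPsi_{n}$ of such sets, and the existentially quantified variables play the role of the parameter tuple $\tuple{z}$; thus the four syntactic forms are exactly the finitary global, local, parametrized, and parametrized local EDCF for $n$-generated filters. Carrying this out for every $n \in \omega$ completes the converse. The main obstacle is the verification of the two hypotheses of Theorem~\ref{thm: campercholi vaggione}—in particular checking that the DPF genuinely yields closure of $\class{K}_{R}$ under ultraproducts—together with the careful identification of the model-theoretic closure properties of $\class{K}_{R}$ under substructures and products with the filter-theoretic properties characterized in Theorems~\ref{thm: ledcf} and~\ref{thm: pedcf}.
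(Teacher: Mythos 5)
Your proposal is correct and follows essentially the same route as the paper: the same expansion of $\class{K}$ to $\class{K}_{R}$, the same verification of the homomorphism and ultraproduct hypotheses (the latter via the DPF, Czelakowski's reduction of $n$-generated to $1$-generated definability, and {\L}o\'{s}'s theorem), and the same appeal to the four parts of Theorem~\ref{thm: campercholi vaggione}, with the closure of $\class{K}_{R}$ under substructures and products read off from Theorems~\ref{thm: ledcf} and~\ref{thm: pedcf} exactly as in the paper.
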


\begin{proof}
 The left-to-right implications are clear, in particular observe that if $\logic{L}$ has the finitary parametrized local EDCF on $\class{K}$ with respect to~$\boldPsi$, then it has the DPF on $\class{K}$ with respect to
\begin{align*}
  \varphi_{n}(x_{1}, \dots, x_{n}, y) \assign \exists \tuple{z} \bigvee \set{\bigwedge \boldTheta_{n}(x_{1}, \dots, x_{n}, y, \tuple{z})}{\boldTheta_{n} \in \boldPsi_{n}}.
\end{align*}
  Conversely, let $\lang{L}$ be the algebraic signature of the given logic $\logic{L}$, and let $\lang{L}_{R}$ be the expansion of $\lang{L}$ by a relation symbol $R$ of arity $n+1$ for $n \in \omega$. Let $\class{K}_{R}$ be the class of expansions of algebras in $\class{K}$ by the relation $R$ such that
\begin{align*}
  \langle a_{1}, \dots, a_{n}, b \rangle \in R \iff b \in \Fg^{\alg{A}}_{\logic{L}} (a_{1}, \dots, a_{n}).
\end{align*}
  We now verify that the assumptions of Theorem~\ref{thm: campercholi vaggione} are satisfied. If the logic $\logic{L}$ has the DPF on $\class{K}$, then the class $\class{K}_{R}$ is closed under ultraproducts by {\L}o\'{s}'s theorem on the satisfaction of first-order formulas in ultraproducts. The condition that each homomorphism $h\colon \alg{A} \to \alg{B}$ of algebras $\alg{A}, \alg{B} \in \class{K}$ is in fact a homomorphism of structures $h\colon \pair{\alg{A}}{R} \to \pair{\alg{B}}{R}$ is equivalent to the condition that $b \in \Fg^{\alg{A}}_{\logic{L}} (a_{1}, \dots, a_{n})$ implies $h(b) \in \Fg^{\alg{A}}_{\logic{L}} (h(a_{1}), \dots, h(a_{n}))$ for all ${a_{1}, \dots, a_{n}, b \in \alg{A}}$. This holds because if $h(b) \notin \Fg^{\alg{A}}_{\logic{L}} (h(a_{1}), \dots, h(a_{n}))$, then there is some $\logic{L}$-filter $G$ on $\alg{B}$ with $h(a_{1}), \dots, h(a_{n}) \in G$ but $h(b) \notin G$, which yields an $\logic{L}$-filter $F \assign h^{-1}[G]$ on $\alg{A} \in \Fi_{\logic{L}}$ with $a_{1}, \dots, a_{n} \in F$ but $b \notin F$, thus $b \notin \Fg^{\alg{A}}_{\logic{L}} (a_{1}, \dots, a_{n})$.

  Theorem~\ref{thm: campercholi vaggione} now yields a formula $\varphi$ of the form $\exists \bigvee \bigwedge \At$ in the signature $\lang{L}$ which gives us the finitary EDCF. If $\logic{L}$ has the local EDCF, then by (the easy direction of) Theorem~\ref{thm: ledcf} the class $\class{K}_{R}$ is closed under substructures, so by Theorem~\ref{thm: campercholi vaggione} we can take $\varphi$ of the form $\bigvee \bigwedge \At$. Similarly, if $\logic{L}$ has the parametrized EDCF, then by (the easy direction of) Theorem~\ref{thm: pedcf} the class $\class{K}_{R}$ is closed under products, so by Theorem~\ref{thm: campercholi vaggione} we can take $\varphi$ of the form $\exists \bigwedge \At$. Finally, if $\logic{L}$ has the EDCF, both closure conditions apply, so we can take $\varphi$ of the form $\bigwedge \At$.
\end{proof}

  Notice that for the finitary case we could also derive our characterizations of the different forms of the EDCF from Theorem~\ref{thm: campercholi vaggione}. For example, item (ii) of this theorem corresponds directly to our equivalence between the local EDCF and the absolute FEP (restricted to logics with DPF).

\begin{corollary}
  If $\logic{L}$ has the DPF and the parametrized local EDCF($\boldPsi$) on a quasivariety $\class{K}$, then it has the parametrized local EDCF($\boldPsi'$) for some sequence $\boldPsi'$ such that each $\boldPsi'_{n}$ is finite and each $\boldTheta'_{n} \in \boldPsi'_{n}$ is a finite subset of some $\boldTheta_{n} \in \boldPsi_{n}$.
\end{corollary}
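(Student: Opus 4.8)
The plan is to reduce the whole statement to the first-order definability of compact filters and then run two successive compactness arguments. First I would record that the DPF, together with the already-cited extension result (definability of $1$-generated filters implies definability of $n$-generated filters for every $n$, including $n = 0$), furnishes for each $n \in \omega$ a first-order formula $\varphi_{n}(x_{1}, \dots, x_{n}, y)$ with $b \in \Fg^{\alg{A}}_{\logic{L}}(a_{1}, \dots, a_{n}) \iff \alg{A} \vDash \varphi_{n}(a_{1}, \dots, a_{n}, b)$ for all $\alg{A} \in \class{K}$. Fixing $n$, the EDCF($\boldPsi$) then reads $\class{K} \vDash \varphi_{n} \leftrightarrow \exists \tuple{z} \bigvee_{\boldTheta_{n} \in \boldPsi_{n}} \bigwedge \boldTheta_{n}$, whose soundness half gives $\class{K} \vDash \exists \tuple{z} \bigwedge \boldTheta_{n} \rightarrow \varphi_{n}$ for each individual $\boldTheta_{n} \in \boldPsi_{n}$.

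The key step, which I expect to be the main obstacle, is to replace each $\boldTheta_{n}$ by a finite subset while preserving this soundness: for every $\boldTheta_{n} \in \boldPsi_{n}$ there should be a finite $S_{\boldTheta_{n}} \subseteq \boldTheta_{n}$ with $\class{K} \vDash \exists \tuple{z} \bigwedge S_{\boldTheta_{n}} \rightarrow \varphi_{n}$. The subtlety is that passing to a subset \emph{weakens} the antecedent, so soundness could a priori break. I would argue by contradiction using a directed ultraproduct: if every finite $S \subseteq \boldTheta_{n}$ admitted a witness $\alg{A}_{S} \in \class{K}$ with elements $\bar{a}_{S}, \tuple{c}_{S}$ satisfying $\bigwedge S(\bar{a}_{S}, \tuple{c}_{S})$ but not $\varphi_{n}(\bar{a}_{S})$, then forming the ultraproduct over the directed poset of finite subsets of $\boldTheta_{n}$ with respect to an ultrafilter containing all principal upsets and invoking {\L}o\'{s}'s theorem, the limit tuple would satisfy every single equation of $\boldTheta_{n}$, hence $\exists \tuple{z} \bigwedge \boldTheta_{n}$; the soundness of $\boldPsi$ and the closure of the quasivariety $\class{K}$ under ultraproducts would then force $\varphi_{n}$ in the limit, whereas {\L}o\'{s}'s theorem applied to the \emph{first-order} $\varphi_{n}$ would give $\neg \varphi_{n}$ there, a contradiction.

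With these finite subsets in hand, each $\psi_{\boldTheta_{n}} \assign \exists \tuple{z} \bigwedge S_{\boldTheta_{n}}$ is a first-order formula with $\class{K} \vDash \psi_{\boldTheta_{n}} \rightarrow \varphi_{n}$, while completeness of $\boldPsi$ together with $\bigwedge \boldTheta_{n} \rightarrow \bigwedge S_{\boldTheta_{n}}$ yields $\class{K} \vDash \varphi_{n} \rightarrow \bigvee_{\boldTheta_{n} \in \boldPsi_{n}} \psi_{\boldTheta_{n}}$. Since $\class{K}$ is a quasivariety and hence an elementary class, a second compactness argument applied to the inconsistent set obtained by adjoining $\varphi_{n}(\bar{a})$ and all $\neg \psi_{\boldTheta_{n}}(\bar{a})$ for fresh constants $\bar{a}$ produces finitely many $\boldTheta_{n}^{1}, \dots, \boldTheta_{n}^{m} \in \boldPsi_{n}$ with $\class{K} \vDash \varphi_{n} \rightarrow \psi_{\boldTheta_{n}^{1}} \vee \dots \vee \psi_{\boldTheta_{n}^{m}}$. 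Setting $\boldPsi'_{n} \assign \{ S_{\boldTheta_{n}^{1}}, \dots, S_{\boldTheta_{n}^{m}} \}$, each member is a finite subset of a member of $\boldPsi_{n}$, the family is finite, and distributing the existential over the finite disjunction gives $\class{K} \vDash \varphi_{n} \leftrightarrow \exists \tuple{z} \bigvee_{j} \bigwedge S_{\boldTheta_{n}^{j}}$, with soundness coming from the implications $\psi_{\boldTheta_{n}^{j}} \rightarrow \varphi_{n}$ and completeness from the step just finished. Doing this for every $n$ assembles the required $\boldPsi'$. (The mere existence of some finitary parametrized local EDCF is already ensured by Theorem~\ref{thm: finitary edcf}, but extracting one whose sets are \emph{subsets of the given} $\boldPsi$ seems to genuinely require the directed-ultraproduct argument above.)
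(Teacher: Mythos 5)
Your proof is correct, but it takes a genuinely different route from the paper's. The paper first invokes Theorem~\ref{thm: finitary edcf} to obtain \emph{some} finitary parametrized local EDCF $\boldPsi^{\mathrm{fin}}$ (this is where the Campercholi--Vaggione machinery enters), and then uses Fact~\ref{fact: edcf unique} in both directions to shuttle between $\boldPsi^{\mathrm{fin}}$ and the given $\boldPsi$: matching each of the finitely many $\boldTheta^{\mathrm{fin}}_{n} \in \boldPsi^{\mathrm{fin}}_{n}$ against some $\boldTheta''_{n} \in \boldPsi_{n}$ produces the finite subfamily, and matching back gives $\class{K} \vDash \bigwedge \boldTheta''_{n} \implies \exists \tuple{z} \bigwedge \boldTheta^{\mathrm{fin}}_{n}$, whose consequent is first-order because $\boldTheta^{\mathrm{fin}}_{n}$ is finite, so one application of first-order compactness shrinks $\boldTheta''_{n}$ to a finite subset. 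You bypass $\boldPsi^{\mathrm{fin}}$ and Fact~\ref{fact: edcf unique} entirely and work directly with the DPF formula $\varphi_{n}$: your directed-ultraproduct step plays the role of the paper's compactness step, with $\varphi_{n}$ in place of $\exists \tuple{z} \bigwedge \boldTheta^{\mathrm{fin}}_{n}$ as the first-order consequent, and your second compactness argument over the disjunction of the $\psi_{\boldTheta_{n}}$ replaces the paper's appeal to Fact~\ref{fact: edcf unique} against the finite family $\boldPsi^{\mathrm{fin}}_{n}$. What you gain is self-containedness: you need only the DPF (plus Czelakowski's passage from $1$-generated to $n$-generated filters) and closure of $\class{K}$ under ultraproducts, with no detour through Theorem~\ref{thm: campercholi vaggione}; what the paper gains is brevity, since that machinery is already in place. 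One small remark: the ``subtlety'' you flag is real, but since soundness gives $\class{K} \vDash \bigwedge \boldTheta_{n} \implies \varphi_{n}$ with $\varphi_{n}$ first-order and $\class{K}$ elementary, the existence of a sufficient finite subconjunction is literally an instance of first-order compactness; your explicit ultraproduct over the directed poset of finite subsets is a valid proof of that instance, just spelled out by hand.
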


\begin{proof}
  By the previous theorem, $\logic{L}$ has a finitary parametrized local EDCF for some $\boldPsi^{\mathrm{fin}}$. Fact~\ref{fact: edcf unique} tells us that for each of the finitely many finite sets $\boldTheta^{\mathrm{fin}}_{n} \in \boldPsi^{\mathrm{fin}}_{n}$ there is some $\boldTheta''_{n} \in \boldPsi_{n}$ such that
\begin{align*}
  \class{K} \vDash \exists \tuple{z} \bigwedge \boldTheta^{\mathrm{fin}}_{n}(x_{1}, \dots, x_{n}, y, \tuple{z}) \implies \exists \tuple{z} \bigwedge \boldTheta'_{n}(x_{1}, \dots, x_{n}, y, \tuple{z}).
\end{align*}
  These sets yield a finite subfamily $\boldPsi''_{n}$ of $\boldPsi_{n}$ such that $\logic{L}$ has the parametrized local EDCF($\boldPsi''$). Again by Fact~\ref{fact: edcf unique}, for each $\boldTheta''_{n} \in \boldPsi''_{n}$ there is some (finite) $\boldTheta^{\mathrm{fin}}_{n} \in \boldPsi^{\mathrm{fin}}_{n}$ such that
\begin{align*}
  \class{K} \vDash \exists \tuple{z} \bigwedge \boldTheta''_{n}(x_{1}, \dots, x_{n}, y, \tuple{z}) \implies \exists \tuple{z} \bigwedge \boldTheta^{\mathrm{fin}}_{n}(x_{1}, \dots, x_{n}, y, \tuple{z}),
\end{align*}
  or equivalently
\begin{align*}
  \class{K} \vDash \bigwedge \boldTheta''_{n}(x_{1}, \dots, x_{n}, y, \tuple{z}) \implies \exists \tuple{z} \bigwedge \boldTheta^{\mathrm{fin}}_{n}(x_{1}, \dots, x_{n}, y, \tuple{z}).
\end{align*}
  Since each $\boldTheta^{\mathrm{fin}}_{n} \in \boldPsi^{\mathrm{fin}}_{n}$ is finite, by the compactness of first-order logic there is some finite subset $\boldTheta'_{n} \subseteq \boldTheta''_{n}$ such that
\begin{align*}
  \class{K} \vDash \bigwedge \boldTheta'_{n}(x_{1}, \dots, x_{n}, y, \tuple{z}) \implies \exists \tuple{z} \bigwedge \boldTheta^{\mathrm{fin}}_{n}(x_{1}, \dots, x_{n}, y, \tuple{z}).
\end{align*}
  Consequently, $\alg{A} \vDash \boldTheta_{n}(a_{1}, \dots, a_{n}, b, \tuple{c})$ implies that $b \in \Fg^{\alg{A}}_{\logic{L}} (a_{1}, \dots, a_{n})$. Replacing each set $\boldTheta''_{n}$ in the finite family $\boldPsi''_{n}$ by the finite set $\boldTheta'_{n}$ therefore yields a family $\boldPsi'_{n}$ with the required properties.
\end{proof}

\section{The Leibniz hierarchy}
\label{sec: leibniz}

  Before discussing the relationship between the EDCF and the DDT, we shall first need to review certain important families of logics, which form part of the so-called Leibniz hierarchy of abstract algebraic logic. The reader familiar with the topic can skip this section without loss of continuity. The reader looking for more details can consult the textbook~\cite{font16} or the monograph~\cite{czelakowski01}.

  The Leibniz hierarchy is so called because its levels can be defined in terms of the behavior of the \emph{Leibniz operator} on $\logic{L}$-filters. This is the map
\begin{align*}
  \Leibniz^{\alg{A}}\colon \Fi_{\logic{L}} \alg{A} \to \Con_{\Alg \logic{L}} \alg{A}
\end{align*}
  which assigns to each $\logic{L}$-filter $F$ on an algebra $\alg{A}$ the largest congruence on $\alg{A}$ compatible with $F$. (Recall that compatibility means that $a \in F$ and $\pair{a}{b} \in \theta$ implies $b \in F$.) Such a congruence exists for each $F$ and the quotient algebra always lies in $\Alg \logic{L}$. This congruence is called the \emph{Leibniz congruence} of $F$. We shall use $\alg{A} / F$ and $a / F$ as abbreviations for $\alg{A} / \Leibniz^{\alg{A}} F$ and $a / \Leibniz^{\alg{A}} F$, respectively.

  A logic $\logic{L}$ is \emph{protoalgebraic} if the Leibniz operator is monotone on the $\logic{L}$-filters of each algebra $\alg{A}$ (without loss of generality in $\Alg \logic{L}$):
\begin{align*}
  F \subseteq G \text{ for } F, G \in \Fi_{\logic{L}} \alg{A} \implies \Leibniz^{\alg{A}} F \leq \Leibniz^{\alg{A}} G.
\end{align*}
  Equivalently, a logic is protoalgebraic if there is a set of formulas in two variables $\Delta(x, y)$, called a \emph{proto\-implication set}, such that
\begin{align*}
  & \emptyset \vdash_{\logic{L}} \Delta(x, x), & & x, \Delta(x, y) \vdash_{\logic{L}} y.
\end{align*}
  In particular, any logic with a binary implication connective $\rightarrow$ such that
\begin{align*}
  & \emptyset \vdash_{\logic{L}} x \rightarrow x, & & x, x \rightarrow y \vdash_{\logic{L}} y.
\end{align*}
  is protoalgebraic. The most common logics which have an implication connective generally validate this two rules. A third equivalent characterization is that $\logic{L}$ is proto\-algebraic if it has a \emph{set of congruence formulas with parameters}: a set of formulas $\Delta(x, y, \tuple{z})$ such that for each algebra $\alg{A}$ (without loss of generality in $\Alg \logic{L}$) and each $\logic{L}$-filter $F$ on $\alg{A}$
\begin{align*}
  \pair{a}{b} \in \Leibniz^{\alg{A}} F \iff \Delta(a, b, \tuple{c}) \subseteq F \text{ for each tuple $\tuple{c}$ of elements of $\alg{A}$}.
\end{align*}

   A logic $\logic{L}$ is \emph{(finitely) equivalential} if it has a (finite) set of congruence formulas $\Delta(x, y)$ which do not involve parameters, or equivalently a (finite) set of formulas $\Delta(x, y)$ which is a protoimplication set and moreover for each $n$-ary connective $f$ in the signature of $\logic{L}$
\begin{align*}
  \Delta(x_{1}, y_{1}), \dots, \Delta(x_{n}, y_{n}) \vdash_{\logic{L}} \Delta(f(x_{1}, \dots, x_{n}), f(y_{1}, \dots, y_{n})).
\end{align*}
  The local consequence relation of basic modal logic, denoted here by $\LocK$, is an example of (non-algebraizable) equivalential logic.

  A logic $\logic{L}$ is \emph{weakly algebraizable} if it is protoalgebraic and moreover the Leibniz operator is injective on the $\logic{L}$-filters of each algebra $\alg{A}$ (without loss of generality in $\Alg \logic{L}$). In that case, the Leibniz operator on $\logic{L}$-filters is in fact an isomorphism between $\Fi_{\logic{L}}$ and $\Con_{\Alg \logic{L}} \alg{A}$ for each $\alg{A}$.

  Finally, a logic $\logic{L}$ is \emph{algebraizable} if it is weakly algebraizable and equivalential. A non-trivially equivalent definition is that $\logic{L}$ is algebraizable if it is proto\-algebraic, the Leibniz operator on $\logic{L}$-filters is injective, and moreover it commutes with homomorphisms: for each homomorphism $h\colon \alg{A} \to \alg{B}$ (without loss of generality between algebras in $\Alg \logic{L}$) and each $\logic{L}$-filter $F$ on $\alg{B}$
\begin{align*}
  \Leibniz^{\alg{A}} h^{-1}[F] = h^{-1}[\Leibniz^{\alg{B}} F].
\end{align*}
  A third equivalent characterization of algebraizable logics is that they are weakly algebraizable logics $\logic{L}$ such that $\Alg \logic{L}$ is closed under sub\-algebras (this follows from~\cite[Theorem~6.73]{font16}).

  The following simple observations will be useful in the next section. Note that the product of a family of congruences $\theta_{i}$ of algebras $\alg{A}_{i}$ with $i \in I$ is defined as the congruence $\prod_{i \in I} \theta_{i}$ on $\alg{A} \assign \prod_{i \in I} \alg{A}_{i}$ such that $\pair{a}{b} \in \prod_{i \in I} \theta_{i}$ for $a, b \in \alg{A}$ if and only if $\pair{\pi_{i}(a)}{\pi_{i}(b)} \in \theta_{i}$ for all $i \in I$.

\begin{lemma} \label{lemma: leibniz of product}
  Let $\logic{L}$ be a protoalgebraic logic. Consider $\alg{A}_{i} \in \Alg \logic{L}$ and $F_{i} \in \Fi_{\logic{L}} \alg{A}_{i}$, with $\alg{A} \assign \prod_{i \in I} \alg{A}_{i}$ and $F \assign \prod_{i \in I} F_{i}$. Then $\Leibniz^{\alg{A}} F = \prod_{i \in I} \Leibniz^{\alg{A}_{i}} F_{i}$. That is, $a / F = b / F$ for $a, b \in \alg{A}$ if and only if $\pi_{i}(a) / F_{i} = \pi_{i}(b) / F_{i}$ for each $i \in I$.
\end{lemma}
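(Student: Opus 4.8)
The plan is to prove that the two congruences coincide as sets of pairs by exploiting the characterization of protoalgebraicity in terms of a set of congruence formulas with parameters. First I would record the two reductions that make the statement concrete. On the one hand, $F \assign \prod_{i \in I} F_{i}$ is an $\logic{L}$-filter on $\alg{A} \assign \prod_{i \in I} \alg{A}_{i}$, since the class of models of $\logic{L}$ is closed under products; so $\Leibniz^{\alg{A}} F$ makes sense. On the other hand, by the definition of the product congruence, $\pair{a}{b} \in \prod_{i \in I} \Leibniz^{\alg{A}_{i}} F_{i}$ holds precisely when $\pair{\pi_{i}(a)}{\pi_{i}(b)} \in \Leibniz^{\alg{A}_{i}} F_{i}$ for every $i \in I$. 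The goal thus becomes: for all $a, b \in \alg{A}$, $\pair{a}{b} \in \Leibniz^{\alg{A}} F$ if and only if $\pair{\pi_{i}(a)}{\pi_{i}(b)} \in \Leibniz^{\alg{A}_{i}} F_{i}$ for each $i \in I$.

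Since $\logic{L}$ is protoalgebraic, I would fix a set of congruence formulas with parameters $\Delta(x, y, \tuple{z})$, so that membership in the Leibniz congruence of any $\logic{L}$-filter is witnessed by $\Delta$. Applying this on $\alg{A}$ gives that $\pair{a}{b} \in \Leibniz^{\alg{A}} F$ if and only if $\Delta(a, b, \tuple{c}) \subseteq F$ for every parameter tuple $\tuple{c} \in \alg{A}$. Because each projection $\pi_{i} \colon \alg{A} \to \alg{A}_{i}$ is a homomorphism, it commutes with the term operations naming the formulas in $\Delta$, and because membership in $F = \prod_{i \in I} F_{i}$ is tested coordinatewise, this is equivalent to: for every $\tuple{c} \in \alg{A}$ and every $i \in I$, $\Delta(\pi_{i}(a), \pi_{i}(b), \pi_{i}(\tuple{c})) \subseteq F_{i}$. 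The target condition, applying $\Delta$ now on each factor $\alg{A}_{i}$, unfolds to: for every $i \in I$ and every tuple $\tuple{d} \in \alg{A}_{i}$, $\Delta(\pi_{i}(a), \pi_{i}(b), \tuple{d}) \subseteq F_{i}$.

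The crux is to match these two conditions, that is, to show that quantifying a single parameter tuple over the product $\alg{A}$ amounts to the same thing as quantifying independently over each factor $\alg{A}_{i}$. One direction is immediate: restricting a tuple $\tuple{c} \in \alg{A}$ to its $i$-th coordinate $\pi_{i}(\tuple{c})$ realizes a tuple of $\alg{A}_{i}$, so the product condition follows from the factorwise one. For the converse I would, given an index $j$ and an arbitrary tuple $\tuple{d} \in \alg{A}_{j}$, extend $\tuple{d}$ to a full tuple $\tuple{c} \in \alg{A}$ with $\pi_{j}(\tuple{c}) = \tuple{d}$, choosing the remaining coordinates arbitrarily (possible since the factors are non-empty), and then read off $\Delta(\pi_{j}(a), \pi_{j}(b), \tuple{d}) \subseteq F_{j}$ from the product condition instantiated at $\tuple{c}$. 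This independence-across-coordinates step is the only genuinely non-formal point, and the degenerate case $I = \emptyset$ (where $\alg{A}$ is trivial) is handled separately and vacuously.

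I would finally remark that protoalgebraicity is used only for one of the two inclusions. The containment $\prod_{i \in I} \Leibniz^{\alg{A}_{i}} F_{i} \subseteq \Leibniz^{\alg{A}} F$ holds for an arbitrary logic: if $a \in F$ and $\pair{a}{b} \in \prod_{i \in I} \Leibniz^{\alg{A}_{i}} F_{i}$, then $\pi_{i}(a) \in F_{i}$ and $\pair{\pi_{i}(a)}{\pi_{i}(b)} \in \Leibniz^{\alg{A}_{i}} F_{i}$ give $\pi_{i}(b) \in F_{i}$ for each $i$ by compatibility, so $b \in F$; hence $\prod_{i \in I} \Leibniz^{\alg{A}_{i}} F_{i}$ is a congruence compatible with $F$ and is therefore below the largest such congruence $\Leibniz^{\alg{A}} F$. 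It is the reverse inclusion $\Leibniz^{\alg{A}} F \subseteq \prod_{i \in I} \Leibniz^{\alg{A}_{i}} F_{i}$ that genuinely requires the congruence formulas, and hence protoalgebraicity.
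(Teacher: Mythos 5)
Your proof is correct and follows essentially the same route as the paper: both arguments fix a set of congruence formulas with parameters $\Delta(x,y,\tuple{z})$ and run the chain of equivalences reducing $\Delta(a,b,\tuple{c}) \subseteq F$ for all $\tuple{c} \in \alg{A}$ to the coordinatewise condition, with the key step being that quantifying the parameter tuple over the product is equivalent to quantifying independently over each factor (via projections one way and extension of a factor tuple to a product tuple the other way). Your extra observations --- that the inclusion $\prod_{i} \Leibniz^{\alg{A}_{i}} F_{i} \subseteq \Leibniz^{\alg{A}} F$ holds for any logic, and the vacuous case $I = \emptyset$ --- are sound but not needed beyond what the paper's one-display proof already establishes.
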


\begin{proof}
  We use the fact that $\logic{L}$ has a set of congruence formulas with parameters $\Delta(x, y, \tuple{z})$:
\begin{align*}
  \pair{a}{b} \in \Leibniz^{\alg{A}} F & \iff \Delta(a, b, \tuple{c}) \subseteq F \text{ for each tuple $\tuple{c}$ in $\alg{A}$} \\
  & \iff \Delta(\pi_{i}(a), \pi_{i}(b), \pi_{i}(\tuple{c})) \subseteq F_{i} \text { for each $i \in I$ and each $\tuple{c} \in \alg{A}$} \\
  & \iff \Delta(\pi_{i}(a), \pi_{i}(b), \tuple{d}) \subseteq F_{i} \text { for each $i \in I$ and each $\tuple{d} \in \alg{A}_{i}$} \\
  & \iff \pair{\pi_{i}(a)}{\pi_{i}(b)} \in \Leibniz^{\alg{A}_{i}} F_{i} \text{ for each $i \in I$} \\
  & \iff \pair{a}{b} \in \prod_{i \in I} \Leibniz^{\alg{A}_{i}} F_{i}. \qedhere
\end{align*}  
\end{proof}

\begin{lemma} \label{lemma: weakly algebraizable quotient}
  Let $\logic{L}$ be a weakly algebraizable logic. Consider an algebra $\alg{A} \in \Alg \logic{L}$, an $\logic{L}$-filter $F$ on $\alg{A}$, and $a_{1}, \dots, a_{n}, b \in \alg{A}$. Then $b \in \Fg^{\alg{A}}_{\logic{L}}(F, a_{1}, \dots, a_{n})$ if and only if $b / F \in \Fg^{\alg{A} / F}_{\logic{L}}(a_{1} / F, \dots, a_{n} / F)$.
\end{lemma}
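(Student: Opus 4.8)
The plan is to set $\theta \assign \Leibniz^{\alg{A}} F$ and to recognize the right-hand side of the asserted equivalence for what it is: by the characterization of relative filter generation recorded in Section~\ref{sec: ledcf and edcf}, the condition $b / F \in \Fg^{\alg{A} / F}_{\logic{L}}(a_{1} / F, \dots, a_{n} / F)$ is literally the statement that $b \in \Fg^{\alg{A}, \theta}_{\logic{L}}(a_{1}, \dots, a_{n})$. The lemma will therefore reduce to the equality of $\logic{L}$-filters
\[
  \Fg^{\alg{A}}_{\logic{L}}(F, a_{1}, \dots, a_{n}) = \Fg^{\alg{A}, \theta}_{\logic{L}}(a_{1}, \dots, a_{n}),
\]
i.e.\ to showing that the smallest $\logic{L}$-filter containing $F \cup \{ a_{1}, \dots, a_{n} \}$ coincides with the smallest $\logic{L}$-filter compatible with $\theta$ that contains $\{ a_{1}, \dots, a_{n} \}$.

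The key step I would isolate first is the description of which filters are compatible with $\theta$: an $\logic{L}$-filter $G$ on $\alg{A}$ is compatible with $\theta = \Leibniz^{\alg{A}} F$ \emph{if and only if} $F \subseteq G$. To see this I would note that $G$ is compatible with $\theta$ exactly when $\theta \leq \Leibniz^{\alg{A}} G$, since $\Leibniz^{\alg{A}} G$ is by definition the largest congruence compatible with $G$; this says $\Leibniz^{\alg{A}} F \leq \Leibniz^{\alg{A}} G$, which, because $\logic{L}$ is weakly algebraizable and hence $\Leibniz^{\alg{A}}$ is an order isomorphism of $\Fi_{\logic{L}} \alg{A}$ onto $\Con_{\Alg \logic{L}} \alg{A}$, is equivalent to $F \subseteq G$. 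I expect this equivalence to be the crux of the argument. Its forward direction ($F \subseteq G$ gives compatibility) needs only the monotonicity of the Leibniz operator, i.e.\ protoalgebraicity; the converse, which is what genuinely requires weak algebraizability, rests on the \emph{injectivity} of $\Leibniz^{\alg{A}}$ (equivalently, that its inverse is monotone), and this is exactly the hypothesis one cannot drop.

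Granting this description, the two inclusions fall out symmetrically. For $\subseteq$, the filter $\Fg^{\alg{A}, \theta}_{\logic{L}}(a_{1}, \dots, a_{n})$ is compatible with $\theta$, hence contains $F$, and it plainly contains $a_{1}, \dots, a_{n}$; as an $\logic{L}$-filter containing $F \cup \{ a_{1}, \dots, a_{n} \}$ it therefore includes $\Fg^{\alg{A}}_{\logic{L}}(F, a_{1}, \dots, a_{n})$. For $\supseteq$, the filter $\Fg^{\alg{A}}_{\logic{L}}(F, a_{1}, \dots, a_{n})$ contains $F$, hence is compatible with $\theta$, and contains $a_{1}, \dots, a_{n}$; as an $\logic{L}$-filter compatible with $\theta$ and containing $\{ a_{1}, \dots, a_{n} \}$ it therefore includes $\Fg^{\alg{A}, \theta}_{\logic{L}}(a_{1}, \dots, a_{n})$. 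The displayed equality, combined with the characterization of $\Fg^{\alg{A}, \theta}_{\logic{L}}$ from Section~\ref{sec: ledcf and edcf}, then delivers the stated equivalence.
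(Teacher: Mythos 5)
Your proof is correct and rests on exactly the same two facts as the paper's: monotonicity of the Leibniz operator (protoalgebraicity) shows that every $\logic{L}$-filter extending $F$ is compatible with $\Leibniz^{\alg{A}} F$, and order-reflection (the content of weak algebraizability, which the paper likewise invokes as ``$\Leibniz^{\alg{A}} F \leq \Leibniz^{\alg{A}} F'$ iff $F \subseteq F'$'') gives the converse. The only difference is organizational: you phrase everything upstairs via the relative generation operator $\Fg^{\alg{A},\theta}_{\logic{L}}$ with $\theta \assign \Leibniz^{\alg{A}} F$, whereas the paper works in the quotient $\alg{A}/F$ and observes that $\pi[F]$ is its smallest filter; the two formulations are interchangeable via the isomorphism $\Fi^{\theta}_{\logic{L}}\alg{A} \iso \Fi_{\logic{L}}\alg{A}/\theta$ recorded in Section~\ref{sec: ledcf and edcf}.
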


\begin{proof}
  If $G$ is an $\logic{L}$-filter on $\alg{A}$ which extends $F$, then by protoalgebraicity $\Leibniz^{\alg{A}} F \leq \Leibniz^{\alg{A}} G$, so $H \assign \set{a / F}{a \in G}$ is an $\logic{L}$-filter on $\alg{A} / F$ and $G = \pi^{-1}[H]$, where ${\pi\colon \alg{A} \to \alg{A} / F}$ is the quotient map. Consequently, $b \in \Fg^{\alg{A}} (F, a_{1}, \dots, a_{n})$ if and only if $b / F \in \Fg^{\alg{A} / F} (\pi[F], a_{1} / F, \dots, a_{n} / F)$. Because $\logic{L}$ is weakly algebraizable, $\Leibniz^{\alg{A}} F \leq \Leibniz^{\alg{A}} F'$ for $F' \in \Fi_{\logic{L}} \alg{A}$ if and only if $F \subseteq F'$. It follows that $\pi[F]$ is the smallest $\logic{L}$-filter on $\alg{A} / F$, and thus $\Fg^{\alg{A} / F} (\pi[F], a_{1} / F, \dots, a_{n} / F) = \Fg^{\alg{A} / F} (a_{1} / F, \dots, a_{n} / F)$.
\end{proof}

\section{The EDCF and the DDT}
\label{sec: edcf and ddt}

  As foreshadowed in the preceding sections, there is a close relationship between the different forms of the EDCF and the corresponding forms of the DDT. The goal of this section is to clarify this relationship.

  Firstly, we show that for algebraizable logics the two hierarchies coincide. Secondly, we show that in practice each form of the DDT often implies the corresponding form of the EDCF, in particular this happens under the modest assumption that $0$-generated filters are equationally definable. Finally, we show that in full generality the DDT does not imply the EDCF, and conversely we construct a protoalgebraic logic with the EDCF but without the DDT. This proves that although the semantic correlates of the corresponding forms of the DDT and the EDCF look very similar, none of them imply the other.

\begin{definition}
  A logic $\logic{L}$ has the \emph{parametrized local DDT} if there is a family $\ddtfamily$ of sets of formulas $\ddtset(x, y, \tuple{z})$, where $\tuple{z}$ is a possibly infinite tuple of variables and the variables $x, y, \tuple{z}$ are distinct, such that
\begin{align*}
  \Gamma, \varphi \vdash_{\logic{L}} \psi \iff \Gamma \vdash_{\logic{L}} \ddtset(\varphi, \psi, \tuple{\chi}) \text{ for some } \ddtset \in \ddtfamily \text{ and some tuple of formulas } \tuple{\chi}. 
\end{align*}
  If the variables $\tuple{z}$ do not occur in any of the sets $\ddtset$, we say that $\logic{L}$ has the \emph{local DDT}. If $\ddtfamily$ contains exactly one set of formuas $\ddtset$, we say that $\logic{L}$ has the \emph{parametrized DDT}. If both of these conditions hold, it has the \emph{(global) DDT}.
\end{definition}

  In the parametrized local case, there is nothing to discuss: we have seen that every logic has the parametrized local EDCF (Theorem~\ref{thm: pledcf}), while the parametrized local DDT is equivalent to protoalgebraicity.

  The semantic correlates of the parametrized DDT are the following.

\begin{definition}
  A logic $\logic{L}$ has \emph{factor determined compact filters} if for each family of algebras $\alg{A}_{i}$ with $i \in I$, each family of filters $F_{i} \in \Fi_{\logic{L}} \alg{A}_{i}$, and $\alg{A} \assign \prod_{i \in I} \alg{A}_{i}$ and $F \assign \prod_{i \in I} F_{i}$ we have
\begin{align*}
  \Fg^{\alg{A}}_{\logic{L}}(F, a_{1}, \dots, a_{n}) = \prod_{i \in I} \Fg^{\alg{A}_{i}}_{\logic{L}}(F_{i}, \pi_{i}(a_{1}), \dots, \pi_{i}(a_{n})).
\end{align*}
\end{definition}

  Observe that if a logic $\logic{L}$ has factor determined compact filters and moreover for each family of algebras $\alg{A}_{i}$ with $i \in I$
\begin{align*}
  \Fg^{\alg{A}}_{\logic{L}} \emptyset = \prod_{i \in I} \Fg^{\alg{A}_{i}}_{\logic{L}} \emptyset,
\end{align*}
  then (taking $F_{i} \assign \Fg^{\alg{A}_{i}}_{\logic{L}}$) it has absolute factor determined compact filters.

\begin{definition}
  An \emph{$n$-test matrix for compact filters} of $\logic{L}$ on a sub\-direct class $\class{K}$ is a matrix $\pair{\alg{A}_{n}}{F_{n}} \in \class{K}$ with elements $p_{1}, \dots, p_{n}, q \in \alg{A}_{n}$ (called \emph{test elements}) such that for each $\alg{A} \in \class{K}$
\begin{align*}
  b \in \Fg^{\alg{A}}_{\logic{L}}(a_{1}, \dots, a_{n}) & \iff \text{there is a homomorphism } h\colon \alg{A}_{n} \to \alg{A} \text{ with } \\
  & \phantom{\iff} ~ h(p_{1}) = a_{1}, \dots, h(p_{n}) = a_{n}, \text{ and } h(q) = b.
\end{align*}
  Equivalently, $q \in \Fg^{\alg{A}_{n}}_{\logic{L}}(p_{1}, \dots, p_{n})$ and moreover $b \in \Fg^{\alg{A}}_{\logic{L}}(a_{1}, \dots, a_{n})$ implies the existence of the above homomorphism $h$. A logic $\logic{L}$ \emph{has test matrices for compact filters} on $\class{K}$ if it has an $n$-test matrix on $\class{K}$ for all~$n \in \omega$.
\end{definition}

\begin{theorem}[{\cite[Thm.~2.4.1]{czelakowski01}}]
  The following are equivalent for each logic~$\logic{L}$:
\begin{enumerate}[(i)]
\item $\logic{L}$ has the parametrized DDT.
\item $\logic{L}$ is protoalgebraic and has factor determined compact filters.
\item $\logic{L}$ is protoalgebraic and has test matrices.
\end{enumerate}
\end{theorem}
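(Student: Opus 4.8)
The plan is to run the same three-step cycle (i)$\Rightarrow$(ii)$\Rightarrow$(iii)$\Rightarrow$(i) that drove the proof of the parametrized EDCF in Theorem~\ref{thm: pedcf}, now carrying along the background filter that encodes the side hypotheses~$\Gamma$ and tracking protoalgebraicity as an extra ingredient. The one genuinely new input is cheap: protoalgebraicity is built into conditions (ii) and (iii), and it falls out of (i) for free, since the parametrized DDT (the family $\ddtfamily = \{ \ddtset \}$ consisting of a single set) is a special case of the parametrized local DDT, which we have already observed is equivalent to protoalgebraicity.

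For (i)$\Rightarrow$(ii) I would first pass from the syntactic parametrized DDT to its algebraic form: by structurality the equivalence $\Gamma, \varphi \vdash_{\logic{L}} \psi \iff \Gamma \vdash_{\logic{L}} \ddtset(\varphi, \psi, \tuple{\chi})$ for some $\tuple{\chi}$ lifts to arbitrary algebras, giving for each $\alg{A}$, each $F \in \Fi_{\logic{L}} \alg{A}$, and each $a, b \in \alg{A}$ that $b \in \Fg^{\alg{A}}_{\logic{L}}(F, a)$ iff $\ddtset(a, b, \tuple{c}) \subseteq F$ for some $\tuple{c} \in \alg{A}$. Iterating over the $n$ generators produces an $n$-ary set $\ddtset_{n}$ with
\begin{align*}
  b \in \Fg^{\alg{A}}_{\logic{L}}(F, a_{1}, \dots, a_{n}) \iff \ddtset_{n}(a_{1}, \dots, a_{n}, b, \tuple{c}) \subseteq F \text{ for some } \tuple{c} \in \alg{A}.
\end{align*}
In a product $\alg{A} = \prod_{i} \alg{A}_{i}$ with $F = \prod_{i} F_{i}$ the membership condition $\ddtset_{n}(\dots, \tuple{c}) \subseteq F$ holds exactly when each component condition $\ddtset_{n}(\dots, \pi_{i}(\tuple{c})) \subseteq F_{i}$ holds, and gluing component witnesses $\tuple{c}_{i}$ into a single $\tuple{c}$, precisely as in the proof of (ii)$\Rightarrow$(iii) of Theorem~\ref{thm: pedcf}, yields factor determined compact filters.

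For (ii)$\Rightarrow$(iii) I would reuse the representative-product construction from (iii)$\Rightarrow$(iv) of Theorem~\ref{thm: pedcf}: fix $n$, let $\kappa$ bound $\aleph_{0}$ plus the number of parameters, collect up to isomorphism a set $S$ of all $\kappa$-generated models $\pair{\alg{A}}{F}$ carrying test data $a_{1}, \dots, a_{n}, b$ with $b \in \Fg^{\alg{A}}_{\logic{L}}(a_{1}, \dots, a_{n})$, and let $\pair{\alg{A}_{n}}{F_{n}}$ be the product of these matrices with $p_{1}, \dots, p_{n}, q$ defined componentwise. Factor determination then forces $q \in \Fg^{\alg{A}_{n}}_{\logic{L}}(p_{1}, \dots, p_{n})$, while the reduction of any witness to a $\kappa$-generated submodel supplies the required homomorphism. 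For (iii)$\Rightarrow$(i), protoalgebraicity provides a parametrized local DDT given by some family $\ddtfamily$, and the test matrices let me pare it down to a single set: since $q \in \Fg^{\alg{A}_{n}}_{\logic{L}}(p_{1}, \dots, p_{n})$ there are some $\ddtset \in \ddtfamily$ and parameters $\tuple{r}$ realizing $\ddtset(p_{1}, \dots, p_{n}, q, \tuple{r})$, and transporting these along the homomorphisms guaranteed by the test-matrix property shows that this single $\ddtset$ witnesses the parametrized DDT uniformly, exactly as in (iv)$\Rightarrow$(i) of Theorem~\ref{thm: pedcf}.

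The step I expect to be the main obstacle is the correct handling of the background filter, i.e.\ the side hypotheses~$\Gamma$. In factor determined compact filters it appears as an arbitrary, possibly non-compact, product filter $\prod_{i} F_{i}$, whereas in the test-matrix characterization the hypotheses are absorbed among the generators; reconciling these two formats, and in particular checking that passing to a product and then to a $\kappa$-generated submodel disturbs neither the generated filter nor the smallest filter, is where real care is needed. This is exactly the point at which protoalgebraicity does the work that was unnecessary in the parameter-free Theorem~\ref{thm: pedcf}: it underwrites the filter--congruence correspondence and, for instance via Lemma~\ref{lemma: leibniz of product}, recovers the Leibniz congruence of a product filter factorwise. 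For a non-protoalgebraic logic this factorwise control fails, which is precisely why protoalgebraicity must be imposed in (ii) and (iii) rather than, as with the EDCF, emerging as a triviality.
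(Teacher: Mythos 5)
The paper does not actually prove this theorem: it is imported verbatim from Czelakowski with the citation \cite[Thm.~2.4.1]{czelakowski01}, so there is no in-paper proof to compare against. Your reconstruction follows what is essentially the standard argument, and it is the same architecture that the paper itself borrows for Theorem~\ref{thm: pedcf} (the authors say explicitly that their (iii)$\Rightarrow$(iv) is ``essentially identical'' to Czelakowski's (ii)$\Rightarrow$(viii)), so the cycle (i)$\Rightarrow$(ii)$\Rightarrow$(iii)$\Rightarrow$(i) with the background filter carried along is the right plan and the individual steps go through.

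Two points deserve flagging. First, the step you dismiss with ``by structurality the equivalence lifts to arbitrary algebras'' is the one place where real work hides: structurality alone gives you nothing about filters on an arbitrary algebra $\alg{A}$. What you need is the transfer principle for finitary protoalgebraic logics --- realize a (countably generated, by finitarity) submatrix witnessing $b \in \Fg^{\alg{A}}_{\logic{L}}(F,a)$ as a surjective image of $\pair{\Fm}{T}$ for a suitable theory $T$, apply the syntactic parametrized DDT there, and push the witnessing parameters forward. This is exactly the content of the correspondence/transfer machinery in \cite{czelakowski01}, and protoalgebraicity (not structurality) is what makes it work; as written, your (i)$\Rightarrow$(ii) assumes the filter-level form of the DDT rather than deriving it. Second, your closing diagnosis of where protoalgebraicity earns its keep is slightly off target: Lemma~\ref{lemma: leibniz of product} and the factorwise computation of Leibniz congruences belong to the proof of Theorem~\ref{thm: pddt = pedcf}, not to this theorem. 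Here protoalgebraicity is needed for the more prosaic reasons that (a) it licenses the theory-to-filter transfer just mentioned, (b) in (iii)$\Rightarrow$(i) it supplies the parametrized local family $\ddtfamily$ that the test matrix then pares down to a single $\ddtset$, and (c) it must be imposed as a hypothesis in (ii) and (iii) because, unlike the EDCF setting of Theorem~\ref{thm: pledcf}, the base level of the DDT hierarchy is not inhabited by all logics.
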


\begin{theorem} \label{thm: pddt = pedcf}
  A weakly algebraizable logic has factor determined compact filters if and only if it has absolute factor determined compact filters. It thus has the parametrized DDT if and only if it has the parametrized EDCF.
\end{theorem}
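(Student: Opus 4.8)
The plan is to establish the biconditional between factor determined compact filters and absolute factor determined compact filters (the latter on $\Alg \logic{L}$), and then read off the equivalence of the parametrized DDT and the parametrized EDCF by combining this with Czelakowski's theorem quoted above and Theorem~\ref{thm: baby pedcf}. A weakly algebraizable logic is in particular protoalgebraic, so Lemmas~\ref{lemma: leibniz of product} and~\ref{lemma: weakly algebraizable quotient} are available; moreover the Leibniz operator is an order isomorphism $\Fi_{\logic{L}} \alg{A} \iso \Con_{\Alg \logic{L}} \alg{A}$ on every algebra, so both lemmas apply to arbitrary algebras (their proofs use only protoalgebraicity and this order isomorphism). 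The device driving both directions is the passage from relative to absolute filter generation on a Leibniz quotient: by Lemma~\ref{lemma: weakly algebraizable quotient}, $b \in \Fg^{\alg{A}}_{\logic{L}}(F, a_{1}, \dots, a_{n})$ if and only if $b / F \in \Fg^{\alg{A} / F}_{\logic{L}}(a_{1} / F, \dots, a_{n} / F)$, where $\alg{A} / F$ is the quotient by $\Leibniz^{\alg{A}} F$.

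For the implication from absolute to factor determined compact filters, I would take algebras $\alg{A}_{i}$ with filters $F_{i} \in \Fi_{\logic{L}} \alg{A}_{i}$, set $\alg{A} \assign \prod_{i} \alg{A}_{i}$ and $F \assign \prod_{i} F_{i}$, and compute $\alg{A} / F$. By Lemma~\ref{lemma: leibniz of product} we have $\Leibniz^{\alg{A}} F = \prod_{i} \Leibniz^{\alg{A}_{i}} F_{i}$, hence $\alg{A} / F \iso \prod_{i} (\alg{A}_{i} / F_{i})$, with $b / F$ corresponding to the tuple $(\pi_{i}(b) / F_{i})_{i}$ and similarly for the $a_{j}$. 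Crucially, each factor $\alg{A}_{i} / F_{i}$ lies in $\Alg \logic{L}$, being a quotient by a Leibniz congruence, so the product is a product of algebras in $\Alg \logic{L}$ and the hypothesis of absolute factor determined compact filters applies to it. Chaining the quotient equivalence, this isomorphism, the absolute factor determination on $\prod_{i} (\alg{A}_{i} / F_{i})$, and the quotient equivalence applied inside each factor $\alg{A}_{i} / F_{i}$ (where Lemma~\ref{lemma: weakly algebraizable quotient} applies verbatim), I obtain that $b \in \Fg^{\alg{A}}_{\logic{L}}(F, a_{1}, \dots, a_{n})$ if and only if $\pi_{i}(b) \in \Fg^{\alg{A}_{i}}_{\logic{L}}(F_{i}, \pi_{i}(a_{1}), \dots, \pi_{i}(a_{n}))$ for every $i$, which is exactly factor determination.

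For the converse I would invoke the observation preceding the definition of factor determined compact filters: factor determination together with the identity $\Fg^{\alg{A}}_{\logic{L}} \emptyset = \prod_{i} \Fg^{\alg{A}_{i}}_{\logic{L}} \emptyset$ yields absolute factor determination, by taking each $F_{i}$ to be the smallest filter $\Fg^{\alg{A}_{i}}_{\logic{L}} \emptyset$. So it suffices to prove that identity for $\alg{A}_{i} \in \Alg \logic{L}$. The smallest filter on an algebra in $\Alg \logic{L}$ has Leibniz congruence $\Delta_{\alg{A}_{i}}$, so by Lemma~\ref{lemma: leibniz of product} the product $\prod_{i} \Fg^{\alg{A}_{i}}_{\logic{L}} \emptyset$ has Leibniz congruence $\prod_{i} \Delta_{\alg{A}_{i}} = \Delta_{\alg{A}}$. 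Since $\Fg^{\alg{A}}_{\logic{L}} \emptyset$ is contained in this filter, monotonicity of the Leibniz operator forces its Leibniz congruence to lie below $\Delta_{\alg{A}}$, hence to equal $\Delta_{\alg{A}}$; injectivity of the Leibniz operator then identifies the two filters.

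Finally, the stated consequence follows formally: weak algebraizability gives protoalgebraicity, so Czelakowski's theorem reduces the parametrized DDT to factor determined compact filters, while Theorem~\ref{thm: baby pedcf} reduces the parametrized EDCF to absolute factor determined compact filters, and the first part identifies these two properties. The main obstacle is the reduction of the second paragraph: one must carefully verify that the Leibniz quotient of a product is isomorphic to the product of the factor quotients with the generators tracked coordinatewise, so that the absolute hypothesis can legitimately be applied to $\prod_{i} (\alg{A}_{i} / F_{i})$ and then transported back through the quotient equivalence in each coordinate.
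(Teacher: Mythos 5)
Your proposal is correct and follows essentially the same route as the paper: both directions reduce to the identity $\Fg^{\alg{A}}_{\logic{L}} \emptyset = \prod_{i} \Fg^{\alg{A}_{i}}_{\logic{L}} \emptyset$ computed via Leibniz congruences and injectivity, and to the isomorphism $\alg{A}/F \iso \prod_{i}(\alg{A}_{i}/F_{i})$ obtained from Lemma~\ref{lemma: leibniz of product} combined with Lemma~\ref{lemma: weakly algebraizable quotient}. Your explicit remark that the two lemmas extend to arbitrary algebras, and that each $\alg{A}_{i}/F_{i}$ lands in $\Alg\logic{L}$ so the absolute hypothesis applies to the product of quotients, is a welcome clarification of a point the paper passes over.
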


\begin{proof}
  Let $\logic{L}$ be a weakly algebraizable logic. Suppose first that $\logic{L}$ has factor determined filters. To show that $\logic{L}$ has absolute factor determined filters, it suffices to show that
\begin{align*}
  \Fg^{\alg{A}}_{\logic{L}} \emptyset = \prod_{i \in I} \Fg^{\alg{A}_{i}}_{\logic{L}} \emptyset.
\end{align*}
  for each family of algebras $\alg{A}_{i} \in \Alg \logic{L}$ with $i \in I$. By weak algebraizability it suffices to show that the Leibniz congruences of the two filters coincide. But
\begin{align*}
  \Leibniz^{\alg{A}} (\Fg^{\alg{A}}_{\logic{L}} \emptyset) = \Delta_{\alg{A}} = \prod_{i \in I} \Delta_{\alg{A}_{i}} = \prod_{i \in I} \Leibniz^{\alg{A}_{i}} (\Fg^{\alg{A}_{i}}_{\logic{L}} \emptyset) = \Leibniz^{\alg{A}} \left( \prod_{i \in I} \Fg^{\alg{A}_{i}}_{\logic{L}} \emptyset \right),
\end{align*}
  where we use Lemma~\ref{lemma: leibniz of product} and the fact that $\Leibniz^{\alg{B}} (\Fg^{\alg{B}}_{\logic{L}} \emptyset) = \Delta_{\alg{B}}$ for each $\alg{B} \in \Alg \logic{L}$ by protoalgebraicity.

  Conversely, suppose that $\logic{L}$ has absolute factor determined filters. Consider a family of algebras $\alg{A}_{i} \in \Alg \logic{L}$ with $i \in I$ and a family of $\logic{L}$-filters $F_{i}$ on~$\alg{A}_{i}$, and take $\alg{A} \assign \prod_{i \in I} \alg{A}_{i}$ and $F \assign \prod_{i \in I} F_{i}$. For $a_{1},\dots,a_{n},b\in \alg{A}$ we need to show that
\begin{align*}
  \pi_{i}(b)\in \Fg^{\alg{A}_{i}}_{\logic{L}} (F_{i}, \pi_{i}(a_{1}), \dots, \pi_{i}(a_{n})) \text{ for each }i\in I\end{align*}
 if and only if 
 \begin{align*}
  b\in\Fg^{\alg{A}}_{\logic{L}} (F, a_{1}, \dots, a_{n}).
\end{align*}
   Let $\alg{B} \assign \prod_{i \in I} \alg{A}_{i} / F_{i}$. By Lemma~\ref{lemma: leibniz of product} the map $a / F \mapsto (\pi_{i}(a) / F_{i})_{i \in I}$ is a well-defined embedding $\alg{A} / F \to \alg{B}$. Since each $(\pi_{i}(a)/F_{i})_{i\in I} \in \alg{B}$ is the image of $(\pi_{i}(a))_{i \in I} / F$, this map is also surjective, thus it is an isomorphism.
   
   The above isomorphism, together with Lemma~\ref{lemma: weakly algebraizable quotient}, reduces our task to showing that
\begin{align*}
  \pi_{i}(b) / F_{i} \in \Fg^{\alg{A}_{i} / F_{i}} (\pi_{i}(a_{1}) / F_{i}, \dots, \pi_{i}(a_{n}) / F_{i}) \text{ for each }i \in I
\end{align*}
  if and only if
\begin{align*}
  (\pi_{i}(b)/F_{i})_{i\in I} \in \Fg^{\alg{B}}_{\logic{L}} ((\pi_{i}(a_{1}) / F_{i})_{i \in I}, \dots, (\pi_{i}(a_{n}) / F_{i})_{i \in I}).
\end{align*}
 But this equivalence holds because $\logic{L}$ has absolute factor determined filters.
\end{proof}

  The semantic correlate of the local DDT is the following.

\begin{definition}
  A logic $\logic{L}$ has the \emph{filter extension property (FEP)} if for each submatrix $\pair{\alg{A}}{F}$ of a model $\pair{\alg{B}}{G}$ of $\logic{L}$ each $\logic{L}$-filter $F' \supseteq F$ on $\alg{A}$ is the restriction to $\alg{A}$ of some $\logic{L}$-filter $G \subseteq G$ on $\alg{B}$.
\end{definition}

  In case $\Alg \logic{L}$ is a prevariety, observe that if $\logic{L}$ has the FEP and moreover
\begin{align*}
  \Fg^{\alg{A}}_{\logic{L}} \emptyset = \alg{A} \cap \Fg^{\alg{B}}_{\logic{L}} \emptyset \text{ for each } \alg{A} \leq \alg{B} \in \Alg \logic{L},
\end{align*}
  then $\logic{L}$ has the absolute FEP (take $F = \Fg^{\alg{A}}_{\logic{L}} \emptyset$ and $G \assign \Fg^{\alg{B}}_{\logic{L}} \emptyset$ in the FEP).

\begin{theorem}[{\cite[Thm.~2.3.5]{czelakowski01}}] \label{thm: local ddt}
  A logic has the local DDT if and only if it is proto\-algebraic and has the FEP.
\end{theorem}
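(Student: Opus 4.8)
The plan is to prove the two implications separately, taking the witnessing family for the local DDT to be the collection of all \emph{detachment sets}, $\ddtfamily := \set{\ddtset(x,y)}{x, \ddtset(x,y) \vdash_{\logic{L}} y}$, where $\ddtset(\varphi,\psi)$ denotes the result of substituting $x \mapsto \varphi$, $y \mapsto \psi$. First I would treat the easier direction, that the local DDT (with some witnessing family $\ddtfamily$) gives protoalgebraicity and the FEP. For protoalgebraicity, reflexivity yields $x \vdash_{\logic{L}} x$, so by the local DDT $\emptyset \vdash_{\logic{L}} \ddtset(x,x)$ for at least one $\ddtset \in \ddtfamily$; collecting these, the set $\Delta(x,y) := \bigcup \set{\ddtset(x,y)}{\ddtset \in \ddtfamily, \ \vdash_{\logic{L}} \ddtset(x,x)}$ is a protoimplication set, since $\vdash_{\logic{L}} \Delta(x,x)$ holds by construction and applying the right-to-left direction of the DDT to $\ddtset(x,y) \vdash_{\logic{L}} \ddtset(x,y)$ gives $x, \ddtset(x,y) \vdash_{\logic{L}} y$, hence $x, \Delta(x,y) \vdash_{\logic{L}} y$. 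For the FEP the key auxiliary step is a transfer lemma: on every algebra $\alg{B}$, filter generation obeys the local DDT, i.e.\ $b \in \Fg^{\alg{B}}_{\logic{L}}(X, a)$ iff $\ddtset(a,b) \subseteq \Fg^{\alg{B}}_{\logic{L}}(X)$ for some $\ddtset \in \ddtfamily$. The right-to-left inclusion is immediate from $x, \ddtset(x,y) \vdash_{\logic{L}} y$, and the left-to-right inclusion is obtained by internalizing a generation derivation of $b$ from $X \cup \{a\}$ into a formal derivation on fresh variables (as in the proof of Theorem~\ref{thm: pledcf}) and applying the syntactic local DDT. Granting this, the FEP follows: for a submatrix $\pair{\alg{A}}{F}$ of a model $\pair{\alg{B}}{G}$ and a filter $F' \supseteq F$ on $\alg{A}$, one sets $G' := \Fg^{\alg{B}}_{\logic{L}}(G \cup F')$ and proves $G' \cap \alg{A} = F'$ by discharging the finitely many generators from $F'$ one at a time via the transfer lemma, the base case landing in $G \cap \alg{A} = F \subseteq F'$.

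For the converse, that protoalgebraicity together with the FEP yields the local DDT with respect to the family $\ddtfamily$ above, the right-to-left direction is automatic by structurality, so the content lies entirely in the left-to-right direction: given $\Gamma, \varphi \vdash_{\logic{L}} \psi$, I must produce a \emph{parameter-free} $\ddtset \in \ddtfamily$ with $\Gamma \vdash_{\logic{L}} \ddtset(\varphi, \psi)$. I would use protoalgebraicity in the form (recalled in Section~\ref{sec: leibniz}) that it is equivalent to the parametrized local DDT: there is a family of sets $\ddtset'(x,y,\tuple{z})$ \emph{with} parameters witnessing a parametrized local DDT, and this transfers to filter generation on arbitrary algebras just as above, the bookkeeping now being absorbed into the parameters. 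The role of the FEP is to confine these parameters to the subalgebra generated by $\varphi$ and $\psi$.

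Concretely, let $\alg{A}$ be the subalgebra of $\Fm$ generated by $\{\varphi, \psi\}$, put $G := \Cn_{\logic{L}}(\Gamma)$ and $F := \alg{A} \cap G$, so that $\pair{\alg{A}}{F}$ is a submatrix of the model $\pair{\Fm}{G}$. Applying the FEP to $F' := \Fg^{\alg{A}}_{\logic{L}}(F, \varphi)$ gives $\alg{A} \cap \Cn_{\logic{L}}(\Gamma, \varphi) = \Fg^{\alg{A}}_{\logic{L}}(F, \varphi)$, and since $\psi \in \Cn_{\logic{L}}(\Gamma, \varphi) \cap \alg{A}$ we obtain $\psi \in \Fg^{\alg{A}}_{\logic{L}}(F, \varphi)$. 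The filter form of the parametrized local DDT on $\alg{A}$ then yields $\ddtset'(\varphi, \psi, \tuple{c}) \subseteq F \subseteq \Cn_{\logic{L}}(\Gamma)$ for some parameters $\tuple{c} \in \alg{A}$. Because $\tuple{c}$ lies in $\alg{A}$, we may write $\tuple{c} = \tuple{\chi}(\varphi, \psi)$ for formulas $\tuple{\chi}(x,y)$, so that $\ddtset(x,y) := \ddtset'(x, y, \tuple{\chi}(x,y))$ is a parameter-free detachment set with $\Gamma \vdash_{\logic{L}} \ddtset(\varphi, \psi)$, as required.

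The main obstacle is precisely this parameter elimination. The naive candidate $\ddtset = \set{\delta(x,y)}{\delta(\varphi,\psi) \in \Cn_{\logic{L}}(\Gamma)}$ fails, because pulling the detachment $\psi \in \Fg^{\alg{A}}_{\logic{L}}(F, \varphi)$ back along the (possibly non-injective) substitution $x \mapsto \varphi$, $y \mapsto \psi$ only places $y$ in a filter generated \emph{relative to} the kernel of that substitution, not in the ordinary two-variable generated filter. Routing the argument through the parametrized local DDT is what saves it: the parameters $\tuple{c} \in \alg{A}$ encode exactly the identities among subterms of $\varphi$ and $\psi$ that the kernel would otherwise wipe out, while the FEP is the tool that guarantees these parameters can be located inside $\alg{A}$ rather than the ambient formula algebra. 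I expect verifying the filter-level transfer of the (parametrized) local DDT and the compatibility of the two FEP applications to be the most delicate bookkeeping.
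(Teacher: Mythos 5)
The paper does not actually prove this statement: it is imported with a citation to Czelakowski's Theorem~2.3.5, so there is no in-paper argument to compare against. Judged on its own, your proposal follows the standard architecture of that proof, and most of it is sound. Deriving protoalgebraicity from the local DDT via the detachment sets is correct, and the converse direction is the right idea executed correctly in outline: the filter form of the parametrized local DDT applied on the subalgebra of $\Fm$ generated by $\{\varphi,\psi\}$, with the FEP identifying $\Fg^{\alg{A}}_{\logic{L}}(F,\varphi)$ with $\alg{A}\cap\Cn_{\logic{L}}(\Gamma,\varphi)$, is exactly what lets you rewrite the parameters $\tuple{c}$ as $\tuple{\chi}(\varphi,\psi)$ and land back in $\ddtfamily$; your diagnosis of why the naive parameter-free candidate fails is also on point.

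The one genuine weak spot is the justification of the two transfer lemmas (the filter form of the local DDT in the first direction, and of the parametrized local DDT in the second). You propose to prove them by internalizing a filter-generation derivation into a syntactic derivation on fresh variables ``as in the proof of Theorem~\ref{thm: pledcf}.'' That method does not yield a syntactic derivation to which the syntactic DDT can be applied: a premise of a rule instance merely \emph{evaluates} to a previously derived element without being syntactically identical to the variable naming it, which is precisely why the proof of Theorem~\ref{thm: pledcf} must record these identifications as equations $\gamma_{j} \equals x_{f(j)}$ --- the same kernel obstruction you correctly identify two paragraphs later for the naive candidate. The standard repair goes through protoalgebraicity, which you have already secured at that point: take a surjective homomorphism $h$ from a suitable formula algebra onto the algebra in question, note that $\Ker h$ lies below the Leibniz congruence of $h^{-1}[F]$ and hence, by monotonicity of the Leibniz operator, is compatible with every theory above $h^{-1}[F]$, so that such theories correspond exactly to filters above $F$; the syntactic (parametrized) local DDT then transfers along $h$. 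With the transfer lemmas established this way, the rest of your argument --- discharging the generators one at a time to get the FEP, and the parameter-elimination step in the converse --- goes through.
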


\begin{theorem} \label{thm: lddt = ledcf}
  An algebraizable logic has the FEP if and only if it has the absolute FEP. It thus has the local DDT if and only if it has the local EDCF.
\end{theorem}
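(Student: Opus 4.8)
The plan is to establish everything by reducing to the first claim, that the FEP and the absolute FEP coincide for an algebraizable logic $\logic{L}$. The second sentence is then immediate: an algebraizable logic is protoalgebraic with $\Alg \logic{L}$ closed under subalgebras, so Theorem~\ref{thm: local ddt} identifies the local DDT with (protoalgebraicity together with) the FEP, while Theorem~\ref{thm: baby ledcf} identifies the local EDCF on $\Alg \logic{L}$ with the absolute FEP on $\Alg \logic{L}$; the equivalence of the FEP and the absolute FEP then glues these two statements together. So I would dispose of the ``thus'' in one line and spend the rest of the proof on the FEP versus the absolute FEP.

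For the implication from the FEP to the absolute FEP I would appeal to the observation stated just before Theorem~\ref{thm: local ddt}: it reduces the task to proving that $\Fg^{\alg{A}}_{\logic{L}} \emptyset = \alg{A} \cap \Fg^{\alg{B}}_{\logic{L}} \emptyset$ for all $\alg{A} \leq \alg{B} \in \Alg \logic{L}$. The left-to-right inclusion is automatic. For the converse I would compute Leibniz congruences, writing $\iota \colon \alg{A} \hookrightarrow \alg{B}$ for the inclusion: since the Leibniz operator of an algebraizable logic commutes with homomorphisms, $\Leibniz^{\alg{A}}(\alg{A} \cap \Fg^{\alg{B}}_{\logic{L}} \emptyset) = \iota^{-1}[\Leibniz^{\alg{B}} \Fg^{\alg{B}}_{\logic{L}} \emptyset]$. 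For a protoalgebraic logic the least filter on an algebra in $\Alg \logic{L}$ has the identity as its Leibniz congruence, so both this congruence and $\Leibniz^{\alg{A}} \Fg^{\alg{A}}_{\logic{L}} \emptyset$ equal $\Delta_{\alg{A}}$; the filters $\alg{A} \cap \Fg^{\alg{B}}_{\logic{L}} \emptyset$ and $\Fg^{\alg{A}}_{\logic{L}} \emptyset$ thus share the same Leibniz congruence, and injectivity of the Leibniz operator forces them to coincide.

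The converse implication, from the absolute FEP to the FEP, carries the real weight, because the FEP quantifies over arbitrary matrices while the absolute FEP only concerns algebras in $\Alg \logic{L}$; I would close this gap by a Leibniz reduction. Given a submatrix $\pair{\alg{A}}{F}$ of a model $\pair{\alg{B}}{G}$ and an $\logic{L}$-filter $F' \supseteq F$ on $\alg{A}$, put $\theta \assign \Leibniz^{\alg{B}} G$ with quotient map $\pi \colon \alg{B} \to \alg{B}/G$, so that $\alg{B}/G \in \Alg \logic{L}$. Since $F = \alg{A} \cap G = \iota^{-1}[G]$, commutation with the inclusion gives $\Leibniz^{\alg{A}} F = \alg{A}^{2} \cap \theta$, so $\alg{A}/F$ embeds into $\alg{B}/G$ as a subalgebra lying in $\Alg \logic{L}$; protoalgebraicity makes $F'$ compatible with $\Leibniz^{\alg{A}} F$, so $F'$ descends to an $\logic{L}$-filter on $\alg{A}/F$. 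The absolute FEP inside $\Alg \logic{L}$ extends this to an $\logic{L}$-filter $H$ on $\alg{B}/G$, and I would set $G' \assign \pi^{-1}[H]$. To finish I would check the two required properties of $G'$: that $G' \supseteq G$, which holds because $\pi[G]$ is the least $\logic{L}$-filter on $\alg{B}/G$ (weak algebraizability, as in the proof of Lemma~\ref{lemma: weakly algebraizable quotient}), whence $H \supseteq \pi[G]$ and $G' = \pi^{-1}[H] \supseteq \pi^{-1}[\pi[G]] = G$; and that $G' \cap \alg{A} = F'$, which follows by pulling the defining equation for $H$ on the subalgebra $\alg{A}/F$ back through $\pi$ and invoking the compatibility of $F'$ with $\Leibniz^{\alg{A}} F$.

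The main obstacle is precisely this reduction: since $\alg{A}$ and $\alg{B}$ need not lie in $\Alg \logic{L}$, one has to pass to the reduced quotient and then verify that the pulled-back filter $G'$ simultaneously contains $G$ and restricts to exactly $F'$ on $\alg{A}$. Both verifications rely on the two hallmark features of algebraizability used above — that the Leibniz operator commutes with homomorphisms, which pins down $\Leibniz^{\alg{A}} F = \alg{A}^{2} \cap \theta$, and that $\pi[G]$ is the least $\logic{L}$-filter on the reduced algebra $\alg{B}/G$ — so the crux is to deploy these at exactly the right moments rather than to perform any lengthy computation.
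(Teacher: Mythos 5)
Your proposal is correct and follows essentially the same route as the paper's proof: one direction reduces to showing $\Fg^{\alg{A}}_{\logic{L}} \emptyset = \alg{A} \cap \Fg^{\alg{B}}_{\logic{L}} \emptyset$ and concludes by injectivity of the Leibniz operator, while the other passes to the quotients $\alg{A}/F \leq \alg{B}/G$, applies the absolute FEP there, and pulls the resulting filter back along $\pi$. The only cosmetic difference is that you invoke commutation of the Leibniz operator with (inclusion) homomorphisms where the paper computes directly with a parameter-free set of congruence formulas; these are interchangeable characterizations of algebraizability.
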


\begin{proof}
  Let $\logic{L}$ be an algebraizable logic. Suppose first that $\logic{L}$ has the FEP. To prove that it has the absolute FEP, it suffices to show that given algebras $\alg{A} \leq \alg{B} \in \Alg \logic{L}$ we have $\Fg^{\alg{A}}_{\logic{L}} \emptyset = \alg{A} \cap \Fg^{\alg{B}}_{\logic{L}} \emptyset$. To this end, it suffices to show that $\Leibniz^{\alg{A}} \Fg^{\alg{A}}_{\logic{L}} \emptyset = \Leibniz^{\alg{A}} (\alg{A} \cap \Fg^{\alg{B}}_{\logic{L}} \emptyset)$. Observe that $\Leibniz^{\alg{A}} \Fg^{\alg{A}}_{\logic{L}} \emptyset = \Delta_{\alg{A}}$ and $\Leibniz^{\alg{B}} \Fg^{\alg{B}}_{\logic{L}} \emptyset = \Delta_{\alg{B}}$, since the Leibniz operator is an isomorphism between $\logic{L}$-filters and $(\Alg \logic{L})$-congruences. Since $\logic{L}$ has a set of congruence formulas without parameters $\Delta(x, y)$, for each $a, b \in \alg{A}$ we have
\begin{align*}
  \pair{a}{b} \in \Leibniz^{\alg{A}} (\alg{A} \cap \Fg^{\alg{B}}_{\logic{L}} \emptyset) & \iff \Delta(a, b) \subseteq \alg{A} \cap \Fg^{\alg{B}}_{\logic{L}} \emptyset \\
  & \iff \Delta(a, b) \subseteq \Fg^{\alg{B}} \emptyset \\
  & \iff \pair{a}{b} \in \Leibniz^{\alg{B}}_{\logic{L}} \Fg^{\alg{B}} \emptyset \\
  & \iff \pair{a}{b} \in \Delta_{\alg{B}} \\
  & \iff \pair{a}{b} \in \Delta_{\alg{A}} = \Leibniz^{\alg{A}} \Fg^{\alg{A}}_{\logic{L}} \emptyset,
\end{align*}
  where the second equivalence holds because $\Delta(a, b) \subseteq \alg{A}$.

  Conversely, suppose that $\logic{L}$ has the absolute FEP and consider a submatrix $\pair{\alg{A}}{F}$ of a model $\pair{\alg{B}}{G}$ of $\logic{L}$ and an $\logic{L}$-filter $F' \supseteq F$ on $\alg{A}$. Because $\logic{L}$ has a set of congruence formulas without parameters, $\Leibniz^{\alg{A}} F = \alg{A}^{2} \cap \Leibniz^{\alg{B}} G$, and thus $\alg{A} / F \leq \alg{B} / G$. Let $\pi\colon \alg{B} \to \alg{B} / G$ be the quotient map. Because $\logic{L}$ is protoalgebraic and $F' \supseteq F$, the congruence $\Leibniz^{\alg{A}} F$ is compatible with $F'$ and thus $\pi[F']$ is an $\logic{L}$-filter on $\alg{A} / F$. By the absolute FEP, $F'$ extends to an $\logic{L}$-filter $H'$ on $\alg{B} / G$. Now consider the $\logic{L}$-filter $G' \assign \pi^{-1}[H']$ on $\alg{B}$. Because $\pi[G]$ is the smallest filter on $\alg{B}  / G$, it follows that $G' \supseteq G$. Because $\pi[F']$ is the restriction of $H'$ to $\alg{A} / F$, it follows that $F' = \pi^{-1}[\pi[F']]$ is the restriction of $G' = \pi^{-1}[H']$ to $\alg{A}$.
\end{proof}

  The following observation is well known.

\begin{fact} \label{fact: global ddt}
  A logic has the global DDT if and only if it has both the parametrized and the local DDT.
\end{fact}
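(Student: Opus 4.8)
The left-to-right direction is immediate and requires no work beyond unwinding the definitions: a single parameter-free set $\ddtset(x,y)$ witnessing the global DDT is at the same time a parametrized DDT (read $\ddtset$ as a set with an empty tuple of parameters, so that the clause ``for some tuple $\tuple{\chi}$'' becomes vacuous) and a local DDT (take the singleton family $\ddtfamily = \{\ddtset\}$). So the plan is to concentrate on the converse.

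For the converse I would follow exactly the pattern by which Corollary~\ref{cor: global edcf} was extracted from Theorem~\ref{thm: removing locality}: use the parametrized DDT to single out, from the local DDT family, one set that already behaves globally. Concretely, let $\ddtset'(x,y,\tuple{z})$ witness the parametrized DDT and let $\ddtfamily = \{\ddtset_{j}(x,y)\}_{j \in J}$ witness the local DDT. Applying the parametrized DDT to the trivially valid consequence $\ddtset'(x,y,\tuple{z}) \vdash_{\logic{L}} \ddtset'(x,y,\tuple{z})$ (witnessed by the identity tuple $\tuple{\chi} = \tuple{z}$) yields the detachment rule $\ddtset'(x,y,\tuple{z}), x \vdash_{\logic{L}} y$; feeding this same instance ($\Gamma = \ddtset'(x,y,\tuple{z})$, $\varphi = x$, $\psi = y$) into the local DDT produces a fixed index $j \in J$ with $\ddtset'(x,y,\tuple{z}) \vdash_{\logic{L}} \ddtset_{j}(x,y)$. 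I claim this $\ddtset_{j}$ is a global DDT.

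To verify the biconditional $\Gamma, \varphi \vdash_{\logic{L}} \psi \iff \Gamma \vdash_{\logic{L}} \ddtset_{j}(\varphi,\psi)$, I would argue the two directions separately. For the right-to-left inclusion, every member of $\ddtfamily$ detaches: taking $\Gamma = \ddtset_{j}(x,y)$, $\varphi = x$, $\psi = y$ in the local DDT, the right-hand side holds by reflexivity, so $\ddtset_{j}(x,y), x \vdash_{\logic{L}} y$; by structurality $\ddtset_{j}(\varphi,\psi), \varphi \vdash_{\logic{L}} \psi$, and cutting against $\Gamma \vdash_{\logic{L}} \ddtset_{j}(\varphi,\psi)$ gives $\Gamma, \varphi \vdash_{\logic{L}} \psi$. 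For the left-to-right inclusion, suppose $\Gamma, \varphi \vdash_{\logic{L}} \psi$. The parametrized DDT furnishes a tuple of formulas $\tuple{\chi}$ with $\Gamma \vdash_{\logic{L}} \ddtset'(\varphi,\psi,\tuple{\chi})$; substituting $x \mapsto \varphi$, $y \mapsto \psi$, $\tuple{z} \mapsto \tuple{\chi}$ into $\ddtset'(x,y,\tuple{z}) \vdash_{\logic{L}} \ddtset_{j}(x,y)$ and applying cut yields $\Gamma \vdash_{\logic{L}} \ddtset_{j}(\varphi,\psi)$.

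The only point that needs care, and which I regard as the crux, is that the chosen index $j$ must be independent of $\Gamma, \varphi, \psi$; this is precisely why the extraction is performed on the generic instance in the variables $x, y, \tuple{z}$, after which structurality transports the single consequence $\ddtset'(x,y,\tuple{z}) \vdash_{\logic{L}} \ddtset_{j}(x,y)$ to all instances at once. Both invocations of cut rest only on properties (ii) and (iii) of a logic (transitivity together with finitarity, and structurality), so no additional hypothesis such as protoalgebraicity is needed, which is why the statement holds for every logic.
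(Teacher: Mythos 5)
Your proposal is correct and follows essentially the same route as the paper: both extract a single set $\ddtset_{j}$ from the local family by applying the local DDT to the detachment rule $\ddtset'(x,y,\tuple{z}), x \vdash_{\logic{L}} y$ derived from the parametrized DDT, and then use structurality and cut to transport $\ddtset'(x,y,\tuple{z}) \vdash_{\logic{L}} \ddtset_{j}(x,y)$ to all instances. Your write-up is merely more explicit about the easy direction and about why the index $j$ is uniform.
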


\begin{proof}
  Let $\ddtfamily$ and $\ddtset'$ be a family of sets of formulas and a set of formulas witnessing the local DDT and the parametrized DDT for a logic $\logic{L}$. Then $x, \ddtset'(x, y, \tuple{z}) \vdash_{\logic{L}} y$, so $\ddtset'(x, y, \tuple{z}) \vdash \ddtset(x, y)$ for some $\ddtset \in \ddtfamily$, and by structurality $\ddtset'(\varphi, \psi, \tuple{\alpha}) \vdash_{\logic{L}} \ddtset(\varphi, \psi)$ for each tuple of formulas $\tuple{\alpha}$. But then $\Gamma, \varphi \vdash_{\logic{L}} \psi$ implies that $\Gamma \vdash_{\logic{L}} \ddtset'(\varphi, \psi, \tuple{\alpha})$, so $\Gamma \vdash_{\logic{L}} \ddtset(\varphi, \psi)$. The set $\ddtset(x, y)$ therefore shows that $\logic{L}$ has the global DDT.
\end{proof}

\begin{corollary} \label{cor: ddt = edcf}
  An algebraizable logic has the global DDT if and only if it has the global EDCF.
\end{corollary}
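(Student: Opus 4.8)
The plan is to assemble the corollary by chaining together the four ingredients that have already been established: the decomposition of the global DDT in Fact~\ref{fact: global ddt}, the two transfer results (Theorems~\ref{thm: pddt = pedcf} and~\ref{thm: lddt = ledcf}), and the decomposition of the global EDCF in Corollary~\ref{cor: global edcf}. The fixed ambient class throughout will be $\Alg \logic{L}$. The first thing I would record is that an algebraizable logic is in particular weakly algebraizable (by the definition of algebraizability as weak algebraizability together with equivalentiality) and that its algebraic counterpart $\Alg \logic{L}$ is closed under subalgebras (the third characterization of algebraizability recalled in Section~\ref{sec: leibniz}). These two observations are exactly what license the applications of Theorems~\ref{thm: pddt = pedcf} and~\ref{thm: lddt = ledcf}, respectively.

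Next I would run the chain of equivalences. By Fact~\ref{fact: global ddt}, $\logic{L}$ has the global DDT if and only if it has both the parametrized DDT and the local DDT. Since $\logic{L}$ is weakly algebraizable, Theorem~\ref{thm: pddt = pedcf} replaces ``parametrized DDT'' by ``parametrized EDCF on $\Alg \logic{L}$''; since $\logic{L}$ is algebraizable, Theorem~\ref{thm: lddt = ledcf} replaces ``local DDT'' by ``local EDCF on $\Alg \logic{L}$''. Hence $\logic{L}$ has the global DDT if and only if it has both a parametrized EDCF and a local EDCF on $\Alg \logic{L}$. Finally, Corollary~\ref{cor: global edcf}, instantiated with $\class{K} \assign \Alg \logic{L}$, identifies the conjunction of these two properties with the global EDCF on $\Alg \logic{L}$, which closes the loop and yields the asserted equivalence.

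The argument involves no genuinely hard step; its only delicate point is keeping the underlying class fixed as $\Alg \logic{L}$ across all four citations. The two transfer theorems deliver EDCFs on $\Alg \logic{L}$, whereas Corollary~\ref{cor: global edcf} is stated for an arbitrary class $\class{K}$, so taking $\class{K} \assign \Alg \logic{L}$ lets the bookkeeping go through without friction. If anything deserves explicit comment, it is only that algebraizability guarantees closure of $\Alg \logic{L}$ under subalgebras, which is precisely the standing hypothesis under which the local-EDCF characterization (Theorem~\ref{thm: baby ledcf}, underlying Theorem~\ref{thm: lddt = ledcf}) is available; no appeal to a larger ambient prevariety is then needed.
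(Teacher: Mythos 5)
Your proposal is correct and follows exactly the paper's own argument: decompose the global DDT via Fact~\ref{fact: global ddt}, transfer the parametrized and local pieces via Theorems~\ref{thm: pddt = pedcf} and~\ref{thm: lddt = ledcf}, and recombine via Corollary~\ref{cor: global edcf}. The extra bookkeeping you supply (algebraizable implies weakly algebraizable, and $\Alg \logic{L}$ is closed under subalgebras) is exactly what justifies the two transfer steps.
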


\begin{proof}
  This follows from the equivalence for the local and the parametrized DDT and EDCF, the fact that the global DDT is the conjunction of the local and the parametrized DDT (Fact~\ref{fact: global ddt}), and the fact that the same holds for the EDCF (Corollary~\ref{cor: global edcf}).
\end{proof}

  A more interesting semantic correlate of the global DDT is stated in terms of compact filters. Recall that an $\logic{L}$-filter $F$ on an algebra $\alg{A}$ is \emph{compact} if has the form $F = \Fg^{\alg{A}}_{\logic{L}} X$ for some finite $X \subseteq \alg{A}$, or equivalently if it is a compact element of the lattice $\Fi_{\logic{L}} \alg{A}$ of all $\logic{L}$-filters on $\alg{A}$.

\begin{theorem}[{\cite[Theorem~6.28]{font16}}] \label{thm: global ddt}
  A protoalgebraic logic has the global DDT if and only if the join semilattice of compact $\logic{L}$-filters on each algebra $\alg{A}$ is dually Brouwerian: for all compact $F, G \in \Fi_{\logic{L}} \alg{A}$ there is a smallest compact $H \in \Fi_{\logic{L}} \alg{A}$ such that $G \leq F \vee H$, where the join takes place in the lattice $\Fi_{\logic{L}} \alg{A}$, or equivalently in the join semilattice of compact $\logic{L}$-filters on $\alg{A}$.
\end{theorem}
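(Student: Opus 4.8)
The plan is to prove the two implications separately, using throughout the standard characterization of a global DDT set: a set of formulas $\ddtset(x,y)$ in the two displayed variables witnesses the global DDT if and only if (a) $x, \ddtset(x,y) \vdash_{\logic{L}} y$ and (b) $\Gamma, \varphi \vdash_{\logic{L}} \psi$ implies $\Gamma \vdash_{\logic{L}} \ddtset(\varphi,\psi)$. On the semantic side, the dually Brouwerian condition furnishes, for compact $F, G$, a \emph{co-residual} $G \ominus F$, namely the least compact $H$ with $G \subseteq F \vee H$, where $F \vee H = \Fg^{\alg{A}}_{\logic{L}}(F \cup H)$. Since $\logic{L}$ is finitary, I may also assume at the outset that any global DDT set is finite: if $x, \ddtset(x,y) \vdash_{\logic{L}} y$ with $\ddtset$ possibly infinite, finitarity yields a finite $\ddtset_{0} \subseteq \ddtset$ with $x, \ddtset_{0}(x,y) \vdash_{\logic{L}} y$, and $\ddtset_{0}$ is readily seen to witness the DDT as well.

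For the forward implication I would first transfer the DDT from the consequence relation to filter generation on an arbitrary algebra: for every compact filter $K$ on $\alg{A}$ and all $a, b \in \alg{A}$ one has $b \in \Fg^{\alg{A}}_{\logic{L}}(K \cup \{a\})$ if and only if $\ddtset(a,b) \subseteq K$, where $\ddtset(a,b)$ is the finite set of values of the formulas of $\ddtset$ under $x \mapsto a$, $y \mapsto b$ (this transfer of the DDT to all algebras is standard for finitary structural logics). Given compact $F = \Fg^{\alg{A}}_{\logic{L}}(a_{1}, \dots, a_{m})$ and $G = \Fg^{\alg{A}}_{\logic{L}}(b_{1}, \dots, b_{n})$, I then detach the generators $a_{1}, \dots, a_{m}$ of $F$ from each $b_{j}$ one at a time, iterating the transferred equivalence, to obtain a finite set $\ddtset^{\ast} \subseteq \alg{A}$ such that $G \subseteq F \vee H$ if and only if $\ddtset^{\ast} \subseteq H$. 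Hence $H \assign \Fg^{\alg{A}}_{\logic{L}} \ddtset^{\ast}$ is compact and is the least compact filter with $G \subseteq F \vee H$; this is exactly the dually Brouwerian condition.

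For the reverse implication I would extract the DDT set from the formula algebra, on which the dually Brouwerian condition holds by hypothesis. Fix two distinct variables $x, y$ and let $H \assign \Cn_{\logic{L}}(y) \ominus \Cn_{\logic{L}}(x)$ be the co-residual of the principal theories, which is compact, say $H = \Cn_{\logic{L}}(\ddtset(x,y))$ for a finite set $\ddtset(x,y)$. Condition (a) is immediate, since $\Cn_{\logic{L}}(y) \subseteq \Cn_{\logic{L}}(x) \vee H = \Cn_{\logic{L}}(\{x\} \cup \ddtset(x,y))$ gives $x, \ddtset(x,y) \vdash_{\logic{L}} y$. For condition (b), suppose $\Gamma, \varphi \vdash_{\logic{L}} \psi$; by finitarity $\Cn_{\logic{L}}(\Gamma)$ is a compact theory and $\psi \in \Cn_{\logic{L}}(\{\varphi\} \cup \Gamma)$, so $\Cn_{\logic{L}}(\Gamma)$ is a compact filter witnessing $\Cn_{\logic{L}}(\psi) \subseteq \Cn_{\logic{L}}(\varphi) \vee \Cn_{\logic{L}}(\Gamma)$. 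The least such witness is the co-residual $\Cn_{\logic{L}}(\psi) \ominus \Cn_{\logic{L}}(\varphi)$, and structurality (applying the substitution $x \mapsto \varphi$, $y \mapsto \psi$ to the generic co-residual $H$) identifies this co-residual with $\Cn_{\logic{L}}(\ddtset(\varphi, \psi))$, whence $\ddtset(\varphi, \psi) \subseteq \Cn_{\logic{L}}(\Gamma)$, that is, $\Gamma \vdash_{\logic{L}} \ddtset(\varphi, \psi)$.

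The crux — and the step I expect to fight with — is the last one: showing that the co-residual behaves uniformly under substitution and that the generic co-residual $H = \Cn_{\logic{L}}(y) \ominus \Cn_{\logic{L}}(x)$ is generated by formulas in the two variables $x, y$ alone, as a global DDT set must be. Structurality yields only the inclusion $\Cn_{\logic{L}}(\psi) \ominus \Cn_{\logic{L}}(\varphi) \subseteq \Cn_{\logic{L}}(\sigma[H])$ for $\sigma \colon x \mapsto \varphi,\, y \mapsto \psi$; the reverse inclusion, and the elimination of auxiliary variables, rest on the uniqueness of the least witness together with its invariance under variable renamings fixing $x$ and $y$ (if $\sigma$ is such a renaming, then both $\Cn_{\logic{L}}(\sigma[H])$ and $\Cn_{\logic{L}}(\sigma^{-1}[H])$ witness, forcing $\sigma[H] = H$), with protoalgebraicity ensuring that this filter-theoretic picture is coherent. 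Making this uniformity precise is the main obstacle.
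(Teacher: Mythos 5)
The paper does not actually prove this statement; it imports it from \cite[Theorem~6.28]{font16}, so there is no internal proof to compare against and your argument has to stand on its own. Your forward direction does: finitizing $\ddtset$ via finitarity, transferring the DDT to filter generation ($b \in \Fg^{\alg{A}}_{\logic{L}}(K \cup \{a\})$ iff $\ddtset(a,b) \subseteq K$), and iterating detachment over the generators of $F$ to exhibit $\Fg^{\alg{A}}_{\logic{L}}\ddtset^{\ast}$ as the least (compact) co-residual is a correct and standard route.

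The reverse direction, however, has a genuine gap, and it sits exactly where you say you expect to fight. Leastness of the co-residual plus structurality gives only the inclusion $\Cn_{\logic{L}}(\psi) \ominus \Cn_{\logic{L}}(\varphi) \subseteq \Cn_{\logic{L}}(\ddtset(\varphi,\psi))$, whereas condition (b) requires the opposite one, $\ddtset(\varphi,\psi) \subseteq \Cn_{\logic{L}}(\Gamma)$; that is, you need the co-residual to be \emph{natural} under arbitrary substitutions. Your proposed fix --- invariance under renamings fixing $x$ and $y$ --- does eliminate auxiliary variables, because renamings are invertible and you can squeeze $\Cn_{\logic{L}}(\sigma[\ddtset])$ against $\Cn_{\logic{L}}(\sigma^{-1}[\ddtset])$; but it says nothing about the substitution $x \mapsto \varphi$, $y \mapsto \psi$ for arbitrary formulas, which is in general neither injective nor surjective and has no inverse to play against. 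That this cannot be repaired without genuinely using protoalgebraicity --- which your argument never invokes beyond a closing gesture --- is witnessed by the identity logic $\IdL$: there every subset of every algebra is an $\IdL$-filter and $\Fg^{\alg{A}}_{\IdL} X = X$, so the least compact $H$ with $G \subseteq F \cup H$ is $G \setminus F$ and every compact filter semilattice is dually Brouwerian; yet your construction yields $H = \Cn_{\IdL}(y) \ominus \Cn_{\IdL}(x) = \{y\}$, hence $\ddtset = \{y\}$, and condition (b) already fails for $\Gamma = \emptyset$ and $\varphi = \psi$, precisely because the substitution is non-injective and $\Cn_{\IdL}(\psi) \ominus \Cn_{\IdL}(\psi) = \emptyset \neq \{\psi\}$. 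So the reverse implication is not a formal consequence of dual Brouwerianness plus structurality: protoalgebraicity (via a protoimplication set, or the transfer machinery for protoalgebraic logics) must be brought in to propagate the co-residual from the generic pair $(x,y)$ to arbitrary $(\varphi,\psi)$, and that step is the actual content of the theorem.
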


  It is instructive to compare the condition in the above theorem to condition (iii) in Theorem~\ref{thm: edcf} characterizing the EDCF: informally speaking, we have replaced the filter $G$ (but not the filter $F$) by a congruence.

  To be able to deduce the EDCF from the DDT, it suffices to assume that the logic in question satisfies the EDCF$_{0}$, the equational definability of $0$-generated filters. Recall that $\logic{L}$ enjoys EDCF$_{0}$ on a class $\class{K}$ if the smallest $\logic{L}$-filter on each $\alg{A} \in \class{K}$, i.e.\ the filter $\Fg^{\alg{A}}_{\logic{L}} \emptyset$, is definable by some set of equations in one variable $\boldTheta_{0}(x)$. The parametrized and local forms are defined as expected.

\begin{fact} \label{fact: ddt implies edcf}
  If $\logic{L}$ enjoys the (finitary) DDT and the (finitary) EDCF$_{0}$ on a class $\class{K}$, then it also enjoys the (finitary) EDCF on $\class{K}$. The same implication holds if we replace all of these properties by their parametrized or local forms.
\end{fact}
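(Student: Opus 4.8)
The plan is to combine the deduction theorem with the definability of the smallest filter: the DDT lets me reduce membership in an arbitrary compact filter to membership in the $0$-generated filter, and EDCF$_{0}$ then turns that membership into equations.

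First I would record the filter-theoretic form of the DDT. Writing $\ddtset^{\alg{A}}(a, b)$ for the set of values of the formulas in $\ddtset(x, y)$ under the assignment $x \mapsto a$, $y \mapsto b$, the global DDT for $\logic{L}$ yields, for every algebra $\alg{A}$ and all $X \subseteq \alg{A}$ and $a, b \in \alg{A}$,
\[
  b \in \Fg^{\alg{A}}_{\logic{L}}(X \cup \{ a \}) \iff \ddtset^{\alg{A}}(a, b) \subseteq \Fg^{\alg{A}}_{\logic{L}}(X).
\]
This is standard: the left-to-right direction uses that $\{ c \in \alg{A} : \ddtset^{\alg{A}}(a, c) \subseteq \Fg^{\alg{A}}_{\logic{L}}(X) \}$ is an $\logic{L}$-filter containing $X \cup \{ a \}$ (a routine closure check relying on the derived rule $\ddtset(x, \gamma_{1}), \dots, \ddtset(x, \gamma_{m}) \vdash_{\logic{L}} \ddtset(x, \psi)$ for each valid rule $\gamma_{1}, \dots, \gamma_{m} \vdash_{\logic{L}} \psi$), while the converse uses the valid rule $x, \ddtset(x, y) \vdash_{\logic{L}} y$. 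All the auxiliary rules ($x, \ddtset(x, y) \vdash_{\logic{L}} y$, $y \vdash_{\logic{L}} \ddtset(x, y)$, and $\emptyset \vdash_{\logic{L}} \ddtset(x, x)$) are immediate from the DDT.

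Second, I would iterate this $n$ times, stripping off the generators $a_{n}, a_{n-1}, \dots, a_{1}$ one at a time, to obtain a set of formulas $\widehat{\ddtset}_{n}(x_{1}, \dots, x_{n}, y)$ — the $n$-fold iterate of $\ddtset$, with $\widehat{\ddtset}_{0}(y) \assign \{ y \}$ — such that for all $\alg{A} \in \class{K}$
\[
  b \in \Fg^{\alg{A}}_{\logic{L}}(a_{1}, \dots, a_{n}) \iff \widehat{\ddtset}^{\alg{A}}_{n}(a_{1}, \dots, a_{n}, b) \subseteq \Fg^{\alg{A}}_{\logic{L}} \emptyset.
\]
Finally I would invoke EDCF$_{0}$: if $\boldTheta_{0}(x)$ defines the smallest filter on $\class{K}$, then setting
\[
  \boldTheta_{n}(x_{1}, \dots, x_{n}, y) \assign \bigcup_{\delta \in \widehat{\ddtset}_{n}(x_{1}, \dots, x_{n}, y)} \boldTheta_{0}(\delta),
\]
where each formula $\delta$ is substituted for the single variable of $\boldTheta_{0}$, converts the condition $\widehat{\ddtset}^{\alg{A}}_{n}(\dots) \subseteq \Fg^{\alg{A}}_{\logic{L}} \emptyset$ into $\alg{A} \vDash \boldTheta_{n}(a_{1}, \dots, a_{n}, b)$. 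The sequence $(\boldTheta_{n})_{n \in \omega}$ is then a global EDCF on $\class{K}$.

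For the parametrized, local, and parametrized local forms the same three steps go through; only the combinatorics of the iterate $\widehat{\ddtset}_{n}$ changes. In the parametrized case each application of the DDT to a formula $\delta$ introduces its own parameter tuple, and since the parametrized EDCF allows an arbitrary (possibly infinite) parameter tuple $\tuple{z}$, I collect all of these into one parameter block. In the local case each application selects a witness $\ddtset \in \ddtfamily$, and distinct formulas $\delta$ may demand distinct witnesses, so the iterate becomes a family $\widehat{\ddtfamily}_{n}$ indexed by these choices, and correspondingly $\boldPsi_{n}$ becomes a family; the parametrized local form combines the two constructions. The one place where genuine care is needed is bookkeeping this iteration so that the finitary hypotheses are preserved: if $\ddtfamily$ is a finite family of finite sets and $\boldTheta_{0}$ is finite, then at each of the $n$ stages there are only finitely many formulas to detach and finitely many witnesses to choose, so $\widehat{\ddtfamily}_{n}$ — and hence $\boldPsi_{n}$ — remains a finite family of finite sets and the accumulated parameter block stays finite. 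Thus the finitary DDT together with the finitary EDCF$_{0}$ yields the finitary EDCF in each of the four forms.
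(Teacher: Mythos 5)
Your proof is correct and takes essentially the same route as the paper's: iterate the deduction theorem to reduce $b \in \Fg^{\alg{A}}_{\logic{L}}(a_{1}, \dots, a_{n})$ to the condition that the $n$-fold iterate $\ddtset_{n}(a_{1}, \dots, a_{n}, b)$ lands in $\Fg^{\alg{A}}_{\logic{L}} \emptyset$, then substitute each resulting formula into $\boldTheta_{0}$. The paper's proof is simply a terser version of yours, treating only the global case explicitly and leaving implicit both the transfer of the DDT to filters on arbitrary algebras and the local/parametrized bookkeeping that you spell out.
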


\begin{proof}
  We only consider the global case. Suppose that DDT is witnessed by the set $\ddtset(x, y)$ and the EDCF$_{0}$ by the set $\boldTheta_{0}(x)$. Then
\begin{align*}
  b \in \Fg^{\alg{A}}_{\logic{L}} (a_{1}, \dots, a_{n}) & \iff \ddtset_{n}(a_{1}, \dots, a_{n}, b) \subseteq \Fg^{\alg{A}}_{\logic{L}} \emptyset \\
  & \iff \alg{A} \vDash \boldTheta_{0}(\ddtformula(a_{1}, \dots, a_{n}, b)) \text{ for each } \ddtformula \in \ddtset_{n}.
\end{align*}
  If $\ddtset$ and $\boldTheta_{0}$ are finite sets, then so is $\bigcup \set{\boldTheta_{0}(\alpha(x_{1}, \dots, x_{n}, y)}{\alpha \in \ddtset_{n}}$.
\end{proof}

  The EDCF$_{0}$ is a very common property in practice, even among logics which lack the EDCF: often the smallest $\logic{L}$-filter on algebras in $\Alg \logic{L}$ is the principal upset generated by some constant $1$ and is therefore defined by one of the equations $1 \wedge x \equals x$ or $1 \vee x \equals x$ or $1 \equals x$.

  Nonetheless, we now construct an example of a logic which enjoys the DDT but not the EDCF. The example relies on the observation that the DDT is preserved under adding axioms to a logic and adding extra operations to its signature, while the EDCF is not.

  The signature of our example expands the signature of intuitionistic logic (containing in particular the top constant $1$) by two unary operators, $\circ$ and~$\bullet$. The axiomatization of this logic $\ILbc$ (over some infinite set of variables $\Var$) consists of an axiomatization of intuitionistic logic plus the axiom $\emptyset \vdash \circ x$. This logic is non-trivial, i.e.\ $\emptyset \nvdash_{\ILbc} x$ for $x \in \Var$. (This is witnessed e.g.\ by the two-element Heyting algebra with $0 \leq 1$ expanded by $\circ\colon a \mapsto 1$ and $\bullet\colon a \mapsto 1$ for $a \in \{ 0, 1 \}$.)

\begin{lemma}
  $\bullet \varphi \dashv \vdash_{\ILbc} \bullet \psi$ if and only if $\varphi = \psi$.
\end{lemma}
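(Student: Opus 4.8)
The plan is to prove the nontrivial left-to-right implication in the stronger form that $\bullet \varphi \vdash_{\ILbc} \bullet \psi$ already forces $\varphi = \psi$; the converse is immediate, since $\varphi = \psi$ gives $\bullet \varphi = \bullet \psi$, and interderivability trivially implies the one-directional consequence $\bullet \varphi \vdash_{\ILbc} \bullet \psi$. The guiding idea is that, because $\bullet$ carries no axioms, the logic cannot ``see inside'' a formula of the form $\bullet \chi$: such formulas behave as opaque atoms. I would make this precise through a translation of $\ILbc$ into intuitionistic logic which replaces each $\bullet$-headed formula by a fresh variable indexed by the formula itself.

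Concretely, first I would work with intuitionistic logic $\IL$ over the enlarged set of variables $\Var^{\sharp} \assign \Var \cup \set{q_{\chi}}{\chi \text{ a formula of } \ILbc}$, where the $q_{\chi}$ are pairwise distinct fresh variables. I then define a map $t$ from formulas of $\ILbc$ to formulas of $\IL$ by recursion: $t$ fixes each variable in $\Var$, commutes with the intuitionistic connectives and constants, sends $\circ \chi \mapsto 1$, and sends $\bullet \chi \mapsto q_{\chi}$ (halting the recursion at $\bullet$). The crucial point is that $q_{\chi}$ is indexed by the \emph{syntactic} formula $\chi$, so that $\varphi \neq \psi$ yields $t(\bullet \varphi) = q_{\varphi} \neq q_{\psi} = t(\bullet \psi)$ even when $\varphi$ and $\psi$ are intuitionistically equivalent (as with $p \wedge q$ and $q \wedge p$); this is exactly what is needed for a statement about syntactic identity rather than mere equivalence.

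The heart of the argument is the soundness claim that $\Gamma \vdash_{\ILbc} \delta$ implies $t[\Gamma] \vdash_{\IL} t(\delta)$. I would establish this by showing that the relation $\vdash^{\ast}$ defined by $\Gamma \vdash^{\ast} \delta$ iff $t[\Gamma] \vdash_{\IL} t(\delta)$ is a structural consequence relation validating all the generating rules of $\ILbc$, whence $\vdash_{\ILbc} \subseteq \vdash^{\ast}$. Reflexivity and cut for $\vdash^{\ast}$ are inherited directly from $\IL$; the axiom $\emptyset \vdash \circ x$ is sent by $t$ to $1$, which is an intuitionistic theorem; and each intuitionistic generating rule $\alpha_{1}, \dots, \alpha_{m} \vdash \beta$ is preserved because $t$ commutes with the intuitionistic connectives, so that any instance of it maps to an instance of the same intuitionistic rule. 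The main obstacle, and the step I would handle most carefully, is closure under structurality: an $\ILbc$-substitution $\sigma$ must be matched by an $\IL$-substitution $\sigma^{t}$ with $t(\sigma \chi) = \sigma^{t}(t(\chi))$. I would set $\sigma^{t}(p) \assign t(\sigma(p))$ on $\Var$ and $\sigma^{t}(q_{\chi}) \assign q_{\sigma \chi}$ on the fresh variables, and verify $t \circ \sigma = \sigma^{t} \circ t$ by a routine induction on formulas, the $\bullet$-clause being precisely where the indexing convention $q_{\chi} \mapsto q_{\sigma \chi}$ does its work.

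Finally I would conclude: if $\bullet \varphi \vdash_{\ILbc} \bullet \psi$, then by soundness $q_{\varphi} = t(\bullet \varphi) \vdash_{\IL} t(\bullet \psi) = q_{\psi}$. But in intuitionistic logic a variable entails a distinct variable only vacuously; the valuation into the two-element Heyting algebra sending $q_{\varphi} \mapsto 1$ and every other variable to $0$ refutes $q_{\varphi} \vdash_{\IL} q_{\psi}$ whenever $q_{\varphi} \neq q_{\psi}$. Hence $q_{\varphi} = q_{\psi}$, and so $\varphi = \psi$ by injectivity of the indexing, which yields the desired implication and completes the proof.
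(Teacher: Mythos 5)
Your proof is correct, but it takes a genuinely different route from the paper's. The paper argues model-theoretically on modified formula algebras: it takes the algebra $\Fm_{\varphi}$ obtained from $\Fm$ by redefining $\bullet$ at the single point $\varphi$ (sending it to $1$), observes that every $\ILbc$-theory remains a filter on $\Fm_{\varphi}$ because the axiomatization never mentions $\bullet$, and uses the homomorphism $\Fm \to \Fm_{\varphi}$ extending the identity on $\Var$ to separate $\bullet\varphi$ from $\bullet\psi$; this forces a reduction without loss of generality to the case where $\varphi$ is not a subformula of $\psi$ (so that the redefinition does not disturb $\bullet\psi$), plus a separate argument, via a second modified algebra, that $\bullet\psi$ is not a theorem. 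Your ``atomization'' translation $t$ into $\IL$ over the enlarged variable set, with $\bullet\chi \mapsto q_{\chi}$ and $\circ\chi \mapsto 1$, replaces all of this by a single soundness lemma whose only delicate point is the commutation $t \circ \sigma = \sigma^{t} \circ t$ needed for structurality, which you handle correctly by setting $\sigma^{t}(q_{\chi}) \assign q_{\sigma\chi}$. In exchange you get a cleaner and slightly stronger conclusion: the one-directional consequence $\bullet\varphi \vdash_{\ILbc} \bullet\psi$ already forces $\varphi = \psi$, with no subformula caveat and no auxiliary theoremhood claim. The paper's approach has the advantage of staying entirely within the matrix-and-filter toolkit used throughout the rest of the paper (it directly exhibits a model of $\ILbc$ witnessing the failure), whereas yours is a reduction to a known logic; both are sound, and either would serve the application in Example~\ref{example: ddt but not edc}.
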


\begin{proof}
    The right-to-left implication is trivial. Conversely, consider distinct formulas $\varphi$ and $\psi$. We shall assume without loss of generality that $\varphi$ is not a subformula of $\psi$, and prove under this assumption that $\bullet \varphi \nvdash_{\ILbc} \bullet \psi$.

  Let $\Fm_{\varphi}$ be the algebra obtained from $\Fm$ by redefining the operation $\bullet$ at exactly one point, namely $\bullet\colon \varphi \mapsto 1$, and keeping all other operations the same. In particular, for each homomorphism $h\colon \Fm \to \Fm_{\varphi}$ there is a homomorphism $h'\colon \Fm \to \Fm$ such that $h'(\alpha) = h(\alpha)$ whenever $\alpha$ does not contain $\bullet$, namely the homomorphism extending of the map $x \mapsto h(x)$ for $x \in \Var$. Since $\ILbc$ is axiomatized by rules which do not contain $\bullet$, this implies that each theory of $\ILbc$ is a filter on $\Fm_{\varphi}$. In more detail, consider a rule $\Gamma \vdash \varphi$ valid in $\ILbc$ which does not contain $\bullet$. If $T$ is a theory of $\ILbc$ and $h[\Gamma] \subseteq T$ for some homomorphism ${h\colon \Fm \to \Fm_{\varphi}}$, then $h'[\Gamma] \subseteq T$ for this homomorphism $h'\colon \Fm \to \Fm$, therefore $h'(\varphi) \in T$ and $h(\varphi) \in T$.

  The identity map on $\Var$ extends to a homomorphism ${h\colon \Fm \to \Fm_{\varphi}}$. Observe that $h(\alpha) = \alpha$ for every formula $\alpha$ which does not contain $\bullet \varphi$ as a subformula. In particular, $h(\bullet \psi) = \bullet \psi$. On the other hand, $h(\bullet \varphi) = \bullet^{\Fm_{\varphi}} h(\varphi) = \bullet^{\Fm_{\varphi}} \varphi = 1$. Take $T$ to be the set of theorems of $\ILbc$. If $\emptyset \nvdash_{\ILbc} \bullet \psi$, then $T$ is a filter of $\ILbc$ on $\Fm_{\varphi}$ such that $h(\bullet \varphi) \in T$ and $h(\bullet \psi) \notin T$, so $\bullet \varphi \nvdash_{\ILbc} \bullet \psi$ as desired.

  It remains to show that $\emptyset \nvdash_{\ILbc} \bullet \psi$. Take $\Fm'_{\psi}$ to be the algebra obtained from $\Fm$ by redefining the operation $\bullet$ at exactly one point, namely $\bullet\colon \psi \mapsto x$ for some $x \in \Var$. Again, each theory of $\ILbc$ and in particular the set of theorems~$T$, is a filter of $\ILbc$ on $\Fm'_{\psi}$. The map $x \mapsto x$ for $x \in \Var$ again extends to a homomorphism $g\colon \Fm \to \Fm'_{\psi}$. This time $g(\bullet \psi) = x$. Because $\ILbc$ is not a trivial logic, $x \notin T$, so $g$ witnesses that $\emptyset \nvdash_{\ILbc} \bullet \psi$.
\end{proof}

\begin{example} \label{example: ddt but not edc}
  The logic $\ILbc$ has the global DDT but neither the local nor the parametrized EDCF.
\end{example}

\begin{proof}
  The logic $\ILbc$ has the DDT because intuitionistic logic has the DDT and the DDT is preserved under expanding the signature and adding axioms.

  To prove that $\ILbc$ does not have the local EDCF, it will suffice to show that it does not have the absolute FEP on $\Alg \ILbc$. (This implication of Theorem~\ref{thm: baby ledcf} does not depend on the assumption that $\Alg \ILbc$ is closed under subalgebras.) That is, it suffices to find algebras $\alg{A}, \alg{B} \in \Alg \ILbc$ such that $\alg{A} \leq \alg{B}$ but $\Fg^{\alg{A}}_{\logic{L}} \emptyset \neq \alg{A} \cap \Fg^{\alg{B}}_{\logic{L}} \emptyset$. The previous lemma implies that $\Fm \in \Alg \ILbc$: if $\varphi \neq \psi$, then $\pair{\varphi}{\psi} \notin \Leibniz^{\Fm} T$ for $T$ either the theory generated by $\bullet \varphi$ or the theory generated by $\bullet \psi$. Now take $\Fm'$ be the subalgebra of $\Fm$ generated by the set $\set{\circ x }{x \in \Var}$. Observe that map $x \mapsto \circ x$ for $x \in \Var$ extends to an isomorphism $i\colon \Fm \to \Fm'$. Since $\Fm \in \Alg \ILbc$, also $\Fm' \in \Alg \ILbc$.

  Because $\ILbc$ is not a trivial logic, $x \notin \Fg^{\Fm}_{\logic{L}} \emptyset$ for each $x \in \Var$. Because $i$ is an isomorphism, it follows that $\circ x \notin \Fg^{\Fm'}_{\logic{L}} \emptyset$. But $\circ x \in \Fg^{\Fm}_{\logic{L}} \emptyset$ for each $x \in \Var$ thanks to the axiom $\emptyset \vdash \circ x$. That is, $\Fm' \leq \Fm$ but $\Fg^{\Fm'}_{\logic{L}} \emptyset$ is not the restriction of $\Fg^{\Fm}_{\logic{L}} \emptyset$ to $\Fm'$.

  It remains to prove that $\ILbc$ does not have a parametrized EDCF. We prove this syntactically, by reasoning directly about the equations involved in the parametrized EDCF. Since $\Fm \in \Alg \ILbc$, suppose for the sake of contradiction that there is a set of equations $\boldTheta_{0}(x, \tuple{y})$, where $\tuple{y}$ is a possibly infinite tuple of variables, such that $\varphi \in \Fg^{\Fm}_{\logic{L}} \emptyset$ if and only if $\Fm \vDash \boldTheta_{0}(\varphi, \tuple{\alpha})$ for some tuple of formulas $\tuple{\alpha}$. Firstly, observe that over $\Fm$ each equation of the form $f(t_{1}, \dots, t_{m}) \equals g(u_{1}, \dots, u_{n})$, where $f, g$ are primitive operations and $t_{1}, \dots, t_{m}, u_{1}, \dots, u_{n}$ are terms, is either inconsistent (if $f$ and $g$ are distinct operations) or equivalent to the conjunction of $t_{1} \equals u_{1}, \dots, t_{m} \equals u_{n}$ (if $f = g$, in which case also $m = n$). Each equation in $\boldTheta_{0}(x, \tuple{y})$ is therefore either inconsistent or equivalent to a set of equations of the form $v \equals t$ where $v$ is a variable and $t$ a term. We may moreover assume that $t$ does not contain the variable $v$, otherwise the equation is either inconsistent over $\Fm$ or it has the form $v \equals v$ and as such is equivalent to the empty set of equations.

  If $v$ is one of the variables $\tuple{y}$ and $\boldTheta_{0}(x, \tuple{y})$ contains the equation $v \equals t$ with $t$ a term not containing $v$, we may remove the equation $v \equals t$ from $\boldTheta_{0}$ and substitute $v$ throughout $\boldTheta_{0}(x, \tuple{y})$ by $t$. Doing this for all such variables in $\tuple{y}$ yields a set $\boldTheta_{0}'(x, \tuple{y})$ which still satisfies the original equivalence, i.e.\ $\varphi \in \Fg^{\Fm}_{\logic{L}} \emptyset$ if and only if $\Fm \vDash \boldTheta'_{0}(\varphi, \tuple{\alpha})$ for some tuple of formulas $\tuple{\alpha}$, and moreover only contains equations of the form $x \equals t$, where $x$ is the specific variable which occurs as the first argument in $\boldTheta_{0}(x, \tuple{y})$. We may again assume that $x$ does not occur in $t$. Since $\Fm \vDash \boldTheta'_{0}(1, \tuple{\alpha})$ for some tuple of formulas $\tuple{\alpha}$, each such $t$ must be equal to $1$. But also $\Fm \vDash {\boldTheta'_{0}(x \rightarrow x, \tuple{\alpha})}$ for some $\tuple{\alpha}$, so each such $t$ must be a formula of the form $t_{1} \rightarrow t_{2}$. We have therefore obtained a contradiction.
\end{proof}

  In the opposite direction, there is no shortage of logics demonstrating that the EDCF does not imply even the parametrized local DDT (since this form of the DDT is equivalent to protoalgebraicity), for example the strong three-valued Kleene logic or the logic of order of unital meet semilattices, i.e.\ of meet semilattice with a top element $1$. A more interesting question is whether the EDCF together with protoalgebraicity implies the DDT, at least in its local or parametrized form. We now answer this question in the negative.

  Given a bounded lattice $\alg{L}$, define $\alg{L}^{+}$ as the expansion of $\alg{L}$ by the operation
\begin{align*}
  a \rightarrow b \assign \begin{cases} & 1 \text{ if } a \leq b, \\ & 0 \text{ if } a \nleq b.\end{cases}
\end{align*}
  Let $\ESL$ be the logic of order of the class $\class{K}_{\rightarrow} \assign \set{\alg{L}^{+}}{\alg{L} \text{ is a bounded lattice}}$, i.e.\ the logic determined by all matrices of the form $\pair{\alg{L}^{+}}{F}$ where $\alg{L}$ is a bounded lattice and $F$ is a lattice filter on $\alg{L}$.

\begin{example} \label{example: protoalgebraic + edcf does not imply ddt}
  $\ESL$ is a finitely equivalential logic which has an EDCF on the variety $\Alg \ESL = \AlgH \AlgS \AlgP (\class{K}_{\rightarrow})$ but not the global DDT.
\end{example}

\begin{proof}
   The class $\class{K}_{\rightarrow}$ is closed under subalgebras and ultraproducts and consists of simple algebras with a lattice reduct, so $\Alg \ESL = \AlgI \AlgS \AlgP(\class{K}) = \AlgH \AlgS \AlgP (\class{K})$ by the same argument as in the proof of Example~\ref{example: kleene has edcf}. The logic of order of $\class{K}$ coincides with the logic of order of $\AlgH \AlgS \AlgP (\class{K})$, since $\class{K} \vDash \gamma_{1} \wedge \dots \wedge \gamma_{n} \leq \varphi$ if and only if $\AlgH \AlgS \AlgP(\class{K}) \vDash \gamma_{1} \wedge \dots \wedge \gamma_{n} \leq \varphi$. The filters of $\ESL$ on $\AlgH \AlgS \AlgP (\class{K})$ are therefore precisely the non-empty lattice filters, so $\ESL$ has an EDCF on this variety.

  The logic $\ESL$ is equivalential, a set of congruence formulas being $\Delta(x, y) \assign \{ x \rightarrow y, y \rightarrow x \}$: for each lattice filter $F$ on a bounded lattice $\alg{L}$ we have $a = b$ if and only if $a \rightarrow b \in F$ and $b \rightarrow a \in F$, so $\Delta$ is a protoimplication set and
\begin{align*}
  \Delta(x_{1}, y_{1}), \Delta(x_{2}, y_{2}) \vdash_{\logic{L}} \Delta(f(x_{1}, x_{2}), f(y_{1}, y_{2}))
\end{align*}
  for each (non-constant) connective $f$ on $\ESL$, i.e.\ for $\wedge$, $\vee$, $\rightarrow$.

  To show that the logic $\ESL$ does not satisfy the global DDT, consider any non-distributive finite lattice $\alg{L}$. Then the compact filters of $\ESL$ on $\alg{L}^{+}$ form a poset isomorphic to $\alg{L}$, which is not distributive, so in particular it is not dually Brouwerian in the sense of Theorem~\ref{thm: global ddt}, therefore $\ESL$ does not have the global DDT by Theorem~\ref{thm: global ddt}.
\end{proof}

\section{Equational definition of consequence}
\label{sec: edc}

  In this final section, we return to our original motivation, namely defining logical consequence (rather than filter generation) using equations. Although we formulate no semantic characterization logics with an equational definition of consequence, we show how to prove that a logic does not have an equational definition of consequence with respect to its algebraic counterpart, using {\L}ukasiewicz logic $\Luk$ and the global modal logic $\GlobK$ as examples. This improves our previous theorems stating that these logics do not have a global EDCF. Finally, we construct a logic which has an equational definition of consequence but no global EDCF.

\begin{definition}
  A \emph{local equational definition of consequence}, or a \emph{local EDC} for short, for a logic $\logic{L}$ with respect to a class of algebras $\class{K}$ is a sequence $(\boldPsi_{n})_{n \in \omega}$ of families of sets of equations $\boldPsi_{n}(x_{1}, \dots, x_{n}, y)$, where the variables $x_{1}, \dots, x_{n}, y$ are distinct, such that
\begin{align*}
  \gamma_{1}, \dots, \gamma_{n} \vdash_{\logic{L}} \varphi \iff \class{K} \vDash \boldTheta_{n}(\gamma_{1}, \dots, \gamma_{n}, \varphi) \text{ for some } \boldTheta_{n} \in \boldPsi_{n}.
\end{align*}
  In a \emph{global equational definition of consequence} we moreover require that each $\boldPsi_{n}$ have the form $\boldPsi_{n} = \{ \boldTheta_{n} \}$ for some set of equations $\boldTheta_{n}$.
\end{definition}

  We may without loss of generality restrict to the case where $\class{K}$ is a variety in the definition of a local EDC: a local EDC only depends on the equational theory of $\class{K}$, so a local EDC with respect to $\class{K}$ is also a local EDC with respect to $\AlgH \AlgS \AlgP(\class{K})$. Less obviously, we may restrict to the case where $\class{K} \supseteq \Alg \logic{L}$.

\begin{fact} \label{fact: edct implies v l below k}
  If $\logic{L}$ has a local EDC with respect to a variety $\class{K}$, then $\Alg \logic{L} \subseteq \class{K}$.
\end{fact}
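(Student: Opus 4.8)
The plan is to reduce the containment $\Alg \logic{L} \subseteq \class{K}$, via Birkhoff's theorem, to showing that every equation valid in $\class{K}$ is valid in $\Alg \logic{L}$, and to extract the needed consequences from the local EDC using reflexivity of $\vdash_{\logic{L}}$. First I would record the following consequence of the local EDC (call it $(\ast)$): for all formulas $\gamma, \varphi$, if $\class{K} \vDash \gamma \equals \varphi$ then $\gamma \vdash_{\logic{L}} \varphi$. This follows because reflexivity gives $x \vdash_{\logic{L}} x$, so the local EDC supplies some $\boldTheta_{1} \in \boldPsi_{1}$ with $\class{K} \vDash \boldTheta_{1}(x, x)$; then whenever $\class{K} \vDash \gamma \equals \varphi$, evaluating in an arbitrary algebra of $\class{K}$ shows $\class{K} \vDash \boldTheta_{1}(\gamma, \varphi)$, and the backward direction of the local EDC yields $\gamma \vdash_{\logic{L}} \varphi$. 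Since $(\ast)$ is symmetric in $\gamma, \varphi$, the hypothesis $\class{K} \vDash \gamma \equals \varphi$ in fact gives interderivability $\gamma \dashv\vdash_{\logic{L}} \varphi$.

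Because $\class{K}$ is a variety, it is the class of models of its own equational theory, so it suffices to prove $\Alg \logic{L} \vDash s \equals t$ for each equation with $\class{K} \vDash s \equals t$. Fix such an equation, an algebra $\alg{A} \in \Alg \logic{L}$, and a valuation $v \colon \Fm \to \alg{A}$; I must show $v(s) = v(t)$. For every context $\delta(x, \tuple{z})$ we have $\class{K} \vDash \delta(s, \tuple{z}) \equals \delta(t, \tuple{z})$ (replacing equals by equals), so by $(\ast)$ the rules $\delta(s, \tuple{z}) \dashv\vdash_{\logic{L}} \delta(t, \tuple{z})$ are valid in $\logic{L}$. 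Interpreting these valid rules in an arbitrary $\logic{L}$-filter $F$ on $\alg{A}$, and choosing for each parameter tuple $\tuple{e} \in \alg{A}$ a valuation realizing $v(s)$, $v(t)$ in the distinguished slot and $\tuple{e}$ in $\tuple{z}$, I obtain $\delta^{\alg{A}}(v(s), \tuple{e}) \in F \iff \delta^{\alg{A}}(v(t), \tuple{e}) \in F$ for every unary polynomial of $\alg{A}$. By the standard indiscernibility characterization of the Leibniz congruence this says precisely $\pair{v(s)}{v(t)} \in \Leibniz^{\alg{A}} F$, and since $F$ was arbitrary, $\pair{v(s)}{v(t)} \in \bigcap_{F} \Leibniz^{\alg{A}} F$.

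Finally, $\bigcap_{F} \Leibniz^{\alg{A}} F$ is a congruence lying below each $\Leibniz^{\alg{A}} F$, hence compatible with every $\logic{L}$-filter on $\alg{A}$; since $\alg{A} \in \Alg \logic{L}$, the only such congruence is $\Delta_{\alg{A}}$. Therefore $v(s) = v(t)$, giving $\Alg \logic{L} \vDash s \equals t$ and completing the argument. The main obstacle, and the reason one cannot simply transfer $(\ast)$ from $\Fm$ to $\alg{A}$ along a surjection, is that $\theta^{\alg{A}}_{\class{K}}$-related elements of a general $\alg{A}$ need not be values of $\class{K}$-equal terms under a common valuation; the device that circumvents this is to argue equationwise, starting from a genuine $\class{K}$-identity $s \equals t$, and to use the closure of $(\ast)$ under contexts $\delta$ so as to land inside \emph{every} $\Leibniz^{\alg{A}} F$ rather than merely inside a single fixed one.
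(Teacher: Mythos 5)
Your proof is correct, and it shares the paper's key mechanism: reflexivity plus the local EDC, together with replacement of $\class{K}$-equals by $\class{K}$-equals inside the defining equations, yields $\chi(\varphi)\dashv\vdash_{\logic{L}}\chi(\psi)$ for every context $\chi$ whenever $\class{K}\vDash\varphi\equals\psi$. Where you diverge is in how this interderivability is converted into an identity of $\Alg\logic{L}$. The paper stays on the formula algebra: it shows $\pair{\varphi}{\psi}\in\Leibniz^{\Fm}T$ for every theory $T$, i.e.\ $\pair{\varphi}{\psi}\in\Tarski\logic{L}$, and then invokes Theorem~5.76 and Proposition~5.75 of the Font textbook (that $\AlgH\AlgS\AlgP(\Alg\logic{L})=\AlgH\AlgS\AlgP(\Fm/\Tarski\logic{L})$ and that the latter validates exactly the equations in $\Tarski\logic{L}$) to transfer the identity to $\Alg\logic{L}$. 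You instead fix an arbitrary $\alg{A}\in\Alg\logic{L}$ and valuation $v$, push the contextual interderivability through arbitrary filters to land $\pair{v(s)}{v(t)}$ in $\bigcap_{F}\Leibniz^{\alg{A}}F$, and conclude from the definition of $\Alg\logic{L}$ that this intersection is $\Delta_{\alg{A}}$. Your route is more self-contained --- it replaces the two cited facts about the Tarski congruence by Birkhoff's theorem and the indiscernibility description of the Leibniz congruence --- at the modest cost of handling parameters on a general algebra; this works as you describe provided the context variables $\tuple{z}$ are chosen disjoint from the variables of $s$ and $t$, which the infinite variable set always permits. Both arguments use the hypothesis that $\class{K}$ is a variety only at the final step, to pass from inclusion of equational theories to inclusion of classes.
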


\begin{proof}
  Suppose that $\logic{L}$ has a local EDC($\boldPsi$) with respect to $\class{K}$ and $\class{K} \vDash \varphi \equals \psi$. It suffices to show that $\Alg \logic{L} \vDash \varphi \equals \psi$. Let
\begin{align*}
  \Tarski \logic{L} \assign \bigcap \set{\Leibniz^{\Fm} T}{T \text{ is a theory of } \logic{L}}.
\end{align*}
  Theorem~5.76 of~\cite{font16} states that $\AlgH \AlgS \AlgP(\Alg \logic{L}) = \AlgH \AlgS \AlgP(\Fm / \Tarski \logic{L})$, so it suffices to show that $\Fm / \Tarski \logic{L} \vDash \varphi \equals \psi$. Proposition~5.75(1) of~\cite{font16} states that to this end it suffices prove that $\pair{\varphi}{\psi} \in \Tarski \logic{L}$, i.e.\ that $\pair{\varphi}{\psi} \in \Leibniz^{\Fm} T$ for each $\logic{L}$-theory $T$. This, finally, means proving that for each formula $\chi(\tuple{x}, y)$ we have $\chi(\tuple{x}, \varphi) \in T \iff \chi(\tuple{x}, \psi) \in T$, i.e.\ that $\chi(\tuple{x}, \varphi) \vdash_{\logic{L}} \chi(\tuple{x}, \psi)$ and $\chi(\tuple{x}, \psi) \vdash_{\logic{L}} \chi(\tuple{x}, \varphi)$. By symmetry, it suffices to prove that the first rule holds in $\logic{L}$. But
\begin{align*}
  \chi(\tuple{x}, \varphi) \vdash_{\logic{L}} \chi(\tuple{x}, \psi) & \iff \class{K} \vDash \boldTheta_{1}(\chi(\tuple{x}, \varphi), \chi(\tuple{x}, \psi)) \\
  & \iff \class{K} \vDash \boldTheta_{1}(\chi(\tuple{x}, \varphi), \chi(\tuple{x}, \varphi)) \\
  & \iff \chi(\tuple{x}, \varphi) \vdash_{\logic{L}} \chi(\tuple{x}, \varphi),
\end{align*}
  where the first and last equivalences use the local EDC($\boldPsi$) and the second uses the assumption that $\class{K} \vDash \varphi \equals \psi$.
\end{proof}

  We now show that $\Luk$ and $\GlobK$ do not have an EDC. The proof for {\L}ukasiewicz logic requires a basic familiarity with the logic, which the reader might obtain from~\cite{mvbook00}, while the proof for the global modal logic $\GlobK$ requires a basic familiarity with Kripke semantics, which the reader might obtain from~\cite{blackburn+derijke+venema01}.

  These proofs follow the same pattern: we start with a particular rule such that $\varphi \nvdash_{\logic{L}} \psi$, so $\Alg \logic{L} \nvDash E(\varphi, \psi)$ for $E(x, y) \assign \boldTheta_{1}(x, y)$ by the standard EDC. In both cases $\Alg \logic{L}$ is a variety generated by some manageable class of algebras~$\class{K}$. In the case of {\L}ukasiewicz logic $\class{K}$ is the singleton class consisting of the standard MV-chain $[0, 1]$, while in the case of modal logic it is the class of complex algebras of Kripke frames. This yields an algebra $\alg{A} \in \class{K}$ such that $\alg{A} \nvDash E(\varphi, \psi)$. We then find a valuation on $\alg{A}$ witnessing that $\alg{A} \nvDash E(\varphi', \psi')$ for some formulas $\varphi'$ and $\psi'$ with $\varphi' \vdash_{\logic{L}} \psi'$, which contradicts the standard EDC.

\begin{example} \label{example: luk does not have edc}
  {\L}ukasiewicz logic $\Luk$ does not have a standard EDC.
\end{example}

\begin{proof}
  Suppose for the sake of contradiction that $\Luk$ has a standard EDC, in particular there is a set of equations in two variables $E(x, y)$ such that $\varphi \vdash_{\Luk} \psi$ if and only if $\MV \vDash E(\varphi, \psi)$. Then $\MV \nvDash \varepsilon(1, 0)$ for some equation $\varepsilon \in E$, since $1 \nvdash_{\Luk} 0$. Because $\MV$ is generated as a variety by $[0, 1]$, it follows that $[0, 1] \nvDash \varepsilon(1, 0)$. Since each primitive operation of $[0, 1]$, hence also each term operation of $[0, 1]$, is continuous with respect to the standard Euclidean topology, the solution set $\set{\pair{a}{b} \in [0, 1]^{2}}{\MV \vDash \varepsilon(a, b)}$ is closed in this topology. Consequently, there is some $a < 1$ such that $[0, 1] \nvDash \varepsilon(a, 0)$. But then $a^{n} = 0$ for some $n \in \omega$, so $[0, 1] \nvDash \varepsilon (a, a^{n})$ and $\MV \nvDash E(x, x^{n})$, contradicting the fact that $x \vdash_{\Luk} x^{n}$.
\end{proof}

\begin{example} \label{example: global k does not have edc}
  The global modal logic $\GlobK$ does not have a standard EDC.
\end{example}

\begin{proof}
  Suppose for the sake of contradiction that $\GlobK$ has a standard EDC. In particular, there is a set of equations in two variables $E(x, y)$ such that $\varphi \vdash_{\GlobK} \psi$ if and only if $\BAO \vDash E(\varphi, \psi)$. Because $\Diamond 1 \nvdash_{\GlobK} 0$, there is some equation $\varepsilon \in E$ such that $\BAO \nvDash \varepsilon(\Diamond 1, 0)$. We may assume without loss of generality that $\varepsilon(x, y)$ has the form $\varphi(x, y) \equals 1$ for some formula in two variables $\varphi(x, y)$. Because $\GlobK$ is Kripke complete with respect to the class of trees (irreflexive directed trees where every edge points away from the root), there is a valuation $\Vdash$ on a tree $T$ with root $r$ such that $r \nVdash \varphi(\Diamond 1, 0)$. Since $\varphi(\Diamond 1, 0)$ does not contain any variables, $r \nVdash \varphi(\Diamond 1, 0)$ in each valuation $\Vdash$ on $T$.

  Now observe that $\varphi$ only contains nestings of modal operators of some limited depth, so whether $r \Vdash \varphi(\alpha, \beta)$ for a given valuation $\Vdash$ on $T$ only depends on the values of $\alpha$ and $\beta$ at worlds of limited height. More precisely, there is some $n \in \omega$ such that for each valuation $\Vdash$ on $T$ we have $r \nVdash \varphi(\alpha, \beta)$ whenever $w \Vdash \alpha \iff w \Vdash \Diamond 1$ and $w \Vdash \beta \iff w \Vdash 0$ for each world $w$ in $T$ of height at most $n$ (assigning height $0$ to the root $r$).

  Consider the valuation $\Vdash$ such that $w \Vdash x$ if and only if $w \Vdash \Diamond 1$ and moreover $w$ has height at most $n$. Then $r \nVdash \varphi(x, 0)$. Moreover, $w \nVdash \Box_{n+1} x$ for each $w$ of height at most $n$, since from each such world $w$ one can access in at most $n+1$ steps either a blind world (a world where $\Diamond 1$ fails) or a world of height strictly more than $n$. (Recall that $\Box_{0} x \assign x$ and $\Box_{i+1} x \assign x \wedge \Box \Box_{i} x$ for $i \in \omega$.) Thus $r \nVdash \varphi(x, \Box_{n+1} x)$. Switching back to algebraic semantics, this means that $\BAO \nvDash \varphi(x, \Box_{n+1} x) \equals 1$, so $\BAO \nvDash E(x, \Box_{n+1} x)$. But the EDC now implies that $x \nvdash_{\GlobK} \Box_{n+1} x$, whereas in fact $x \vdash_{\GlobK} \Box_{n+1} x$.
\end{proof}

  Finally, we construct a logic which has a standard EDC but not an EDCF. Consider a variety of algebras $\class{K}$ with a constant $1$ in their signature. Let $\logic{L}_{\class{K}}$ be the logic of all matrices of the form $\pair{\alg{A}}{F}$ such that $\alg{A} \in \class{K}$ and $1 \in F$. We have $\Alg \logic{L}_{\class{K}} = \class{K}$, since clearly $\class{K} \subseteq \Alg \logic{L}$ and conversely $\Alg \logic{L} \subseteq \class{K}$ by Fact~\ref{fact: alg l variety}. The logic $\logic{L}_{\class{K}}$ can be described more explicitly as follows:
\begin{align*}
  \gamma_{1}, \dots, \gamma_{n} \vdash_{\logic{L}} \varphi & \iff \class{K} \vDash \gamma_{i} \equals \varphi \text{ for some } i \in \{ 1, \dots, n \} \text{ or } \class{K} \vDash \varphi \equals 1.
\end{align*}
  In fact, this local EDC extends to a local EDCF: for each $\alg{A} \in \class{K}$
\begin{align*}
  b \in \Fg^{\alg{A}}_{\logic{L}_{\class{K}}} (a_{1}, \dots, a_{n}) \iff b = a_{i} \text{ for some } i \in \{ 1, \dots, n \} \text{ or } b = 1.
\end{align*}

  We now take a particular variety $\class{K}$ such that $\logic{L}_{\class{K}}$ has a global EDC but no global EDCF. Namely, the signature of $\class{K}$ shall consist of a constant $1$ and an $(n+1)$-ary function symbol $\Box_{n}$ for each $n \geq 1$, and $\class{K}$ shall be the variety in this signature axiomatized by the following sequence of equations for $k \geq 1$:
\begin{align*}
  \Box_{k}(x_{1}, \dots, x_{k}, x_{i}) \equals 1. \tag{$\alpha_{k}$}
\end{align*}

\begin{lemma}
  ~
\begin{enumerate}[(i)]
\item $\class{K} \vDash \Box_{m}(t_{1}, \dots, t_{m}, u) \equals 1$ \hskip -0.51pt if and only if $\class{K} \vDash t_{i} \equals u$ \hskip -0.51pt for some ${i \in \{ 1, \dots, m \}}$.
\item $\class{K} \vDash 1 \equals \Box_{m}(t_{1}, \dots, t_{m}, u)$ \hskip -0.51pt if and only if $\class{K} \vDash t_{i} \equals u$ \hskip -0.51pt for some ${i \in \{ 1, \dots, m \}}$.
\item $\class{K} \vDash \Box_{m}(t_{1}, \dots, t_{m}, u) \equals \Box_{n}(v_{1}, \dots, v_{n}, w)$ if and only if either (a) there are $i \in \{ 1, \dots, m \}$ and $j \in \{ 1, \dots, n \}$ such that $\class{K} \vDash t_{i} \equals u$ and $\class{K} \vDash v_{j} \equals w$, or (b) $m = n$ and $\class{K} \vDash t_{i} \equals v_{i}$ for each $i \in \{ 1, \dots, m \}$ and $\class{K} \vDash u \equals w$.
\end{enumerate}
\end{lemma}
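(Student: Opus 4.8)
The plan is to give an explicit description of the free algebra of $\class{K}$ on a set of generators $X$ by means of normal forms, and then to read off all three items by computing in this algebra. Recall that for terms $s, t$ we have $\class{K} \vDash s \equals t$ if and only if $s$ and $t$ have the same value in the free $\class{K}$-algebra on $X$ (taking $X$ to contain all variables of $s$ and $t$), so an explicit normal-form description of this free algebra amounts to a decision procedure for equality in $\class{K}$, of which the three items are special cases.

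First I would define the set $U$ of \emph{normal forms} by recursion on term complexity: the constant $1$ and every generator is a normal form, and $\Box_m(s_1, \dots, s_m, v)$ is a normal form exactly when $s_1, \dots, s_m, v$ are normal forms and $v$ is syntactically distinct from each of $s_1, \dots, s_m$. I then make $U$ into an algebra $\alg{F}$ by putting $1^{\alg{F}} \assign 1$ and letting $\Box_m^{\alg{F}}(s_1, \dots, s_m, v)$ be $1$ when $v = s_i$ for some $i \in \{ 1, \dots, m \}$ and be $\Box_m(s_1, \dots, s_m, v)$ otherwise. The definition of a normal form guarantees that the output again lies in $U$, so these operations are well defined. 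A one-line check gives $\alg{F} \in \class{K}$: in $\Box_k^{\alg{F}}(s_1, \dots, s_k, s_i)$ the last argument coincides with the $i$-th one, so the value is $1$, which is exactly the axiom $\alpha_k$.

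Next I would identify $\alg{F}$ with the free $\class{K}$-algebra by showing that the normalization map $N \colon \Fm_X \to \alg{F}$, the unique homomorphism fixing the generators, satisfies $\class{K} \vDash s \equals t$ if and only if $N(s) = N(t)$. For the forward direction one simply evaluates in $\alg{F}$ under the identity assignment: since $\alg{F} \in \class{K}$, every identity valid in $\class{K}$ holds in $\alg{F}$, so $N(s) = N(t)$. For the converse one observes that each reduction $\Box_m(s_1, \dots, s_m, s_i) \equals 1$ used in computing $N$ is an instance of $\alpha_m$ and hence $\class{K}$-valid, so $\class{K} \vDash s \equals N(s)$ for every term $s$, whence $N(s) = N(t)$ yields $\class{K} \vDash s \equals t$. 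This identification of the free algebra is the only conceptual step; everything else is bookkeeping.

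Finally I would read off the three items by computing $N$. Write $P$ for the condition that $\class{K} \vDash t_i \equals u$ for some $i$, and $Q$ for the condition that $\class{K} \vDash v_j \equals w$ for some $j$. By the definition of the operations, $N(\Box_m(t_1, \dots, t_m, u))$ equals $1$ precisely when $P$ holds, and is otherwise a normal form with head symbol $\Box_m$; likewise for the other side. Items (i) and (ii) are then immediate. For (iii) I split into cases: if $P$ and $Q$ both hold, both sides normalize to $1$, giving case (a); if neither holds, the two sides are normal forms with heads $\Box_m$ and $\Box_n$, syntactically equal exactly when $m = n$, $N(t_i) = N(v_i)$ for all $i$, and $N(u) = N(w)$, giving case (b); and if exactly one of $P, Q$ holds, one side is $1$ while the other has a $\Box$-head, so they differ. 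Since case (b) forces $P$ and $Q$ to be equivalent, the inclusive disjunction of (a) and (b) matches this analysis exactly. I expect this case analysis to be the bulk of the writing, but entirely routine once the free algebra is in hand.
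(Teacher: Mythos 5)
Your proposal is correct, but it takes a genuinely different route from the paper. The paper proves the lemma proof-theoretically: it fixes a Birkhoff-style equational calculus (Reflexivity, the axioms $(\alpha_{k})$ and their converses, Transitivity, Replacement), argues that Symmetry and Substitution are admissible, and then performs an induction over the depth of derivations, analysing for each of the three equality shapes which rule could have produced it; the bulk of the work is the case analysis for Transitivity. You instead construct the free $\class{K}$-algebra explicitly as an algebra of normal forms and reduce all three items to syntactic equality of normal forms. Your key steps all check out: the operations on $U$ are well defined because the non-collapsing case lands back in $U$ by the very definition of a normal form; $\alg{F} \in \class{K}$ is immediate; the two directions of $\class{K} \vDash s \equals t \iff N(s) = N(t)$ are exactly as you say (soundness of $\alg{F}$ for one direction, the $\class{K}$-validity of each collapse $\Box_{m}(s_{1}, \dots, s_{m}, s_{i}) \equals 1$ for the other); and your observation that case (b) forces $P \Leftrightarrow Q$ correctly reconciles the inclusive disjunction in the statement with your three-way case split. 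What your approach buys is uniformity and reusability: you get a decision procedure for the whole equational theory of $\class{K}$ and canonical representatives in the free algebra, from which items (i)--(iii) and any similar facts drop out without further induction, and you sidestep the delicate bookkeeping about which presentation of equational logic to use. The paper's approach is more elementary in that it needs no construction of a model, only manipulation of derivations, but it pays for this with a longer and more error-prone case analysis.
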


\begin{proof}
  The right-to-left implications are trivial consequences of the axioms $(\alpha_{k})$. We prove the left-to-right implications by induction over the depth of the equational proof of the equality. The equational calculus that we shall use is a variant of the calculus presented in~\cite[Definition~II.14.16]{burris+sankappanavar81}. Axioms of this calculus are either instances of Reflexivity, i.e.\ equalities of the form $t \equals t$ for some term $t$, or substitution instances of one of the axioms $(\alpha_{k})$, or the symmetric converses of such substitution instances, i.e.\ equalities of the form $1 \equals \Box_{k}(t_{1}, \dots, t_{n}, u)$. The calculus has two inference rules. Transitivity allows us to infer $t \equals v$ from $t \equals u$ and $u \equals v$. Replacement allows us to infer $t(p, y_{1}, \dots, y_{n}) \equals t(q, y_{1}, \dots, y_{n})$ from $p \equals q$ for each term $t(x, y_{1}, \dots, y_{n})$ in the variables $x, y_{1}, \dots, y_{n}$. We may assume that Replacement is only used non-trivially, i.e.\ that it is not applied to the terms $t \assign 1$ and $t \assign x$, where $x$ is a variable. (The reader can easily verify that the rules of Symmetry and Substitution in the calculus of~\cite{burris+sankappanavar81} are admissible in this variant of the calculus, in the sense that any equality provable using these rules can be proved without them.)

  We now proceed by induction over the depth of the equational proof of the three types of equalities. The equality $\Box_{m}(t_{1}, \dots, t_{m}, u) \equals 1$ is neither an instance of Reflexivity nor an instance of the symmetric converse of $(\alpha_{k})$ for any ${k \geq 1}$. If it is an instance of $(\alpha_{k})$ for some ${k \geq 1}$, then it must be an instance of $(\alpha_{m})$ and consequently $t_{i} = u$ for some $i \in \{ 1, \dots, m \}$ as required. The equality $\Box_{m}(t_{1}, \dots, t_{m}, u) \equals 1$ cannot be the result of a non-trivial application of Replacement.

  Suppose that $\Box_{m}(t_{1}, \dots, t_{m}, u) \equals 1$ is the result of an application of Transitivity with premises $\Box_{m}(t_{1}, \dots, t_{m}, u) \equals v$ and $v \equals 1$. If $v$ is the term $1$, then the claim holds by the inductive hypothesis. Otherwise, $v$ has the form $\Box_{n}(p_{1}, \dots, p_{n}, q)$. Then the inductive hypothesis applied to $\Box_{m}(t_{1}, \dots, t_{m}, u) \equals \Box_{n}(p_{1}, \dots, p_{n}, q)$ yields that either $\class{K} \vDash t_{i} \equals u$ for some $i \in \{ 1, \dots, m \}$ as required, or $m = n$ and $\class{K} \vDash t_{i} \equals p_{i}$ for each $i \in \{ 1, \dots, m \}$ and $\class{K} \vDash q \equals u$. But in the latter case the inductive hypothesis applied to $\Box_{n}(p_{1}, \dots, p_{n}, q) \equals 1$ yields $\class{K} \vDash p_{i} \equals q$ for some $i \in \{ 1, \dots, n \}$, so $\class{K} \vDash t_{i} \equals p_{i} \equals q \equals u$, as required.

  The equality $\Box_{m}(t_{1}, \dots, t_{m}, u) \equals \Box_{n}(p_{1},\dots, p_{n}, q)$ is only an instance of Reflexivity if $m = n$ and $t_{i} = p_{i}$ for $i \in \{ 1, \dots, n \}$ and $u = q$, as required. It is not an instance of $(\alpha_{k})$ or the symmetric converse of $(\alpha_{k})$. The equality can only be the result of a non-trivial application of Replacement if $m = n$ and $t_{j} \equals p_{j}$ is the result of Replacement for some $j \in \{ 1, \dots, n \}$ and $t_{i} = p$

  Suppose that $\Box_{m}(t_{1}, \dots, t_{m}, u) \equals \Box_{n}(p_{1},\dots, p_{n}, q)$ is the result of an application of Transitivity. If the premises are $\Box_{m}(t_{1}, \dots, t_{m}, u) \equals 1$ and $1 \equals \Box_{n}(p_{1},\dots, p_{n}, q)$, then by the inductive hypothesis $\class{K} \vDash t_{i} \equals u$ and $\class{K} \vDash p_{j} \equals q$ for some $i \in \{ 1, \dots, m \}$ and $j \in \{ 1, \dots, n \}$, as required. If the premises are $\Box_{m}(t_{1}, \dots, t_{m}, u) \equals \Box_{k}(r_{1},\dots, r_{k}, s)$ and $\Box_{k}(r_{1}, \dots, r_{k}, s) \equals \Box_{n}(p_{1}, \dots, p_{n}, q)$, then there are four cases:
\begin{enumerate}[(i)]
\item There are $\class{K} \vDash t_{i} \equals u$ and $\class{K} \vDash p_{j} \equals q$ for some $i \in \{ 1, \dots, m \}$and $j \in \{ 1,\dots, n \}$ as required.
\item We have $m = k$ and $\class{K} \vDash t_{i} \equals r_{i}$ for each $i \in \{ 1, \dots, m \}$ and $\class{K} \vDash u \equals s$, and moreover $\class{K} \vDash r_{j} \equals s$ and $\class{K} \vDash p_{l} \equals q$ for some $j \in \{ 1, \dots, k \}$ and $l \in \{ 1, \dots, n \}$. Then $\class{K} \vDash t_{j} \equals r_{j} \equals s \equals u$ and $\class{K} \vDash p_{l} \equals q$ as required.
\item We have $\class{K} \vDash t_{i} \equals u$ and $\class{K} \vDash r_{j} \equals s$ for some $i \in \{ 1, \dots, m \}$ and $j \in \{ 1, \dots, n \}$, and moreover $k = n$ and $\class{K} \vDash r_{j} \equals p_{j}$ for each $j \in \{ 1, \dots, k \}$ and $\class{K} \vDash s \equals q$. Then $\class{K} \vDash t_{i} \equals u$ and $\class{K} \vDash p_{j} \equals r_{j} \equals s \equals q$ as required.
\item We have $m = k = n$ and $\class{K} \vDash t_{i} \equals r_{i} \equals p_{i}$ for each $i \in \{ 1, \dots, m \}$ and $\class{K} \equals u \equals s \equals q$ as required, 
\end{enumerate}
  Finally, equalities of the form $1 \equals \Box_{m}(t_{1}, \dots, t_{m}, u)$ are dealt with in the same way as equalities of the form $\Box_{m}(t_{1}, \dots, t_{m}, u) \equals 1$.
\end{proof}

\begin{example} \label{example: edc without edcf}
  The logic $\logic{L}_{\class{K}}$ has a standard EDC with respect to $\class{K}$:
\begin{align*}
  \gamma_{1}, \dots, \gamma_{n} \vdash_{\logic{L}_{\class{K}}} \varphi & \iff \class{K} \vDash \Box_{n+1}(1, \gamma_{1}, \dots, \gamma_{n}, \varphi).
\end{align*}
  It has a local EDCF but no global EDCF.
\end{example}

\begin{proof}
  The equivalence follows immediately from the previous lemma. Clearly $\class{K} \vDash \varphi \equals \psi$ implies $\varphi \dashv \vdash_{\logic{L}_{\class{K}}} \psi$. Conversely, if $\varphi \vdash_{\logic{L}_{\class{K}}} \psi$ and $\psi \vdash_{\logic{L}_{\class{K}}} \varphi$, then either at least one of the rules holds by virtue of the equality $\class{K} \vDash \varphi \equals \psi$ or they hold by virtue of the equalities $\class{K} \vDash \psi \equals 1$ and $\class{K} \vDash \varphi \equals 1$, in which case we again obtain $\class{K} \vDash \varphi \equals \psi$. Thus $\varphi \dashv \vdash_{\logic{L}_{\class{K}}} \psi$ if and only if $\class{K} \vDash \varphi \equals \psi$. By~\cite[Proposition~7.8]{font16} it follows that $\class{K}$ is the intrinsic variety of $\logic{L}_{\class{K}}$, so $\logic{L}_{\class{K}}$ enjoys a standard EDC.

  We now show that $\logic{L}_{\class{K}}$ does not have a global EDCF. To this end, consider the algebra $\alg{A} \in \class{K}$ over the underlying set $\{ 0, 1, a_{1}, a_{2}, b \}$ such that $\Box_{n}(x_{1}, \dots, x_{n}, y) = 1$ if $y \in \{ x_{1}, \dots, x_{n} \}$ else $\Box_{n}(x_{1}, \dots, x_{n}, y) = 0$. We show that there is no smallest $\class{K}$-congruence $\theta$ on $\alg{A}$ such that $b / \theta \in \Fg^{\alg{A} / \theta}_{\logic{L}_{\class{K}}} (a_{1}, a_{2}) / \theta$. To see this, let $\theta_{1}$ and $\theta_{2}$ be the equivalence relations whose only non-trivial equivalence classes are $\{ 0, 1 \}$ and, respectively, $\{ a_{1}, b \}$ and $\{ a_{2}, b \}$. These are congruences, and thus $\class{K}$-congruences: to see this, it suffices to observe that replacing any argument of $\Box_{n}(x_{1}, \dots, x_{n}, y)$ by an equivalent one will at most result in changing the value from $1$ to $0$ or vice versa, and $\pair{0}{1} \in \theta_{1} \cap \theta_{2}$. But for $\theta \assign \theta_{1} \cap \theta_{2}$ it is not the case that $b / \theta \in \Fg^{\alg{A} / \theta}_{\logic{L}_{\class{K}}} (a_{1}, a_{2}) / \theta$.
\end{proof}


\begin{thebibliography}{10}

\bibitem{agliano+ursini97}
Paolo Agliano and Aldo Ursini.
\newblock On subtractive varieties {IV}: {Definability} of principal ideals.
\newblock {\em Algebra Universalis}, 38:355--389, 1997.

\bibitem{albuquerque+prenosil+rivieccio17}
Hugo Albuquerque, Adam P\v{r}enosil, and Umberto Rivieccio.
\newblock An algebraic view of super-{Belnap} logics.
\newblock {\em Studia Logica}, 105(6):1051--1086, 2017.

\bibitem{blackburn+derijke+venema01}
Patrick Blackburn, Maarten de~Rijke, and Yde Venema.
\newblock {\em Modal logic}.
\newblock Number~53 in Cambridge Tracts in Theoretical Computer Science.
  Cambridge University Press, 2001.

\bibitem{blok+pigozzi01}
Willem~J. Blok and Don Pigozzi.
\newblock Abstract algebraic logic and the deduction theorem.
\newblock Unpublished manuscript available at
  https://faculty.sites.iastate.edu/dpigozzi/files/inline-files/aaldedth.pdf,
  2001.

\bibitem{bonzio+paoli+prabaldi22}
Stefano Bonzio, Francesco Paoli, and Michele Pra~Baldi.
\newblock {\em Logics of variable inclusion}, volume~59 of {\em Trends in Logic
  - Studia Logica Library}.
\newblock Springer, 2022.

\bibitem{burris+sankappanavar81}
Stanley Burris and Hanamantagouda~P. Sankappanavar.
\newblock {\em A Course in Universal Algebra}, volume~78 of {\em Graduate Texts
  in Mathematics}.
\newblock Springer, 1981.

\bibitem{campercholi+vaggione16}
Miguel Campercholi and Diego Vaggione.
\newblock Semantical conditions for the definability of functions and
  relations.
\newblock {\em Algebra Universalis}, 76:71--98, 2016.

\bibitem{mvbook00}
Roberto~L.O. Cignoli, Itala~M.L. D'Ottaviano, and Daniele Mundici.
\newblock {\em Algebraic foundations of many-valued reasoning}.
\newblock Number~7 in Trends in Logic. Studia Logica Library. Springer, 2000.

\bibitem{czelakowski01}
Janusz Czelakowski.
\newblock {\em Protoalgebraic logics}, volume~10 of {\em Trends in Logic -
  Studia Logica Library}.
\newblock Springer, 2001.

\bibitem{font16}
Josep~Maria Font.
\newblock {\em Abstract Algebraic Logic -- An Introductory Textbook}, volume~60
  of {\em Studies in Logic}.
\newblock College Publications, London, 2016.

\bibitem{jansana06}
Ramon Jansana.
\newblock Selfextensional logics with a conjunction.
\newblock {\em Studia Logica}, 84:63--104, 2006.

\bibitem{moraschini22}
Tommaso Moraschini.
\newblock On equational completeness theorems.
\newblock {\em The Journal of Symbolic Logic}, 2022.

\bibitem{prenosil23}
Adam P\v{r}enosil.
\newblock The lattice of super-{Belnap} logics.
\newblock {\em The Review of Symbolic Logic}, 16(1):114--163, 2023.

\end{thebibliography}
\end{document}